\numberwithin{equation}{section}
\newtheorem{theorem}[equation]{Theorem}
\newtheorem{proposition}[equation]{Proposition}
\newtheorem{lemma}[equation]{Lemma}
\newtheorem{corollary}[equation]{Corollary}
\theoremstyle{definition}
\newtheorem{rmk}[equation]{Remark}
\newenvironment{remark}[1][]{\begin{rmk}[#1] \pushQED{\qed}}{\popQED \end{rmk}}
\newtheorem{eg}[equation]{Example}
\newenvironment{example}[1][]{\begin{eg}[#1] \pushQED{\qed}}{\popQED \end{eg}}
\newtheorem{defnaux}[equation]{Definition}
\newenvironment{definition}[1][]{\begin{defnaux}[#1]\pushQED{\qed}}{\popQED \end{defnaux}}
\newcommand{\cA}{\mathcal{A}}
\newcommand{\rA}{\mathrm{A}}
\newcommand{\cB}{\mathcal{B}}
\newcommand{\rB}{\mathrm{B}}
\newcommand{\bC}{\mathbf{C}}
\newcommand{\cC}{\mathcal{C}}
\newcommand{\sE}{\mathscr{E}}
\newcommand{\bF}{\mathbf{F}}
\newcommand{\cF}{\mathcal{F}}
\newcommand{\cP}{\mathcal{P}}
\newcommand{\bQ}{\mathbf{Q}}
\newcommand{\bS}{\mathbf{S}}
\newcommand{\fS}{\mathfrak{S}}
\newcommand{\bT}{\mathbf{T}}
\newcommand{\cU}{\mathcal{U}}
\newcommand{\sY}{\mathscr{Y}}
\newcommand{\bZ}{\mathbf{Z}}
\newcommand{\nlet}{%
  \mathrel{\ooalign{$\le$\cr\hidewidth$|$\hidewidth}}%
}
\newcommand{\arxiv}[1]{\href{http://arxiv.org/abs/#1}{{\tiny\tt arXiv:#1}}}
\newcommand{\DOI}[1]{\href{http://doi.org/#1}{\color{purple}{\tiny\tt DOI:#1}}}
\newcommand{\stacks}[1]{\cite[\href{http://stacks.math.columbia.edu/tag/#1}{Tag~#1}]{stacks}}
\newcommand{\defn}[1]{\emph{#1}}
\renewcommand{\AA}{\mathbb{A}}
\newcommand{\BB}{\mathbb{B}}
\renewcommand{\phi}{\varphi}
\renewcommand{\emptyset}{\varnothing}
\DeclareMathOperator{\im}{im} 
\DeclareMathOperator{\Sym}{Sym}
\DeclareMathOperator{\Aut}{Aut}
\DeclareMathOperator{\Hom}{Hom}
\DeclareMathOperator{\Rep}{Rep}
\newcommand{\id}{\mathrm{id}}
\newcommand{\op}{\mathrm{op}}
\DeclareMathOperator{\age}{age}
\DeclareMathOperator{\Perm}{Perm}
\DeclareMathOperator{\Eq}{Eq}
\newcommand{\orb}{\mathrm{orb}}
\newcommand{\bzero}{\mathbf{0}}
\newcommand{\bone}{\mathbf{1}}
\DeclareMathOperator{\Amalg}{Amalg}
\newcommand{\FinSet}{\mathbf{FinSet}}
\newcommand{\myuline}[1]{%
  \uline{\phantom{#1}}%
  \llap{\contour{white}{#1}}%
}
\DeclareMathOperator{\uRep}{\text{\myuline{\rm Rep}}}
\title{Pre-Galois categories and Fra\"iss\'e's theorem}
\author{Nate Harman}
\author{Andrew Snowden}
\date{December 7, 2023}
\begin{document}

\begin{abstract}
Galois categories can be viewed as the combinatorial analog of Tannakian categories. We introduce the notion of \defn{pre-Galois category}, which can be viewed as the combinatorial analog of pre-Tannakian categories. Given an oligomorphic group $G$, the category $\bS(G)$ of finitary smooth $G$-sets is pre-Galois. Our main theorem (approximately) says that these examples are exhaustive; this result is, in a sense, a reformulation of Fra\"iss\'e's theorem. We also introduce a more general class of \defn{$\rB$-categories}, and give some examples of $\rB$-categories that are not pre-Galois using permutation classes. This work is motivated by certain applications to pre-Tannakian categories.
\end{abstract}

\maketitle
\tableofcontents

\section{Introduction}

\subsection{Background}

The famous Tannakian reconstruction theorem says that an algebraic group can be recovered from its representation category. To be a bit more precise, fix an algebraically closed field $k$. A \defn{pre-Tannakian category} is a $k$-linear abelian category equipped with a symmetric tensor structure satisfying some axioms. A \defn{Tannakian category} is a pre-Tannakian category $\cC$ equipped with a \defn{fiber functor} $\omega$, i.e., a faithful exact tensor functor to finite dimensional vector spaces. The motivating example of a Tannakian category is the category $\Rep_k(G)$ of finite dimensional representations of an algebraic group $G/k$; the fiber functor is simply the forgetful functor. The main theorem of Tannakian categories states these examples are essentially exhaustive: if $\cC$ is a Tannakian category then $\cC$ is equivalent to $\Rep_k(G)$, where $G$ is the (pro-algebraic) automorphism group of $\omega$. See \cite{DeligneMilne} for details.

It is not so easy to construct pre-Tannakian categories that are not (super-)Tannakian, but a number of interesting examples are known, including Deligne's interpolation categories \cite{Deligne3}, the Delannoy category \cite{line}, and the Verlinde category \cite{Ostrik}. A major problem in the field of tensor categories is to better understand the pre-Tannakian landscape.

There is a combinatorial analog of Tannakian reconstruction, in the form of Grothendieck's Galois theory. A \defn{Galois category} is a category $\cC$ equipped with a functor $\omega$ to finite sets satisfying certain axioms (see Definition~\ref{def:galois}). The motivating example of a Galois category is the category of finite $G$-sets, for a group $G$. The main theorem of Galois categories states that these examples are essentially exhaustive: if $\cC$ is a Galois category then $\cC$ is equivalent to the category of smooth (=discrete) $G$-sets, where $G$ is the (profinite) automorphism group of $\omega$. Grothendieck applied this theorem to construct the \'etale fundamental group.

Conspicuously absent from the combinatorial side is an analog of pre-Tannakian categories. The purpose of this paper is to fill this gap: we define this class of categories, prove one main theorem about them, and construct some interesting examples.

\subsection{Pre-Galois categories}

The following definition introduces our combinatorial analog of pre-Tannakian categories:

\begin{definition} \label{defn:galois-intro}
An essentially small category $\cB$ is \defn{pre-Galois} if the following conditions hold:
\begin{enumerate}
\item $\cB$ has finite co-products (and thus an initial object $\bzero$).
\item Every object of $\cB$ is isomorphic to a finite co-product of \defn{atoms}, i.e., objects that do not decompose under co-product.
\item If $X$ is an atom and $Y$ and $Z$ are other objects, then any map $X \to Y \amalg Z$ factors uniquely through $Y$ or $Z$.
\item $\cB$ has fiber products and a final object $\bone$.
\item Any monomorphism of atoms is an isomorphism.
\item If $X \to Z$ and $Y \to Z$ are maps of atoms then $X \times_Z Y$ is non-empty (i.e., not $\bzero$).
\item The final object $\bone$ is atomic.
\item Equivalence relations in $\cB$ are effective (see Definition~\ref{defn:eff-eq}). \qedhere
\end{enumerate}
\end{definition}

The above axioms are motivated by properties of the category of finite $G$-sets, for a group $G$. ``Atoms'' should be thought of as transitive $G$-sets. The first three axioms basically say that objects admit a finite ``orbit decomposition'' which behaves in the expected manner.

We define a \defn{$\rB$-category} to be one satisfying axioms (a)--(e). This turns out to be a very nice class of categories already. For example, we show that every $\rB$-category is balanced (Corollary~\ref{cor:balanced}) and has finite Hom sets (Proposition~\ref{prop:Hom-finite}). Axiom (e) is somewhat subtle, but these nice properties of $\rB$-categories depend on it.

Of the remaining three axioms, (f) is clearly the most important: in a sense, it is easy to explain all failures of (g) and (h), but this is not the case for (f). We say that a $\rB$-category is \defn{non-degenerate} if it satisfies (f) and (g). Non-degeneracy implies a number of nice properties, such as existence of co-equalizers. Axiom (h) ensures that quotients are well-behaved.

One can match properties of pre-Galois categories and pre-Tannakian categories, to some extent. Axiom (a) corresponds to additivity on the pre-Tannakian side. Both pre-Galois and pre-Tannakian categories are finitely complete and co-complete. Axiom (h) corresponds to the first isomorphism theorem on the pre-Tannakian side.  Axiom (b) corresponds to the finite length condition on the pre-Tannakian side. The co-product and product in a pre-Galois category correspond to the direct sum and tensor product in a pre-Tannakian category. Axiom (g) corresponds to the pre-Tannakian axiom that the unit object is simple. Finally, (f) corresponds to the fact that in a pre-Tannkain category the tensor product of non-zero objects is non-zero.

\subsection{Examples}

For any group $G$, the category of finite $G$-sets is a pre-Galois category, and this is the motivating example. One might try to construct other examples by considering (possibly infinite) $G$-sets with finitely many orbits. This does not work in general since a product of two such $G$-sets need not have finitely many orbits. For instance, $G$ acting on itself by left multiplication has one orbit, but the orbits of $G$ on $G \times G$ are in bijection with $G$ itself.

It turns out that the above idea can be made to work in at least one situation, however. Recall that an \defn{oligomorphic group} is a permutation group $(G, \Omega)$ such that $G$ has finitely many orbits on $\Omega^n$ for all $n \ge 0$. The simplest example of an oligomorphic group is the infinite symmetric group. Model theory, and the theory of Fra\"iss\'e limits in particular, provides many more examples. See \cite{CameronBook} for general background. Given an oligomorphic group $G$, we define $\bS(G)$ to be the category of sets equipped with an action of $G$ that is smooth (every stabilizer is open in the natural topology) and which has finitely many orbits. It turns out that this category \emph{is} closed under products; this is a consequence of the oligomorphic condition. It is not hard to show that $\bS(G)$ is in fact pre-Galois.

The above examples admit a mild generalization: we define a class of topological groups called \defn{admissible groups}, which include profinite groups and oligomorphic groups, and we associate a pre-Galois category $\bS(G)$ to such $G$. From the topological perspective, the key finiteness property of $G$ is Roelcke pre-compactness. We review this theory in \S \ref{s:olig}; a more detailed treatment can be found in \cite[\S 2]{repst}.

In Example~\ref{ex:separable} we give a non-trivial example of a degenerate $\rB$-category using a non-Fra\"iss\'e class of relational structures. It would be interesting if one could give a more direct construction of such an example.

\subsection{The main theorem}

The following is our main result on pre-Galois categories.

\begin{theorem}[Theorem~\ref{thm:uncountable}] \label{mainthm}
If $\cB$ is a pre-Galois category then $\cB$ is equivalent to $\bS(G)$ for some admissible group $G$.
\end{theorem}

In fact, we prove a bit more than this: Theorem~\ref{thm:fraisseB} accommodates all (countable) non-degenerate $\rB$-categories; in other words, we still obtain a classification result when we do not impose Definition~\ref{defn:galois-intro}(h). The non-degeneracy condition seems essential, however.

\begin{remark}
The group $G$ in Theorem~\ref{mainthm} is not unique. There are essentially two reasons for this:
\begin{enumerate}
\item One can always replace $G$ with a dense subgroup without changing the category $\bS(G)$. For example, if $G$ is the infinite symmetric group (all permutations of $\{1,2,\ldots\}$) and $G'$ is the finitary version (the union of the $\fS_n$) then $\bS(G) \cong \bS(G')$.
\item There are admissible groups of different ``sizes'' that are essentially the same, and have the same $\bS$ category. For example, if $G$ and $G'$ are the groups of all permutations on two infinite sets of different cardinalities then $\bS(G) \cong \bS(G')$.
\end{enumerate}
It is possible to eliminate the first issue by working with complete groups. See \cite[\S 7]{pro-etale} for a discussion of completeness. In general, there does not seem to be a good way to deal with the second issue. However, if $\cB$ has countably many isomorphism classes then one can take $G$ to be first-countable, and this does deal with the issue: in this case, there is a unique (up to isomorphism) complete first-countable $G$.
\end{remark}

\subsection{Overview of proof}

Let $\cB$ be a $\rB$-category, and let $\cA$ be the full subcategory of $\cB^{\op}$ spanned by atoms. We show that $\cB$ can be recovered from $\cA$, and exactly characterize the categories $\cA$ that arise in this manner (we call them $\rA$-categories). The key point in the proof of Theorem~\ref{mainthm} is that $\cA$ is a \defn{Fra\"iss\'e category}, meaning it is the kind of category to which the categorical version of Fra\"iss\'e's theorem applies. This theorem produces a universal homogeneous ind-object $\Omega$ in $\cA$. We show that $G=\Aut(\Omega)$ is naturally an admissible group, and that $\cB$ is equivalent to $\bS(G)$.

Actually, there is a technical detail elided above: we only show that $\cA$ is a Fra\"iss\'e category under the condition that it has countably many isomorphism classes. We first prove the theorem under this hypothesis, and then deduce the general case from this special case.

The correspondence between $\rA$- and $\rB$-categories is also useful for producing examples of $\rB$-categories: indeed, it is easy to construct $\rA$-categories by taking classes of relational structures, and one can then convert them to $\rB$-categories. We follow this plan in \S \ref{s:examples}.

\subsection{Motivation}

We now say a few words about why we wanted to develop this theory.

(a) In a recent paper \cite{repst}, we constructed a pre-Tannakian category $\uRep_k(G,\mu)$ associated to an oligomorphic group $G$ equipped with a measure $\mu$ (in a sense that we introduced), satisfying certain conditions. Our construction recovers Deligne's interpolation categories in certain cases, and leads to new categories (like the Delannoy category) in other cases. Some constructions and results in \cite{repst} hold for more general $\rB$-categories, and this was our original motivation for developing the theory.

(b) One might expect pre-Galois categories to be easier to understand than pre-Tannakian categories, and yet behave similarly in some ways. Here is one example of such a parallel. Deligne \cite{Deligne2} proved that a pre-Tannakian category (in characteristic~0) is Tannakian if and only if it is of moderate growth, meaning the length of $X^{\otimes n}$ grows at most exponentially with $n$ for each object $X$. This is an important theorem since it intrinsically characterizes Tannakian categories. Using our Theorem~\ref{mainthm}, one can prove an analogous result here: a pre-Galois category is Galois if and only if it is of moderate growth, meaning $X^n$ has at most exponentially summands for each $X$.

(c) Since oligomorphic groups are rather foreign to the field of tensor categories, our constructions from \cite{repst} may seem to come out of left field. This paper shows that the connection is in fact inevitable: pre-Galois categories are a natural combinatorial analog of pre-Tannakian categories, and they are intrinsically linked to oligomorphic groups. In forthcoming work \cite{frobalg}, we draw an even more direct connection: using Theorem~\ref{mainthm}, we show that every discrete pre-Tannakian category comes from an oligomorphic group. (``Discrete'' means that the category is generated by its \'etale algebras.)

%

\subsection{Outline}

In \S \ref{s:olig}, we review oligomorphic and admissible groups and the associated categories $\bS(G)$; these are the motivating examples of pre-Galois categories. In \S \ref{s:Bcat}, we define $\rB$-categories and establish some of their basic properties. In \S \ref{s:pre-galois}, we introduce pre-Galois categories, and establish some of their special features. In \S \ref{s:Acat}, we study the category of atoms in a $\rB$-category, which leads to the notion of $\rA$-category. In \S \ref{s:fraisse}, we review Fra\"iss\'e theory and prove our main theorem. Finally, in \S \ref{s:examples}, we give some examples of $\rA$- and $\rB$-categories coming from relational structures.

\subsection{Notation}

We list some of the important notation here:
\begin{description}[align=right,labelwidth=2.5cm,leftmargin=!]
\item[ $\bzero$ ] the initial object of a $\rB$-category (e.g., the empty set)
\item[ $\bone$ ] the final object of a $\rB$-category (e.g., the one-point set)
\item[ $\bS(G)$ ] the category of finitary (and smooth) $G$-sets
\item[ $\bT(G)$ ] the category of transitive (and smooth) $G$-sets
\item[ $\AA(\cB)$ ] the A-category associated to $\cB$ (see \S \ref{ss:AA})
\item[ $\BB(\cA)$ ] the B-category associated to $\cA$ (see \S \ref{ss:AA})
\end{description}

\section{Oligomorphic groups} \label{s:olig}

In this section, we review oligomorphic and admissible groups, and recall the category $\bS(G)$ of finitary $G$-sets. These categories are the motivation for the general notion of pre-Galois category we study in this paper.

\subsection{Oligomorphic groups}

An \defn{oligomorphic group} is a permutation group $(G, \Omega)$ such that $G$ has finitely many orbits on $\Omega^n$ for all $n \ge 0$. Here are a few concrete examples:
\begin{itemize}
\item The infinite symmetric group $\fS$, i.e., the group of all permutations of $\Omega=\{1,2,\ldots\}$.
\item The infinite general linear group over a finite field $\bF$, i.e., the group of all linear automorphisms of $\bF^{\oplus \infty}$.
\item The group of all order-preserving self-bijections of $\bQ$.
\end{itemize}
Many more examples can be obtained from Fra\"iss\'e limits. For example, if $R$ is the Rado graph (which is the Fra\"iss\'e limit of all finite graphs) then $\Aut(R)$ acts oligomorphically on the vertex set of $R$. We refer to Cameron's book \cite{CameronBook} for general background on oligomorphic groups.

\subsection{Admissible groups}

Fix an oligomorphic group $(G, \Omega)$. For a finite subset $A$ of $\Omega$, let $G(A)$ be the subgroup of $G$ fixing each element of $A$. The groups $G(A)$ form a neighborhood basis of the identity for a topology on $G$. This topology has the following properties \cite[\S 2.2]{repst}:
\begin{itemize}
\item It is Hausdorff.
\item It is non-archimedean: open subgroups form a neighborhood basis of the identity.
\item It is Roelcke pre-compact: if $U$ and $V$ are open subgroups then $V \backslash G/U$ is finite.
\end{itemize}
We define an \defn{admissible group} to be a topological group with these three properties. Thus every oligomorphic group gives rise to an admissible group. We also note that any finite group is admissible (with the discrete topology), and any profinite group is admissible. While we are most interested in oligomorphic groups, we typically will not have a preferred permutation action, and so it is most natural to work with admissible groups.

\subsection{Actions}

Let $G$ be an admissible group. We say that an action of $G$ on a set $X$ is \defn{smooth} if all stabilizers are open. We use the term ``$G$-set'' to mean ``set equipped with a smooth action of $G$.'' We say that a $G$-set is \defn{finitary} if it has finitely many orbits. We write $\bS(G)$ for the category of finitary $G$-sets (with morphisms being $G$-equivariant maps), and $\bT(G)$ for the full subcategory on the transitive $G$-sets. An important property of $\bS(G)$ is that it is closed under products and fiber products; see \cite[\S 2.3]{repst}.

%


There is a variant of the category $\bS(G)$ that will play an important role. A \defn{stabilizer class} in $G$ is a collection $\sE$ of open subgroups of $G$ satisfying the following conditions:
\begin{enumerate}
\item $\sE$ contains $G$.
\item $\sE$ is closed under conjugation.
\item $\sE$ is closed under finite intersections.
\item $\sE$ forms a neighborhood basis for the identity of $G$.
\end{enumerate}
We say that a $G$-set is \defn{$\sE$-smooth} if its stabilizers all belong to $\sE$. We let $\bS(G; \sE)$ be the full subcategory of $\bS(G)$ spanned by the $\sE$-smooth sets, and analogously define $\bT(G; \sE)$. The category $\bS(G; \sE)$ is also closed under products and fiber products.

\begin{example} \label{ex:sym-stab-class}
Let $\fS$ be the infinite symmetric group, let $\fS(n) \subset \fS$ be the subgroup fixing each of $1, \ldots, n$, and let $\fS_n$ be the symmetric group on $n$ letters. Let $\sE$ be the set of all subgroups of $\fS$ conjugate to some $\fS(n)$, and let $\sY$ be the set of all subgroups of $\fS$ conjugate to one of the form $\fS_{m_1} \times \cdots \fS_{m_r} \times \fS(n)$, where $m_1+\cdots+m_r=n$. Then $\sE$ and $\sY$ are stabilizer classes in $\fS$.
\end{example}

%
%
%
%
%

\section{Combinatorial tensor categories} \label{s:Bcat}

In this section, we introduce the class of $\rB$-categories, which we view as combinatorial analogs of tensor categories. All categories in this section are essentially small.

\subsection{Basic definitions}

Let $\cB$ be a category with finite co-products. We write $\bzero$ for the initial object and refer to it (or any object isomorphic to it) as \defn{empty}. We say that an object $X$ is \defn{atomic}, or an \defn{atom}, if it is non-empty and does not decompose non-trivially under co-product; that is, given an isomorphism $X \cong Y \amalg Z$ either $Y$ or $Z$ is empty.

We now introduce our combinatorial analog of tensor categories.

\begin{definition} \label{defn:B}
A \defn{$\rB$-category} is an essentially small category $\cB$ satisfying the following conditions:
\begin{enumerate}
\item $\cB$ has finite co-products.
\item Every object of $\cB$ is isomorphic to a finite co-product of atoms.
\item Given objects $X$, $Y$, and $Z$, with $X$ atomic, the natural map
\begin{displaymath}
\Hom(X, Y) \amalg \Hom(X, Z) \to \Hom(X, Y \amalg Z)
\end{displaymath}
is a bijection.
\item $\cB$ has fiber products and a final object $\bone$.
\item Any monomorphism of atoms is an isomorphism.
\end{enumerate}
We also define a \defn{$\rB_0$-category} to be an essentially small category satisfying (a)--(c), and a \defn{$\rB_1$-category} to be one satisfying (a)--(d).
\end{definition}

The following proposition establishes the motivating example.

\begin{proposition}
Let $G$ be an admissible group and let $\sE$ be a stabilizer class. Then the category $\bS(G; \sE)$ is a $\rB$-category.
\end{proposition}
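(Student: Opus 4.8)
The plan is to verify each of the five axioms (a)--(e) of Definition~\ref{defn:B} for the category $\bS(G;\sE)$, relying on the basic structure theory of finitary smooth $G$-sets. The starting point is the standard orbit decomposition: every $G$-set in $\bS(G;\sE)$ is a disjoint union of its $G$-orbits, and since the action is finitary, this is a \emph{finite} disjoint union. Each orbit is a transitive $G$-set, hence of the form $G/U$ for an open subgroup $U$; $\sE$-smoothness means precisely that we may take $U \in \sE$. Observe that the coproduct in $\bS(G;\sE)$ is disjoint union and the initial object $\bzero$ is the empty set, so ``atom'' will coincide with ``transitive (=single-orbit) $G$-set.'' The first task is to confirm this identification of atoms, which is immediate: a transitive $G$-set cannot be written as a disjoint union of two nonempty $G$-stable subsets, and conversely any non-transitive nonempty $G$-set splits off a proper orbit.

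With atoms identified as transitive objects, axioms (a) and (b) are then essentially the orbit decomposition together with the observation that $\bS(G;\sE)$ is closed under finite disjoint unions (the stabilizers of points in a disjoint union are the same as before, so they still lie in $\sE$). For axiom (c), I would argue that a $G$-equivariant map out of a transitive $G$-set $X = G/U$ is determined by the image of the base point, and that image must lie in a single orbit of the target; since $Y \amalg Z$ is a disjoint union of $G$-stable pieces $Y$ and $Z$, the image lands entirely in $Y$ or entirely in $Z$. This gives both injectivity and surjectivity of the natural map $\Hom(X,Y)\amalg\Hom(X,Z)\to\Hom(X,Y\amalg Z)$, and disjointness of the two summands on the left follows because a map cannot simultaneously factor through both. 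Axiom (d), existence of fiber products and a final object, is handled by the cited closure property: the final object is the one-point set $\bone = G/G$ (note $G \in \sE$ by the first stabilizer-class axiom, so $\bone$ lies in $\bS(G;\sE)$), and $\bS(G;\sE)$ is closed under fiber products by the remark following the definition of stabilizer class (and \cite[\S 2.3]{repst}). I should double-check that the relevant stabilizers of a fiber product $X\times_Z Y$ again lie in $\sE$: a point of the fiber product is a pair with equal image, and its stabilizer is an intersection of (conjugates of) stabilizers from $X$ and $Y$, which lies in $\sE$ by closure under conjugation and finite intersection.

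The one axiom requiring genuine (though still short) argument is (e): any monomorphism of atoms is an isomorphism. Here the atoms are transitive $G$-sets, say $f\colon G/U \to G/V$ a $G$-equivariant map. Such a map is automatically surjective, since its image is a nonempty $G$-stable subset of the transitive set $G/V$, hence all of $G/V$. Thus I only need that an injective equivariant map between transitive sets is already bijective; equivalently, that a surjective equivariant map $G/U\to G/V$ which is also a monomorphism must be injective. The cleanest route is to note that a map of transitive $G$-sets is a monomorphism in $\bS(G;\sE)$ if and only if it is injective as a map of sets, and then to observe that a surjection of transitive $G$-sets has constant fiber size $[V:U]$ (when $U$ is conjugated inside $V$), so injectivity forces this index to be $1$, giving an isomorphism. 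The main obstacle --- really the only subtle point --- is justifying that categorical monomorphism coincides with set-theoretic injectivity in $\bS(G;\sE)$, since $\bS(G;\sE)$ is a full subcategory and one must ensure there are enough test objects inside the subcategory to detect non-injectivity. This is where I would use that $\sE$ forms a neighborhood basis for the identity: given two distinct points of $G/U$ with the same image, their stabilizers contain some member of $\sE$ small enough to realize a pair of distinct maps from a suitable atom $G/W$ (with $W\in\sE$) equalized by $f$, contradicting the mono property. Assembling these observations yields all five axioms and establishes that $\bS(G;\sE)$ is a $\rB$-category.
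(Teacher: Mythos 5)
Your proposal is correct, and for axioms (a)--(d) it follows the paper's proof essentially verbatim: coproduct is disjoint union, atoms are the transitive $\sE$-smooth $G$-sets, equivariance plus transitivity forces a map out of an atom to land in a single summand, and the final object and fiber products are the set-theoretic ones (with $G\in\sE$ and closure of $\sE$ under conjugation and finite intersection guaranteeing $\sE$-smoothness). The one place you diverge is the key step of axiom (e), namely showing that a categorical monomorphism of atoms is injective on underlying sets. The paper does this by reusing what was just established in (d): $f$ is a monomorphism if and only if the projection $X\times_Y X\to X$ is an isomorphism, and since the fiber product in $\bS(G;\sE)$ is computed on underlying sets, this immediately says the set-theoretic kernel pair is trivial, i.e.\ $f$ is injective. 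You instead build test objects inside the subcategory: given distinct points $x_1,x_2$ with the same image, you produce two distinct maps $G/W\rightrightarrows G/U$ equalized by $f$ for a suitable $W\in\sE$. This works, and in fact you can take $W=G_{x_1}\cap G_{x_2}$ directly, which lies in $\sE$ by closure under conjugation and finite intersection --- you do not even need the neighborhood-basis axiom you invoke. Your route is slightly more hands-on and makes explicit why the full subcategory has enough objects to detect non-injectivity, which is the genuine subtlety you correctly flagged; the paper's kernel-pair argument sidesteps that concern entirely by leaning on the already-verified description of fiber products. Both are valid; the paper's is shorter, yours is more self-contained about where the stabilizer-class axioms enter.
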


\begin{proof}
(a) The co-product is given by disjoint union.

(b) Atoms are transitive $\sE$-smooth $G$-sets. Every finitary $\sE$-smooth $G$-set is clearly a finite disjoint union of transitive $\sE$-smooth $G$-sets.

(c) Suppose $X$ is an atom and $Y$ and $Z$ are arbitrary objects of $\bS(G; \sE)$. Let $f \colon X \to Y \amalg Z$ be a map. If any point of $X$ maps into $Y$ (or $Z$) then all of $X$ maps into $Y$ (or $Z$) since the map is $G$-equivariant and $G$ acts transitively on $X$. Thus axiom (c) holds.

(d) The ordinary fiber product of sets is the fiber product in $\bS(G; \sE)$. The final object is the one-point $G$-set (which is $\sE$-smooth since $\sE$ is required to contain $G$).

(e) Suppose $f \colon X \to Y$ is a monomorphism of atoms in $\bS(G; \sE)$. As in any category with fiber products, this implies that the projection map $X \times_Y X \to X$ is an isomorphism. Since the set underlying $X \times_Y X$ is just the usual fiber product of sets, we see that $f$ is an injective function. Since $f$ is an injective map of transitive $G$-sets, it is bijective, and thus an isomorphism in the category.
\end{proof}

\begin{remark}
We mention a few simple ways of producing new $\rB$-categories.
\begin{enumerate}
\item Let $\cB$ be a B-category and let $X$ be an object of $B$. Let $\Sigma$ be the class of all atomic objects appearing as a summand of $X^n$ for some $n$. Let $\cB'$ be the full subcategory of $\cB$ spanned by objects that are co-products of objects in $\Sigma$. Then $\cB'$ is a B-category; we call this the subcategory \defn{generated} by $X$.
\item Let $\cB$ be a B-category and let $S$ be an object of $\cB$. Then the category $\cB_{/S}$ of objects over $S$ is a B-category. If $\cB=\bS(G)$ and $S=G/U$ for an open subgroup $U$ then $\cB_{/S}=\bS(U)$.
\item Suppose $\cB_1$ and $\cB_2$ are B-categories. Then the product category $\cB_1 \boxplus \cB_2$ is a B-category; we call it the \defn{sum} category.
\item Let $\cB$ be a $\rB$-category and let $\bone=S_1 \amalg \cdots \amalg S_n$ be the atomic decomposition of the final object. Then $\cB$ is naturally equivalent to $\cB_{/S_1} \boxplus \cdots \boxplus \cB_{/S_n}$, and each $\cB_{/S_i}$ has an atomic final object. \qedhere
\end{enumerate}
\end{remark}

\subsection{Properties of $\rB_0$-categories} \label{ss:B0-prop}

Although we are mostly interested in $\rB$-categories, some results hold in greater generality, and this additional generality is useful in later proofs. In this spirit, we now prove some basic results about $\rB_0$-categories. We fix a $\rB_0$-category $\cB$ for \S \ref{ss:B0-prop}.

\begin{proposition} \label{prop:map-to-empty}
If $X$ is non-empty then there are no maps $X \to \bzero$.
\end{proposition}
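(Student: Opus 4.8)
The plan is to first reduce to the case that $X$ is an atom, and then exploit axiom (c) together with the degeneracy $\bzero \amalg \bzero \cong \bzero$.

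For the reduction, I would use axiom (b) to write a non-empty $X$ as a finite co-product $A_1 \amalg \cdots \amalg A_k$ of atoms with $k \geq 1$. If there were a map $f \colon X \to \bzero$, then pre-composing with the co-product inclusion $A_1 \to X$ would produce a map $A_1 \to \bzero$. Hence it suffices to show that an atom $A$ admits no map to $\bzero$.

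The key observation is that $\bzero \amalg \bzero$ is again an initial object (the co-product of two initial objects is initial), so the canonical map $\bzero \amalg \bzero \to \bzero$ is an isomorphism, and both co-product inclusions $\bzero \to \bzero \amalg \bzero$ become the identity of $\bzero$ after composing with it. Now I would apply axiom (c) with the atom $A$ in the first slot and $Y = Z = \bzero$, so that the natural map $\Hom(A,\bzero) \amalg \Hom(A,\bzero) \to \Hom(A, \bzero \amalg \bzero)$ is a bijection. Writing $S = \Hom(A,\bzero)$ and identifying $\Hom(A, \bzero \amalg \bzero)$ with $S$ via the isomorphism above, this natural map becomes the fold map $S \amalg S \to S$, since both co-product inclusions induce the identity on $S$.

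Finally, I would conclude by noting that the fold map $S \amalg S \to S$ is injective precisely when $S$ is empty: any element of a non-empty $S$ would have two distinct preimages. Since the fold map is a bijection by axiom (c), we must have $S = \emptyset$, i.e.\ there is no map $A \to \bzero$, completing the argument. The one point to get right — and the main potential obstacle — is the identification of the two co-product inclusions $\bzero \to \bzero \amalg \bzero$ as the same map after the isomorphism $\bzero \amalg \bzero \cong \bzero$; this is what collapses the natural map into the fold map. I would deliberately avoid a cardinality count of the form $2\lvert S\rvert = \lvert S\rvert$, which is valid only for finite $\Hom$-sets, in favour of this injectivity argument, which requires no finiteness hypothesis.
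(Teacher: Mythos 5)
Your proposal is correct and follows essentially the same route as the paper: reduce to the atomic case, apply axiom (c) with $Y=Z=\bzero$, identify $\Hom(X,\bzero\amalg\bzero)$ with $\Hom(X,\bzero)$, and observe that the resulting fold map $S\amalg S\to S$ can only be a bijection when $S$ is empty. The paper states this more tersely but the underlying argument, including the implicit injectivity observation, is identical.
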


\begin{proof}
It suffices to treat where $X$ is atomic, so we assume this. By Definition~\ref{defn:B}(c) the natural map
\begin{displaymath}
\Hom(X, \bzero) \amalg \Hom(X, \bzero) \to \Hom(X, \bzero \amalg \bzero)=\Hom(X, \bzero)
\end{displaymath}
is bijective, and so $\Hom(X, \bzero)=\emptyset$ as required.
\end{proof}

\begin{proposition} \label{prop:Bmap}
Let $f \colon X \to Y$ be a morphism. Write $X=X_1 \amalg \cdots \amalg X_n$ and $Y=Y_1 \amalg \cdots \amalg Y_m$ where each $X_i$ and $Y_i$ is atomic. There exists a unique function $a \colon [n] \to [m]$ such that the restriction of $f$ to $X_i$ factors uniquely through $Y_{a(i)}$; let $f_i \colon X_i \to Y_{a(i)}$ be the induced map. Then $f$ is uniquely determined by $a$ and the $f_i$'s. Moreover, every choice of $a$ and the $f_i$'s comes from some $f$.
\end{proposition}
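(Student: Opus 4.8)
The plan is to compute the set $\Hom(X,Y)$ by decomposing it separately in the source and target variables, and then to match the resulting data with the pair $(a,(f_i))$. The two ingredients are the universal property of the co-product (for the source) and Definition~\ref{defn:B}(c) (for the target).

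First I would handle the source. Since $X = X_1 \amalg \cdots \amalg X_n$, the universal property of the co-product identifies a morphism $f \colon X \to Y$ with the tuple of its restrictions $f \circ \iota_i \colon X_i \to Y$, where $\iota_i \colon X_i \to X$ is the canonical inclusion; that is, the natural map
\[
\Hom(X, Y) \to \prod_{i=1}^n \Hom(X_i, Y), \qquad f \mapsto (f \circ \iota_i)_{i},
\]
is a bijection. This already shows that $f$ is both determined by, and reconstructible from, its restrictions to the summands $X_i$, so it remains only to analyze each map $X_i \to Y$.

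Next I would handle the target using Definition~\ref{defn:B}(c). Since each $X_i$ is atomic, a straightforward induction on $m$ upgrades the two-summand statement of (c) to a bijection
\[
\coprod_{j=1}^m \Hom(X_i, Y_j) \to \Hom(X_i, Y),
\]
where at each step I write $Y = Y_1 \amalg (Y_2 \amalg \cdots \amalg Y_m)$ and apply (c). Concretely, this says that any morphism $X_i \to Y$ factors through the inclusion of exactly one summand $Y_j$, and that the factoring map $X_i \to Y_j$ is unique. Applying this to $f \circ \iota_i$ produces the index $a(i) \in [m]$ together with the map $f_i \colon X_i \to Y_{a(i)}$; the uniqueness of the summand yields both the uniqueness of $f_i$ and the uniqueness of the function $a$.

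Finally I would assemble the two steps. Composing the two displayed bijections gives a canonical bijection
\[
\Hom(X, Y) \to \prod_{i=1}^n \coprod_{j=1}^m \Hom(X_i, Y_j),
\]
and an element of the right-hand side is precisely a choice, for each $i$, of an index $a(i)$ together with a morphism $f_i \colon X_i \to Y_{a(i)}$; this is exactly the data $(a,(f_i)_i)$. Hence the correspondence $f \leftrightarrow (a,(f_i)_i)$ is a bijection, so in particular every such pair arises from a unique $f$, obtained by composing each $f_i$ with the inclusion $Y_{a(i)} \hookrightarrow Y$ and assembling via the co-product universal property. There is no serious obstacle here beyond bookkeeping: the only point requiring a little care is the inductive extension of axiom (c) from two summands to $m$ summands, where one must invoke associativity of co-products to ensure that the summand $Y_j$ through which a map factors is well-defined and the factorization remains unique.
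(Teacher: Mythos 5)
Your proposal is correct and follows essentially the same route as the paper: decompose the source via the universal property of the co-product, upgrade Definition~\ref{defn:B}(c) to an $m$-fold bijection $\coprod_{j=1}^m \Hom(X_i, Y_j) \to \Hom(X_i, Y)$ by induction, and assemble. The only point you gloss over is the base case $m=0$ (needed when $Y=\bzero$, to rule out maps from a non-empty $X$), which the paper handles by citing Proposition~\ref{prop:map-to-empty}; your induction as written only starts at $m=1$.
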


\begin{proof}
For each $i$, the natural map
\begin{displaymath}
\coprod_{j=1}^m \Hom(X_i, Y_j) \to \Hom(X_i, Y)
\end{displaymath}
is a bijection. For $m=0$, this is Proposition~\ref{prop:map-to-empty}, for $m=1$ it is obvious, and for $m \ge 2$ it follows from Definition~\ref{defn:B}(c) inductively. We thus see that, given $i$, there is a unique $a(i) \in [m]$ and a unique morphism $f_i \colon X_i \to Y_{a(j)}$ such that the restriction of $f$ to $X_i$ is $f_i$ following by the natural map $Y_{a(j)} \to Y$. This proves the existence of $a$ and the $f_i$'s. That they determine $f$, and that every choice arises, follows from the definition of co-product.
\end{proof}

\begin{corollary} \label{cor:Bmap}
Let $f, f' \colon X \to X'$ and $g, g' \colon Y \to Y'$ be morphisms. Then $f \amalg g=f' \amalg g'$ if and only if $f=f'$ and $g=g'$.
\end{corollary}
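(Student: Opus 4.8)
The plan is to reduce both implications to the single structural fact that the co-product inclusions $\iota_{X'} \colon X' \to X' \amalg Y'$ and $\iota_{Y'} \colon Y' \to X' \amalg Y'$ are monomorphisms. The forward implication (``if'') is immediate: if $f = f'$ and $g = g'$ then $f \amalg g$ and $f' \amalg g'$ are characterized by the same universal property and hence coincide. So all the content lies in the reverse implication.

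For the reverse implication, recall that $h := f \amalg g$ is by definition the unique morphism $X \amalg Y \to X' \amalg Y'$ satisfying $h \circ \iota_X = \iota_{X'} \circ f$ and $h \circ \iota_Y = \iota_{Y'} \circ g$, where $\iota_X, \iota_Y$ are the structure maps of the source co-product. Assuming $f \amalg g = f' \amalg g'$ and pre-composing with $\iota_X$ yields $\iota_{X'} \circ f = \iota_{X'} \circ f'$, and pre-composing with $\iota_Y$ yields $\iota_{Y'} \circ g = \iota_{Y'} \circ g'$. Thus it suffices to know that $\iota_{X'}$ and $\iota_{Y'}$ are monomorphisms; by symmetry I treat only $\iota_{X'}$.

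To show $\iota_{X'}$ is a monomorphism, suppose $u, v \colon W \to X'$ satisfy $\iota_{X'} \circ u = \iota_{X'} \circ v$; I must deduce $u = v$. Decomposing $W = W_1 \amalg \cdots \amalg W_k$ into atoms (Definition~\ref{defn:B}(b)) and using the universal property of the co-product, the equality $u = v$ can be checked after pre-composing with each structure map $W_i \to W$, so I may assume $W$ is atomic. Now Definition~\ref{defn:B}(c) gives a bijection $\Hom(W, X') \amalg \Hom(W, Y') \to \Hom(W, X' \amalg Y')$ whose restriction to the first summand is precisely post-composition with $\iota_{X'}$. Since this map is in particular injective, and both $\iota_{X'} \circ u$ and $\iota_{X'} \circ v$ lie in the image of the $\Hom(W, X')$ summand, the hypothesis forces $u = v$.

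The main (and essentially only) obstacle is the monomorphism claim for co-product inclusions: in a general category with co-products this can fail, so it is crucial that the argument invoke axiom (c) of Definition~\ref{defn:B}, which is exactly what makes the inclusions into ``coproduct injections'' in the set-theoretic sense. Everything else is routine bookkeeping with the universal property of the co-product, and I would keep that part terse.
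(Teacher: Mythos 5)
Your proof is correct, but it takes a genuinely different route from the paper. The paper states this as an immediate corollary of Proposition~\ref{prop:Bmap}: decomposing everything into atoms, the data $(a, \{f_i\})$ attached to $f \amalg g$ is just the concatenation of the data attached to $f$ and to $g$, and since a morphism is uniquely determined by this data, equality of the co-products forces equality of the components. You instead reduce the whole statement to the fact that the co-product inclusions $X' \to X' \amalg Y'$ are monomorphisms, and prove that directly from Definition~\ref{defn:B}(b) and (c). Note that this monicity claim is exactly Corollary~\ref{cor:monic-sum} in the paper, which there is deduced from Proposition~\ref{prop:monic-sum}(a), whose proof in turn cites Corollary~\ref{cor:Bmap} --- so you could not simply cite it; but your direct argument (reduce to atomic $W$, then invoke the injectivity of the bijection $\Hom(W,X') \amalg \Hom(W,Y') \to \Hom(W, X' \amalg Y')$ restricted to the first summand) avoids any circularity. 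What your approach buys is a cleaner isolation of the one non-formal ingredient, namely that axiom (c) makes co-product inclusions behave like set-theoretic injections; what the paper's approach buys is economy, since Proposition~\ref{prop:Bmap} is needed anyway and the corollary falls out of it with no further work.
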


\begin{proposition} \label{prop:monic-sum}
Let $f \colon X \to X'$ and $g \colon Y \to Y'$ be morphisms. Then:
\begin{enumerate}
\item $f \amalg g$ is monomorphic if and only if $f$ and $g$ are monomorphic.
\item $f \amalg g$ is epimorphic if and only if $f$ and $g$ are epimorphic.
\end{enumerate}
\end{proposition}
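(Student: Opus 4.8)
The plan is to prove the two statements separately, in each case splitting into the two implications, and to rely only on axioms (a)--(c) (so everything takes place in the fixed $\rB_0$-category $\cB$). Throughout I will use the coproduct injections $\iota_X \colon X \to X \amalg Y$ and $\iota_{X'}\colon X' \to X' \amalg Y'$, together with the identities $(f \amalg g)\iota_X = \iota_{X'} f$ and $(f \amalg g)\iota_Y = \iota_{Y'} g$. The one auxiliary fact I would record first is that \emph{every coproduct injection is a monomorphism}: given $u, v \colon T \to X$ with $\iota_X u = \iota_X v$, I reduce to the case that $T$ is atomic (by Proposition~\ref{prop:Bmap} a map out of $T$ is determined by its restrictions to the atomic summands of $T$, and postcomposition commutes with these restrictions), and then observe that for atomic $T$ the assignment $u \mapsto \iota_X u$ is the restriction to one summand of the bijection in Definition~\ref{defn:B}(c), hence injective.

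For part (a), the implication ``$f \amalg g$ monic $\Rightarrow$ $f, g$ monic'' is formal: if $fu = fv$ then $(f \amalg g)(\iota_X u) = \iota_{X'} f u = \iota_{X'} f v = (f \amalg g)(\iota_X v)$, so monicity of $f \amalg g$ gives $\iota_X u = \iota_X v$, and monicity of $\iota_X$ gives $u = v$. For the converse, suppose $f, g$ are monic and $(f \amalg g)u = (f \amalg g)v$ for $u, v \colon T \to X \amalg Y$; again I reduce to $T$ atomic. By Definition~\ref{defn:B}(c) each of $u, v$ factors through exactly one summand. If they factor through different summands, say $u = \iota_X u_0$ and $v = \iota_Y v_0$, then $(f \amalg g)u = \iota_{X'} f u_0$ and $(f \amalg g)v = \iota_{Y'} g v_0$ lie in the images of the two disjoint summands of the bijection $\Hom(T, X') \amalg \Hom(T, Y') \to \Hom(T, X' \amalg Y')$, hence are distinct, a contradiction. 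So $u$ and $v$ factor through the same summand, say $u = \iota_X u_0$, $v = \iota_X v_0$, whence $\iota_{X'} f u_0 = \iota_{X'} f v_0$, and monicity of $\iota_{X'}$ and of $f$ forces $u_0 = v_0$, so $u = v$.

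For part (b), I would use the universal property of the coproduct in place of Definition~\ref{defn:B}(c). If $f, g$ are epic and $p(f \amalg g) = q(f \amalg g)$ for $p, q \colon X' \amalg Y' \to C$, then precomposing with $\iota_X$ gives $(p \iota_{X'})f = (q \iota_{X'})f$, so $p \iota_{X'} = q \iota_{X'}$; symmetrically $p \iota_{Y'} = q \iota_{Y'}$; the universal property then yields $p = q$. For the converse, given $f \amalg g$ epic and $pf = qf$ with $p, q \colon X' \to C$, I would form the test maps $p \amalg \id_{Y'},\ q \amalg \id_{Y'} \colon X' \amalg Y' \to C \amalg Y'$. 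Bifunctoriality of the coproduct gives $(p \amalg \id_{Y'})(f \amalg g) = (pf) \amalg g = (qf) \amalg g = (q \amalg \id_{Y'})(f \amalg g)$, so epimorphicity of $f \amalg g$ yields $p \amalg \id_{Y'} = q \amalg \id_{Y'}$, and Corollary~\ref{cor:Bmap} gives $p = q$.

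The main obstacle is the converse in part (a), specifically ruling out the ``mismatched'' case in which $u$ and $v$ factor through different summands yet become equal after applying $f \amalg g$. This is precisely where I need the full strength of Definition~\ref{defn:B}(c): not merely that a map out of an atom factors through a single summand, but that the two summands of $\Hom(T, X') \amalg \Hom(T, Y')$ are carried to \emph{disjoint} sets of morphisms into $X' \amalg Y'$. Once that case is excluded, the remainder of both parts is routine manipulation of coproduct injections together with Corollary~\ref{cor:Bmap}.
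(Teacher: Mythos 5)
Your proof is correct and follows essentially the same route as the paper's: reduce to an atomic source, use Definition~\ref{defn:B}(c) to show the two maps cannot land in different summands, and use Corollary~\ref{cor:Bmap} to separate the test maps $p \amalg \id_{Y'}$ and $q \amalg \id_{Y'}$. The one organizational difference is that you prove up front that coproduct injections are monomorphisms (directly from axiom (c), so there is no circularity) and use this in part (a), whereas the paper instead runs the ``only if'' directions via the $h \amalg \id_Y$ test-map trick and only afterwards deduces monicity of the injections as Corollary~\ref{cor:monic-sum}.
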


\begin{proof}
(a) First suppose that $f$ is not monomorphic. Let $h,h' \colon W \to X$ be distinct maps such that $fh=fh'$.  Then $h \amalg \id_Y$ and $h' \amalg \id_Y$ are maps $W \amalg Y \to X \amalg Y$, which are distinct by Corollary~\ref{cor:Bmap}, but have the same composition with $f \amalg g$. Thus $f \amalg g$ is not monomorphic.

Now suppose that $f$ and $g$ are monomorphic. Let $h,h' \colon W \to X \amalg Y$ be maps that have equal composition with $f \amalg g$. We show $h=h'$. It suffices to treat the case where $W$ is atomic, since a map out of $W$ is determined by its restrictions to the summands of $W$. Thus assume $W$ is atomic. Then $W$ maps into exactly one of $X$ or $Y$ under $h$; without loss of generality, say $X$. Then $W$ maps into $X'$ under $(f \amalg g) \circ h$. It follows that $W$ also maps into $X'$ under $(f \amalg g) \circ h'$, and so must map into $X$ under $h'$. Regarding $h$ and $h'$ as maps into $X$, we thus have $(fh \amalg g)=(fh' \amalg g)$ as maps $W \amalg Y \to W \amalg Y'$, and so $fh=fh'$ by Corollary~\ref{cor:Bmap}. Since $f$ is monomorphic, we conclude $h=h'$. Thus $f \amalg g$ is monomorphic.

(b) First suppose that $f$ is not epimorphic. Let $h,h' \colon X' \to Z$ be distinct maps such that $hf=h'f$. Then $h \amalg \id_{Y'}$ and $h' \amalg \id_{Y'}$ are maps $X' \amalg Y' \to X \amalg Y'$, which are distinct by Corollary~\ref{cor:Bmap}, but have the same composition with $f \amalg g$. Thus $f \amalg g$ is not epimorphic.

Now suppose that $f$ and $g$ are epimorphic. Let $h,h' \colon X' \amalg Y' \to Z$ be maps having equal composition with $f \amalg g$. Restricting $h$ and $h'$ to $X'$, we see that they have equal composition with $f$. Since $f$ is epimorphic, this means $h$ and $h'$ have equal restriction to $X'$. Similarly, they have equal restriction to $Y'$. By the definition of co-product, this means $h=h'$, and so $f \amalg g$ is epimorphic.
\end{proof}

\begin{corollary} \label{cor:monic-sum}
For any objects $X$ and $Y$, the natural map $X \to X \amalg Y$ is a monomorphism.
\end{corollary}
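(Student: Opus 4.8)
The plan is to deduce this directly from Corollary~\ref{cor:Bmap}, mimicking the first half of the proof of Proposition~\ref{prop:monic-sum}(a). Write $\iota \colon X \to X \amalg Y$ for the canonical inclusion, and suppose $h, h' \colon W \to X$ satisfy $\iota h = \iota h'$; the goal is to show $h = h'$.

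The key observation is that $\iota h$ is precisely the restriction to the $W$-summand of the co-product map $h \amalg \id_Y \colon W \amalg Y \to X \amalg Y$, and likewise $\iota h'$ is the restriction of $h' \amalg \id_Y$. Both of these co-product maps restrict to the inclusion $Y \to X \amalg Y$ on the $Y$-summand. Hence, once we know $\iota h = \iota h'$, the maps $h \amalg \id_Y$ and $h' \amalg \id_Y$ agree on both summands of $W \amalg Y$, and so by the universal property of the co-product they are equal. Applying Corollary~\ref{cor:Bmap} (with the two maps out of $Y$ both taken to be $\id_Y$) then yields $h = h'$, as desired.

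I expect essentially no obstacle here: the only point requiring care is the bookkeeping identity expressing $\iota h$ as the composite of $h \amalg \id_Y$ with the inclusion of the $W$-summand, which is immediate from the definition of the co-product map. In particular, no preliminary reduction to the case of atomic $W$ is needed, since the universal property of co-products already handles arbitrary $W$.
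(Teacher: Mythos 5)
Your proof is correct, but it takes a genuinely different route from the paper's. The paper observes that the inclusion $X \to X \amalg Y$ is (isomorphic to) $\id_X \amalg j$, where $j \colon \bzero \to Y$ is the unique map, and then invokes Proposition~\ref{prop:monic-sum}(a) in the direction ``$f$ and $g$ monic $\Rightarrow$ $f \amalg g$ monic,'' whose proof requires reducing to atomic $W$ and appealing to Definition~\ref{defn:B}(c). You instead argue directly: from $\iota h = \iota h'$ you deduce $h \amalg \id_Y = h' \amalg \id_Y$ by the uniqueness clause of the co-product's universal property (both maps restrict to $\iota h = \iota h'$ on $W$ and to the inclusion of $Y$ on $Y$), and then conclude $h = h'$ from Corollary~\ref{cor:Bmap}. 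This is the same ``pad with the identity'' device the paper uses in the easy half of Proposition~\ref{prop:monic-sum}(a), and there is no circularity: Corollary~\ref{cor:Bmap} is established earlier as a consequence of Proposition~\ref{prop:Bmap} and does not depend on the statement being proved. Your argument is shorter and more elementary for this particular corollary and, as you note, needs no reduction to atomic $W$; the paper's route costs nothing extra since Proposition~\ref{prop:monic-sum} is needed anyway and packages the corollary as a special case of the general closure of monomorphisms under co-products.
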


\begin{proof}
Let $i$ be the identity map of $X$, and let $j \colon \bzero \to Y$ be the unique map. Clearly, $i$ and $j$ are monomorphisms. The map in question is (isomorphic to) $i \amalg j$, and is thus a monomorphism by Proposition~\ref{prop:monic-sum}(a).
\end{proof}

\begin{proposition} \label{prop:B0-fiber}
Fiber products distribute over co-products, in the following sense. Let $X$, $X'$, and $Y$ be objects of $\cB$ equipped with morphisms to another object $Z$. Suppose that the fiber products $X \times_Z Y$ and $X' \times_Z Y$ exist. Then the fiber product $(X \amalg X') \times_Z Y$ also exists, and the natural map
\begin{displaymath}
(X \times_Z Y) \amalg (X' \times_Z Y) \to (X \amalg X') \times_Z Y
\end{displaymath}
is an isomorphism.
\end{proposition}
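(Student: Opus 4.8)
The plan is to verify that the object $P := (X \times_Z Y) \amalg (X' \times_Z Y)$, equipped with its natural projections to $X \amalg X'$ and to $Y$, satisfies the universal property of the fiber product. This simultaneously shows that $(X \amalg X') \times_Z Y$ exists (namely, it is $P$) and that the comparison map of the statement, being the canonical map between two realizations of the same fiber product, is an isomorphism. Concretely, for any object $W$ I would consider the set $F(W)$ of pairs $(u \colon W \to X \amalg X', \, v \colon W \to Y)$ whose composites to $Z$ agree, and show that the map $\Hom(W, P) \to F(W)$ induced by the two projections of $P$ is a bijection, naturally in $W$.

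First I would reduce to the case that $W$ is atomic. Writing $W = W_1 \amalg \cdots \amalg W_k$ as a co-product of atoms, the co-product universal property gives $\Hom(W, T) = \prod_i \Hom(W_i, T)$ for any target $T$; since equality of two maps $W \to Z$ can be tested on each summand, we likewise obtain $F(W) = \prod_i F(W_i)$, and the comparison map respects these product decompositions. Thus it suffices to treat atomic $W$.

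Next I would compute both sides for atomic $W$ using Definition~\ref{defn:B}(c). On one hand, axiom (c) gives $\Hom(W, P) = \Hom(W, X \times_Z Y) \amalg \Hom(W, X' \times_Z Y)$, and by definition of the fiber product each summand is the set of pairs $(W \to X, \, W \to Y)$ (respectively $(W \to X', \, W \to Y)$) agreeing over $Z$. On the other hand, applying axiom (c) to maps $W \to X \amalg X'$ shows that every such map factors uniquely through $X$ or through $X'$, so $F(W)$ splits as the disjoint union of those pairs whose first component factors through $X$ and those whose first component factors through $X'$. Matching these two disjoint unions summand by summand yields the desired bijection; here one uses that the structure map $X \amalg X' \to Z$ restricts to the given maps $X \to Z$ and $X' \to Z$, so that the ``agree over $Z$'' conditions correspond exactly.

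I expect the only real subtlety to be bookkeeping: confirming that the reduction to atoms is compatible with the equalizer-type condition defining $F$, and that the bijection constructed on atoms is genuinely the one induced by the projection maps of $P$ rather than merely an abstract bijection of Hom-sets. Both points follow from Proposition~\ref{prop:Bmap} and the co-product universal property, so no new ideas beyond axiom (c) should be needed.
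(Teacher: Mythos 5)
Your proposal is correct and follows essentially the same route as the paper: both verify that $P = (X \times_Z Y) \amalg (X' \times_Z Y)$ represents the fiber-product functor by reducing to atomic test objects $W$ and then splitting both sides with Definition~\ref{defn:B}(c). Your write-up is in fact slightly more explicit than the paper's on two points it leaves implicit --- why the check reduces to atoms (both functors send co-products in $W$ to products) and why the resulting bijection is the one induced by the projections of $P$.
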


\begin{proof}
Let $P=(X \times_Z Y) \amalg (X' \times_Z Y)$ and let $\Phi$ be the functor on $\cB$ given by
\begin{displaymath}
\Phi(W) = \big( \Hom(W, X) \amalg \Hom(W, X') \big) \times_{\Hom(W,Z)} \Hom(W,Y).
\end{displaymath}
Since $P$ has natural maps to $X \amalg X'$ and $Y$ that agree when composed to $Z$, there is a natural transformation $\Hom(-, P) \to \Phi$. It suffices to show that this is an isomorphism, for then $P$ will represent the fiber product. To check that this is an isomorphism, it suffices to verify that $\Hom(W,P) \to \Phi(W)$ is a bijection when $W$ is an atom. In this case, we have natural identifications
\begin{align*}
\Hom(W, P)
=& \Hom(W, X \times_Z Y) \amalg \Hom(W, X' \times_Z Y) \\
=& \big( \Hom(W, X) \times_{\Hom(W,Z)} \Hom(W,Y) \big) \amalg \big( \Hom(W,X') \times_{\Hom(W,Z)} \Hom(W,Y) \big) \\
=& \Phi(W),
\end{align*}
and so the result follows.
\end{proof}

\subsection{Properties of B-categories} \label{ss:B-prop}

We now prove some general results on $\rB$-categories. We fix a $\rB$-category $\cB$ for the duration of \S \ref{ss:B-prop}.

\begin{proposition} \label{prop:subatomic}
The only subobjects of an atom $X$ are $\bzero$ and $X$.
\end{proposition}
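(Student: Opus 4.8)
The plan is to unwind the definition of subobject and argue directly with the decomposition axioms. A subobject of $X$ is represented by a monomorphism $f \colon W \to X$, and the claim amounts to saying that any such $f$ is either (equivalent to) the unique map $\bzero \to X$ or is an isomorphism. Using Definition~\ref{defn:B}(b) I would write $W = W_1 \amalg \cdots \amalg W_n$ as a co-product of atoms. If $n = 0$ then $W \cong \bzero$ and we recover the zero subobject. If $n = 1$ then $W$ is itself an atom, $f$ is a monomorphism of atoms, and Definition~\ref{defn:B}(e) forces $f$ to be an isomorphism, giving the subobject $X$. So the whole content is in ruling out $n \ge 2$.

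For the main step I would assume $n \ge 2$ and derive a contradiction. Let $\iota_i \colon W_i \to W$ be the co-product inclusions, each a monomorphism by Corollary~\ref{cor:monic-sum}. Then $f \circ \iota_1 \colon W_1 \to X$ is a composite of monomorphisms, hence a monomorphism of atoms, hence an isomorphism $g$ by Definition~\ref{defn:B}(e). I would then form the auxiliary map
\[
h = \iota_1 \circ g^{-1} \circ f \circ \iota_2 \colon W_2 \to W,
\]
and compute $f \circ h = (f \circ \iota_1) \circ g^{-1} \circ (f \circ \iota_2) = g \circ g^{-1} \circ (f \circ \iota_2) = f \circ \iota_2$. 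Since $f$ is a monomorphism, this yields $h = \iota_2$.

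The contradiction then comes from Proposition~\ref{prop:Bmap}: viewing $W_2$ as a single atom mapping into the atomic decomposition $W = W_1 \amalg \cdots \amalg W_n$, every map out of $W_2$ factors through a \emph{unique} summand. But $h$ visibly factors through $W_1$ (it is $\iota_1$ post-composed with something), whereas $\iota_2$ factors through $W_2$; since $1 \ne 2$ and this target index is unique, $h \ne \iota_2$, contradicting $h = \iota_2$. Hence $n \le 1$, and the proposition follows.

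The hard part, and the only genuinely nonobvious move, is this last step: producing an honest witness that $f$ fails to be monic once $W$ has two atomic summands. The construction of $h$, which ``copies'' the $W_2$-component onto the $W_1$-component along the isomorphism $g$, is the crux; once it is in hand, monicity collapses two distinct inclusions and the uniqueness clause of Proposition~\ref{prop:Bmap} supplies the contradiction. An alternative route would be to compute $W \times_X W$ via distributivity of fiber products over co-products (Proposition~\ref{prop:B0-fiber}), together with the fact that the diagonal $W \to W \times_X W$ is an isomorphism when $f$ is monic; but the direct argument above seems cleaner and avoids having to justify that the number of atomic summands is well defined.
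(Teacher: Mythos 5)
Your proof is correct and is essentially the paper's argument: both decompose the subobject into atoms, use Corollary~\ref{cor:monic-sum} and Definition~\ref{defn:B}(e) to see each atomic summand maps isomorphically to $X$, and then rule out two or more summands by exhibiting two distinct maps that become equal after composing with $f$. Your explicit witness $h = \iota_1 \circ g^{-1} \circ f \circ \iota_2$ is just the unpacked form of the paper's observation that the two inclusions $X \rightrightarrows X \amalg X$ are distinct but equalized by the fold map $X \amalg X \to X$.
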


\begin{proof}
Suppose that $Y$ is a non-empty subobject of $X$. Write $Y=Y_1 \amalg \cdots \amalg Y_n$ with each $Y_i$ an atom and $n \ge 1$. Since $Y_i \to Y$ is monic by Corollary~\ref{cor:monic-sum}, it follows that $Y_i \to X$ is monic, and thus an isomorphism by Definition~\ref{defn:B}(e). It now follows that $n=1$, since the map $X \amalg X \to X$ is not monic (the two natural maps $X \to X \amalg X$ are distinct by Definition~\ref{defn:B}(c), but have equal composition to $X$). This completes the proof.
\end{proof}

\begin{proposition} \label{prop:epicatom}
Any map of atoms is epimorphic.
\end{proposition}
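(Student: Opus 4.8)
The plan is to establish that $f$ is epimorphic via the equalizer criterion: I will show that any two maps out of $Y$ that become equal after precomposition with $f$ must already be equal. The first thing I would record is that a $\rB$-category has all finite limits. Indeed, Definition~\ref{defn:B}(d) supplies fiber products and a final object $\bone$, so binary products exist as $A \times B = A \times_{\bone} B$; consequently the equalizer of a parallel pair $g, g'\colon Y \to Z$ can be constructed as the fiber product of $(g,g')\colon Y \to Z \times Z$ along the diagonal $\Delta\colon Z \to Z \times Z$. I will write $\iota\colon E \to Y$ for this equalizer, noting that it is a monomorphism and has the usual universal property: any map into $Y$ that is coequalized by $g$ and $g'$ factors uniquely through $\iota$.

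With this in place, suppose $f\colon X \to Y$ is a map of atoms and $g, g'\colon Y \to Z$ satisfy $gf = g'f$. By the universal property, $f$ factors as $f = \iota h$ for some $h\colon X \to E$. The crux of the argument is to identify $E$. Since $\iota$ realizes $E$ as a subobject of the atom $Y$, Proposition~\ref{prop:subatomic} forces either $E \cong \bzero$ or $\iota$ to be an isomorphism. The first alternative is impossible: $X$ is an atom, hence non-empty, and the map $h\colon X \to E \cong \bzero$ would contradict Proposition~\ref{prop:map-to-empty}. Therefore $\iota$ is an isomorphism, and from $g\iota = g'\iota$ I conclude $g = g'$. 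This proves that $f$ is epimorphic.

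I do not anticipate a genuine obstacle here, since the whole argument rests on two facts already in hand: the availability of equalizers and Proposition~\ref{prop:subatomic}. The only point needing a little care is the construction of equalizers from fiber products and products; this is the standard limit-theoretic fact and relies solely on Definition~\ref{defn:B}(d). Should one wish to bypass general limit theory, I would instead define $E = Y \times_{Z \times Z} Z$ outright and verify the equalizer universal property directly, which is routine.
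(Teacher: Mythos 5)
Your proof is correct and follows essentially the same route as the paper: both arguments form the equalizer of $g,g'$ (the paper simply cites that $\cB$ has finite limits, while you spell out the standard construction from fiber products and the final object), observe that $f$ factors through it, rule out the empty case via Proposition~\ref{prop:map-to-empty}, and invoke Proposition~\ref{prop:subatomic} to conclude the equalizer is all of $Y$. No issues.
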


\begin{proof}
Let $f \colon X \to Y$ be a map of atoms, and let $g,h \colon Y \to Z$ be maps such that $g \circ f=h \circ f$. Since $\cB$ has finite limits, the equalizer $\Eq(g,h)$ of $g$ and $h$ exists, and is naturally a subobject of $Y$. Since $f$ factors through $\Eq(g,h)$ and $X$ is non-empty, it follows that $\Eq(g,h)$ is non-empty (Proposition~\ref{prop:map-to-empty}). Thus $\Eq(g,h)$ is equal to $Y$ (Proposition~\ref{prop:subatomic}), and so $g=h$.
\end{proof}

\begin{proposition} \label{prop:Bmonic}
Let $f \colon X \to Y$ be a morphism. Write $X=X_1 \amalg \cdots \amalg X_n$ and $Y=Y_1 \amalg \cdots \amalg Y_m$ where each $X_i$ and $Y_i$ is atomic. Let $a \colon [n] \to [m]$ and $f_i \colon X_i \to Y_{a(i)}$ be as in Proposition~\ref{prop:Bmap}.
\begin{enumerate}
\item $f$ is epimorphic if and only if $a$ is surjective.
\item $f$ is monomorphic if and only if $a$ is injective and each $f_i$ is an isomorphism.
\end{enumerate}
\end{proposition}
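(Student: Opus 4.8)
The plan is to reduce both statements to the case of an atomic target by grouping the source summands according to the fibers of $a$. For each $j \in [m]$, set $X^{(j)} = \coprod_{i \in a^{-1}(j)} X_i$, interpreted as $\bzero$ when the fiber is empty, so that $X = \coprod_{j=1}^m X^{(j)}$ and $f = \coprod_{j=1}^m g_j$, where $g_j \colon X^{(j)} \to Y_j$ is the map induced by the relevant $f_i$'s. Since $f$ is a co-product of the $g_j$, Proposition~\ref{prop:monic-sum} (applied inductively) lets me test whether $f$ is epimorphic (resp. monomorphic) by testing each $g_j$ separately. This converts both parts into questions about a single morphism whose target is an atom.

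For part (a), I would show that $g_j$ is epimorphic exactly when $a^{-1}(j)$ is non-empty. If the fiber is empty then $g_j$ is the map $\bzero \to Y_j$, and this is not epimorphic: the two structural maps $Y_j \to Y_j \amalg Y_j$ are distinct (Definition~\ref{defn:B}(c)) yet agree after pre-composition with $\bzero \to Y_j$. If the fiber is non-empty, pick any $i \in a^{-1}(j)$; then the composite of the inclusion $X_i \hookrightarrow X^{(j)}$ with $g_j$ is $f_i$, which is epimorphic by Proposition~\ref{prop:epicatom}, and a morphism $u$ admitting an epimorphic composite $u \circ v$ is itself epimorphic. Thus every $g_j$ is epimorphic iff every fiber of $a$ is non-empty, i.e.\ iff $a$ is surjective.

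For part (b), the key preliminary observation is that each $f_i$ is automatically an isomorphism once $f$ is monomorphic: the composite $X_i \hookrightarrow X^{(a(i))} \xrightarrow{g_{a(i)}} Y_{a(i)}$ equals $f_i$ and is a composite of monomorphisms (using Corollary~\ref{cor:monic-sum} for the inclusion), hence a monomorphism of atoms, hence an isomorphism by Definition~\ref{defn:B}(e). The converse direction is then immediate: if $a$ is injective, each $g_j$ is either $\bzero \to Y_j$ or an isomorphism $f_i$, both monomorphic, so $f$ is monomorphic.

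The main obstacle is proving that $f$ monomorphic forces $a$ to be injective, and here I want to stress that I cannot appeal to non-emptiness of fiber products of atoms (the later axiom (f)), since this lemma lives in a general $\rB$-category. Instead I would argue directly: if $a(i_1) = a(i_2) = j$ with $i_1 \neq i_2$, then since $f_{i_1}$ and $f_{i_2}$ are already known to be isomorphisms, I compare the inclusion $\alpha \colon X_{i_1} \hookrightarrow X^{(j)}$ with the map $\beta \colon X_{i_1} \xrightarrow{f_{i_1}} Y_j \xrightarrow{f_{i_2}^{-1}} X_{i_2} \hookrightarrow X^{(j)}$. Both satisfy $g_j \circ \alpha = g_j \circ \beta = f_{i_1}$, but $\alpha$ factors through the summand $X_{i_1}$ while $\beta$ factors through $X_{i_2}$, so they are distinct by the uniqueness in Proposition~\ref{prop:Bmap}. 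This contradicts that $g_j$, hence $f$, is monomorphic, forcing $a$ to be injective and completing the proof.
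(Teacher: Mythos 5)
Your proposal is correct and follows essentially the same route as the paper: the same grouping of source summands over each $Y_j$ and the same reduction via Proposition~\ref{prop:monic-sum}, with part (a) argued identically. The only divergence is in part (b), where the paper deduces injectivity of $a$ in one step from Proposition~\ref{prop:subatomic} (a monomorphism $X^j \to Y_j$ forces $X^j$ to be empty or a single atom mapping isomorphically), whereas you re-derive this by exhibiting an explicit pair of distinct maps $\alpha, \beta$ into $X^{(j)}$ with equal composite; both arguments ultimately rest on Definition~\ref{defn:B}(e), so this is a local variation rather than a different method.
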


\begin{proof}
For $j \in [m]$, let $X^j = \coprod_{a(i)=j} X_i$, and let $f^j \colon X^j \to Y_j$ be the restriction of $f$. Then $f$ is the co-product of the $f^j$, and so by Proposition~\ref{prop:monic-sum}, $f$ is monomorphic (resp.\ epimorphic) if and only if each $f^j$ is.

(a) Suppose that $f$ is monomorphic. Then each $f^j$ is monomorphic, and so by Proposition~\ref{prop:subatomic} either $X^j$ is empty or $f^j$ is an isomorphism. It follows that $a$ is injective and each $f_i$ is an isomorphism. Conversely, suppose that $a$ is injective and each $f_i$ is an isomorphism. Then each $f^j$ is clearly monomorphic, and so $f$ is too.

(b) Suppose that $f$ is epimorphic. Then each $f^j$ is epimorphic. It follows that $X^j$ is non-empty, as $\bzero \to Y_j$ is not epimorphic (the two maps $Y_j \to Y_j \amalg Y_j$ are distinct by Definition~\ref{defn:B}(c) but have the same restriction to $\bzero$). Thus $a$ is surjective. Conversely, suppose that $a$ is surjective. Then for each $j \in [m]$ there is some $i$ with $a(i)=j$, and then map $f_i$ is epimorphic by Proposition~\ref{prop:epicatom}. It follows that $f^j$ is epimorphic too. Since this holds for each $j$, we find that $f$ is epimorphic.
\end{proof}

\begin{corollary} \label{cor:balanced}
The category $\cB$ is balanced: a morphism that is both monomorphic and epimorphic is an isomorphism.
\end{corollary}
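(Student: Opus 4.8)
The plan is to read the result off directly from the combinatorial description of monomorphisms and epimorphisms furnished by Proposition~\ref{prop:Bmonic}. Suppose $f \colon X \to Y$ is both monomorphic and epimorphic. Writing $X = X_1 \amalg \cdots \amalg X_n$ and $Y = Y_1 \amalg \cdots \amalg Y_m$ as co-products of atoms, let $a \colon [n] \to [m]$ and $f_i \colon X_i \to Y_{a(i)}$ be the associated data supplied by Proposition~\ref{prop:Bmap}.

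First I would invoke Proposition~\ref{prop:Bmonic}(b): since $f$ is monomorphic, $a$ is injective and each $f_i$ is an isomorphism. Then I would invoke Proposition~\ref{prop:Bmonic}(a): since $f$ is epimorphic, $a$ is surjective. Combining these, $a$ is a bijection (in particular $n = m$) and every $f_i$ is an isomorphism.

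It remains to deduce that $f$ itself is an isomorphism. Using the notation from the proof of Proposition~\ref{prop:Bmonic}, I would write $f$ as the co-product $\coprod_{j \in [m]} f^j$ of its restrictions $f^j \colon X^j \to Y_j$, where $X^j = \coprod_{a(i)=j} X_i$. Because $a$ is a bijection, each fibre $a^{-1}(j)$ is a singleton, so $f^j$ is simply the isomorphism $f_i$ for the unique $i$ with $a(i) = j$. Thus $f$ is a co-product of isomorphisms, and the co-product of the inverse maps $(f_i)^{-1}$ provides a two-sided inverse to $f$, so $f$ is an isomorphism.

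I do not expect any serious obstacle, as the corollary is essentially a repackaging of Proposition~\ref{prop:Bmonic}. The only point deserving a word of care is the final one, namely that a co-product of isomorphisms is an isomorphism; this is a general fact valid in any category with the relevant co-products, and can be justified here from the universal property of the co-product (or, if one prefers an internal argument, by exhibiting the candidate inverse via Proposition~\ref{prop:Bmap} and checking both composites are identities using Corollary~\ref{cor:Bmap}).
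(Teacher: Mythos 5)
Your proof is correct and follows the same route as the paper: both deduce from Proposition~\ref{prop:Bmonic} that $a$ is a bijection and each $f_i$ is an isomorphism, and conclude that $f$ is an isomorphism. Your extra care in justifying that a co-product of isomorphisms is an isomorphism is fine but the paper treats this as immediate.
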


\begin{proof}
Using notation as in the proposition, if $f$ is monomorphic and epimorphic then $a$ is a bijection and each $f_i$ is an isomorphism, and so $f$ is an isomorphism.
\end{proof}

\begin{corollary} \label{cor:Bsub}
Let $X=X_1 \amalg \cdots \amalg X_n$ with each $X_i$ atomic. For a subset $S$ of $[n]$, let $X_S=\coprod_{i \in S} X_i$. Then every subobject of $X$ is one of the $X_S$, and $X_S \subset X_T$ if and only if $S \subset T$.
\end{corollary}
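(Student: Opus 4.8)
The plan is to classify the monomorphisms into $X$ by means of the combinatorial description from Proposition~\ref{prop:Bmonic}, and then to read off the subobject ordering from the uniqueness clause of Proposition~\ref{prop:Bmap}.

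First I would show that every subobject of $X$ equals some $X_S$. A subobject is represented by a monomorphism $f \colon W \to X$; I write $W = W_1 \amalg \cdots \amalg W_k$ with each $W_j$ atomic and apply Proposition~\ref{prop:Bmonic}(b). This produces an injection $a \colon [k] \to [n]$ together with isomorphisms $f_j \colon W_j \to X_{a(j)}$. Setting $S = \im(a)$, the maps $f_j$ assemble (using that $a$ is injective, so distinct summands $W_j$ land on distinct atoms $X_i$ with $i \in S$) into an isomorphism $h \colon W \to X_S$ satisfying $\iota_S \circ h = f$, where $\iota_S \colon X_S \to X$ is the canonical inclusion. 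That $\iota_S$ is itself a monomorphism follows from Corollary~\ref{cor:monic-sum} (or directly from Proposition~\ref{prop:Bmonic}(b)). Hence $f$ and $\iota_S$ determine the same subobject of $X$.

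Next I would establish the ordering. If $S \subseteq T$, then $X_S$ is a sub-co-product of $X_T$ and $\iota_S$ factors as $X_S \to X_T \xrightarrow{\iota_T} X$, so $X_S \subseteq X_T$ as subobjects. Conversely, suppose $X_S \subseteq X_T$, i.e.\ $\iota_S = \iota_T \circ g$ for some $g \colon X_S \to X_T$. For each $i \in S$, the restriction of $\iota_S$ to the atom $X_i$ is the canonical inclusion $X_i \to X$, which by Proposition~\ref{prop:Bmap} factors through the $i$-th atom of $X$, via the identity, and through no other atom. On the other hand, the composite $X_i \xrightarrow{g} X_T \xrightarrow{\iota_T} X$ factors through the atom of $X$ indexed by the element of $T$ that receives $X_i$ under $g$. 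By the uniqueness clause of Proposition~\ref{prop:Bmap}, these two atoms of $X$ must coincide, forcing that index to be $i$; in particular $i \in T$. As this holds for every $i \in S$, we conclude $S \subseteq T$.

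Finally, distinctness of the $X_S$ is immediate from the ordering: if $X_S$ and $X_T$ agree as subobjects, then $X_S \subseteq X_T$ and $X_T \subseteq X_S$, whence $S \subseteq T$ and $T \subseteq S$, so $S = T$. The argument is essentially bookkeeping once Propositions~\ref{prop:Bmap} and~\ref{prop:Bmonic} are available; the one place calling for slight care is the converse direction of the ordering, where one must invoke the \emph{uniqueness} of the atomic factorization rather than merely its existence.
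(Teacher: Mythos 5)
Your proof is correct and follows the same route as the paper, which simply states that the corollary ``follows immediately from the structure of monomorphisms given in Proposition~\ref{prop:Bmonic}''; you have just written out the bookkeeping the paper leaves implicit. The one point you flag as needing care---using the uniqueness clause of Proposition~\ref{prop:Bmap} for the converse direction of the ordering---is handled correctly.
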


\begin{proof}
This follows immediately from the structure of monomorphisms given in Proposition~\ref{prop:Bmonic}.
\end{proof}

\begin{corollary} \label{cor:im}
Let $f \colon X \to Y$ be a morphism, and use notation as in Proposition~\ref{prop:Bmap}.
\begin{enumerate}
\item $\im(f)$ exists, and is equal to $\coprod_{j \in \im(a)} Y_j$.
\item $f$ is an epimorphism if and only if $\im(f)=Y$.
\item The map $X \to \im(f)$ is an epimorphism, and a monomorphism if and only if $f$ is.
\end{enumerate}
\end{corollary}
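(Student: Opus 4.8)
The plan is to reduce everything to the explicit description of subobjects of $Y$ from Corollary~\ref{cor:Bsub} together with the mono/epi criterion of Proposition~\ref{prop:Bmonic}. Recall that $\im(f)$ is by definition the smallest subobject of $Y$ through which $f$ factors; since Corollary~\ref{cor:Bsub} tells us the subobjects of $Y$ are exactly the $Y_S = \coprod_{j \in S} Y_j$ for $S \subseteq [m]$, ordered by inclusion of subsets, establishing (a) amounts to identifying the least $S$ for which $f$ factors through $Y_S$.

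First I would prove the factorization criterion: $f$ factors through $Y_S$ if and only if $\im(a) \subseteq S$. The ``if'' direction is immediate, since when $\im(a) \subseteq S$ each component $f_i \colon X_i \to Y_{a(i)}$ lands in $Y_S$, and the co-product of these maps is the required factorization. The ``only if'' direction is where the real content sits: given a factorization $f = \iota_S \circ g$ with $\iota_S \colon Y_S \to Y$ the inclusion, I apply Proposition~\ref{prop:Bmap} to $g$ to extract its combinatorial data $a' \colon [n] \to S$, observe that post-composing with $\iota_S$ realizes $a'$ (viewed in $[m]$) as data for $f$, and invoke the uniqueness clause of Proposition~\ref{prop:Bmap} to conclude $a(i) = a'(i) \in S$ for all $i$. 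This uniqueness is the key step, and essentially the only subtle point in the whole argument. It follows that the subsets $S$ with $f$ factoring through $Y_S$ are exactly those containing $\im(a)$, so the least one is $\im(a)$ itself; this gives existence and the identification $\im(f) = \coprod_{j \in \im(a)} Y_j$, proving (a).

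Part (b) then follows by combining (a) with Proposition~\ref{prop:Bmonic}(a): since each $Y_j$ is a non-empty atom, Corollary~\ref{cor:Bsub} gives $Y_S = Y$ precisely when $S = [m]$, so $\im(f) = Y$ if and only if $\im(a) = [m]$, if and only if $a$ is surjective, if and only if $f$ is epimorphic.

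For part (c) I would analyze the corestricted map $g \colon X \to \im(f) = Y_{\im(a)}$ through its combinatorial data, which is simply $a$ with codomain restricted to $\im(a)$ together with the same component maps $f_i$. This restricted function is surjective onto $\im(a)$ by construction, so Proposition~\ref{prop:Bmonic}(a) makes $g$ epimorphic. For the monomorphism statement, Proposition~\ref{prop:Bmonic}(b) says $g$ is monic if and only if its underlying function is injective and each $f_i$ is an isomorphism; since restricting the codomain does not affect injectivity and the $f_i$ are unchanged, these are exactly the conditions characterizing monomorphicity of $f$, so $g$ is monic if and only if $f$ is. The main obstacle throughout is purely the uniqueness argument in (a); everything else is bookkeeping against the two cited results.
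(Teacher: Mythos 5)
Your proof is correct and follows exactly the route the paper intends: the paper's own proof is a one-line appeal to Proposition~\ref{prop:Bmap}, Proposition~\ref{prop:Bmonic}, and Corollary~\ref{cor:Bsub}, and you have simply filled in the details of that same argument. The factorization criterion via the uniqueness clause of Proposition~\ref{prop:Bmap} is the right way to make part (a) precise.
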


\begin{proof}
This follows from the structure of $f$ given in Proposition~\ref{prop:Bmap}, the characterization of monomorphisms and epimorphisms in Proposition~\ref{prop:Bmonic}, and the classification of subobjects in Corollary~\ref{cor:Bsub}.
\end{proof}

\begin{proposition} \label{prop:mono}
Let $f \colon X \to Y$ be a morphism, and let $\Delta \colon X \to X \times_Y X$ be the diagonal map. The following are equivalent:
\begin{enumerate}
\item $f$ is monomorphic.
\item $\Delta$ is an isomorphism.
\item $\Delta$ is epimorphic.
\end{enumerate}
\end{proposition}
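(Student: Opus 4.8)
The plan is to prove a cycle of implications, observing that almost all the content is formal category theory and that the $\rB$-category hypotheses enter at exactly one point, through balancedness. The organizing fact I would record first is the general observation — valid in any category with fiber products — that the diagonal is a split monomorphism. Writing $p_1, p_2 \colon X \times_Y X \to X$ for the two projections, we have $p_1 \Delta = p_2 \Delta = \id_X$, so $\Delta$ admits a left inverse and is in particular monomorphic. This structural fact gets used twice.

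For (a) $\iff$ (b) I would argue as follows. For (a) $\Rightarrow$ (b), assuming $f$ monomorphic, I claim $p_1$ is a two-sided inverse of $\Delta$. One composite is $p_1 \Delta = \id_X$. For the other, I compare $\Delta p_1$ with $\id_{X \times_Y X}$ by composing each with the two projections: using $p_1 \Delta = p_2 \Delta = \id_X$ one finds $p_1 (\Delta p_1) = p_1$ and $p_2 (\Delta p_1) = p_1$, while $p_1 \id = p_1$ and $p_2 \id = p_2$. Since $f p_1 = f p_2$ (both are the structure map to $Y$) and $f$ is monomorphic, we get $p_1 = p_2$, so the two projections of $\Delta p_1$ and of $\id$ agree; by the universal property of the fiber product, $\Delta p_1 = \id_{X \times_Y X}$, so $\Delta$ is an isomorphism. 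For (b) $\Rightarrow$ (a), given $g, h \colon W \to X$ with $fg = fh$, the pair induces a map $(g,h) \colon W \to X \times_Y X$ with $p_1 (g,h) = g$ and $p_2 (g,h) = h$; applying $\Delta^{-1}$ and using $p_i \Delta = \id_X$ forces $g = \Delta^{-1}(g,h) = h$, so $f$ is monomorphic.

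Finally I would dispatch (b) $\iff$ (c). The direction (b) $\Rightarrow$ (c) is immediate, as any isomorphism is an epimorphism. For (c) $\Rightarrow$ (b) I combine the two facts already in hand: $\Delta$ is always monomorphic, and by hypothesis it is epimorphic, so by Corollary~\ref{cor:balanced} it is an isomorphism. This is the sole step that uses the $\rB$-category structure.

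I do not anticipate a genuine obstacle: the proposition is essentially a repackaging of the standard ``monomorphic iff diagonal is an isomorphism'' lemma together with balancedness. The one place meriting a little care is the verification of $\Delta p_1 = \id$ in (a) $\Rightarrow$ (b), where one must remember to invoke monicity of $f$ to conclude $p_1 = p_2$ before appealing to the universal property of the fiber product.
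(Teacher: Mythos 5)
Your proof is correct and follows exactly the same route as the paper: the equivalence of (a) and (b) and the implication (b) $\Rightarrow$ (c) are general category theory, and (c) $\Rightarrow$ (b) uses that $\Delta$ is always a (split) monomorphism together with balancedness (Corollary~\ref{cor:balanced}). The paper simply cites these standard facts without writing out the diagram chase you supply.
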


\begin{proof}
In any category, (a) and (b) are equivalent, and (b) implies (c). In a balanced category (such as a $\rB$-category), (c) implies (b) since $\Delta$ is always monomorphic.
\end{proof}

\begin{proposition} \label{prop:Hom-finite}
For any objects $X$ and $Y$, the set $\Hom(X,Y)$ is finite.
\end{proposition}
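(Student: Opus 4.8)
The plan is to encode each morphism as a subobject of a fixed product object, and then to invoke the finiteness of subobjects that follows from the atomic decomposition. First I would form the product $P = X \times Y$, which exists as the fiber product $X \times_{\bone} Y$ by Definition~\ref{defn:B}(d). For each morphism $f \colon X \to Y$, the universal property of the product yields the graph map $\gamma_f = (\id_X, f) \colon X \to P$, characterized by $\pi_1 \circ \gamma_f = \id_X$ and $\pi_2 \circ \gamma_f = f$, where $\pi_1$ and $\pi_2$ are the two projections. Since $\pi_1 \circ \gamma_f = \id_X$, the map $\gamma_f$ is a split monomorphism, hence a monomorphism, so it determines a subobject $\Gamma_f$ of $P$.

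Next I would check that the assignment $f \mapsto \Gamma_f$ is injective. Suppose $\Gamma_f = \Gamma_{f'}$ as subobjects of $P$; then there is an isomorphism $\phi \colon X \to X$ with $\gamma_{f'} \circ \phi = \gamma_f$. Composing with $\pi_1$ gives $\phi = \id_X$, and then composing with $\pi_2$ gives $f = f'$. Thus $\Hom(X, Y)$ injects into the set of subobjects of $P$. Finally, by Definition~\ref{defn:B}(b) the object $P$ is isomorphic to a finite co-product of atoms, so Corollary~\ref{cor:Bsub} shows that $P$ has only finitely many subobjects (at most $2^n$ if $P$ is a co-product of $n$ atoms). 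Combining this with the injection above gives that $\Hom(X, Y)$ is finite.

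The crux of the argument is the observation that the graph construction converts the problem of \emph{counting morphisms} into the problem of \emph{counting subobjects}, the latter being immediately controlled by the finite atomic decomposition via Corollary~\ref{cor:Bsub}. The remaining steps—that $\gamma_f$ is a (split) monomorphism and that $f \mapsto \Gamma_f$ is injective—are routine formal verifications. I note that this approach does not require first reducing to the case where $X$ and $Y$ are atoms, although one could alternatively make such a reduction using Proposition~\ref{prop:Bmap}, after which only the finiteness of $\Hom(X_i, Y_j)$ for atoms would remain, handled by the same graph argument applied to the atom $X_i$.
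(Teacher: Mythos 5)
Your proof is correct and follows essentially the same route as the paper's: both encode $f$ by its graph as a subobject of $X \times Y$ and conclude via the finiteness of subobjects from Corollary~\ref{cor:Bsub}. The only cosmetic difference is that you verify injectivity of $f \mapsto \Gamma_f$ directly from the projections, whereas the paper recovers $f$ as $q \circ p^{-1}$ after noting that $X \to \Gamma_f$ is an isomorphism.
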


\begin{proof}
Consider a map $f \colon X \to Y$. Let $\Gamma_f \subset X \times Y$ be the image of $\id_X \times f \colon X \to X \times Y$, and let $p \colon \Gamma_f \to X$ and $q \colon \Gamma_f \to Y$ be the projections. Since $\id_X \times f$ is a monomorphism, it follows from Corollary~\ref{cor:im} that the natural map $X \to \Gamma_f$ is both a monomorphism and an epimorphism, and is thus an isomorphism by Corollary~\ref{cor:balanced}; its inverse is clearly $p$. We thus see that $f=q \circ p^{-1}$, and so $f$ can be recovered from $\Gamma_f$. As $X \times Y$ has only finitely many subobjects (by Corollary~\ref{cor:Bsub}), the result follows.
\end{proof}

\begin{corollary} \label{cor:EI-atom}
Any self-map of an atom is an isomorphism.
\end{corollary}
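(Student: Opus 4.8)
The plan is to deduce this from two earlier facts: that every self-map of an atom is an epimorphism (Proposition~\ref{prop:epicatom}), and that $\Hom(X,X)$ is a finite set (Proposition~\ref{prop:Hom-finite}). The combination of ``every endomorphism is epic'' with finiteness of the endomorphism monoid is exactly the situation in which endomorphisms are forced to be automorphisms, and this is the mechanism I would exploit.

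Concretely, let $f \colon X \to X$ be a self-map of the atom $X$. By Proposition~\ref{prop:epicatom}, $f$ is an epimorphism, and since a composite of epimorphisms is an epimorphism, each power $f^n$ with $n \ge 1$ is an epimorphism as well. Since $\Hom(X,X)$ is finite by Proposition~\ref{prop:Hom-finite}, the list $f, f^2, f^3, \ldots$ cannot consist of pairwise distinct elements, so by the pigeonhole principle there are integers $1 \le i < j$ with $f^i = f^j$. Writing $k = j - i \ge 1$, this reads $f^k \circ f^i = f^i = \id_X \circ f^i$. Because $f^i$ is an epimorphism, I may cancel it on the right to obtain $f^k = \id_X$.

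Finally, $f^k = \id_X$ with $k \ge 1$ exhibits $f^{k-1}$ as a two-sided inverse of $f$ (interpreting $f^0$ as $\id_X$ when $k = 1$), so $f$ is an isomorphism. I do not anticipate a genuine obstacle here; the only points requiring care are that the cancellation is performed on the correct (right) side and that one works with powers $f^i$ for $i \ge 1$, so that epic-ness is available. An alternative route would be to show directly that $f$ is a monomorphism and then invoke the fact that $\cB$ is balanced (Corollary~\ref{cor:balanced}), but the cancellation argument above avoids having to establish injectivity separately.
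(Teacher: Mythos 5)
Your proof is correct, and it rests on exactly the same two inputs as the paper's: Proposition~\ref{prop:epicatom} (any map of atoms is epic) and Proposition~\ref{prop:Hom-finite} (finiteness of $\Hom(X,X)$). The finishing move differs slightly. The paper observes that since $f$ is epic, precomposition $f^* \colon \Hom(X,X) \to \Hom(X,X)$ is injective, hence bijective by finiteness, which produces a left inverse $g$ with $g \circ f = \id_X$; this shows $f$ is a (split) monomorphism, and the paper then invokes Corollary~\ref{cor:balanced} to conclude $f$ is an isomorphism. You instead run the pigeonhole argument on the powers of $f$ and cancel the epimorphism $f^i$ on the right to get $f^k = \id_X$, which hands you a two-sided inverse directly. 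Your route is marginally more self-contained in that it never needs balancedness, at the small cost of checking that powers of epimorphisms are epimorphisms; the paper's route is a one-line application of a structural fact ($f^*$ injective on a finite set) plus a corollary already in hand. Both are complete; your right-cancellation step is performed on the correct side, as you note.
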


\begin{proof}
Let $f \colon X \to X$ be a map with $X$ an atom. Then $f$ is an epimorphism (Proposition~\ref{prop:epicatom}), and so $f^* \colon \Hom(X,X) \to \Hom(X,X)$ is injective. Since $\Hom(X,X)$ is finite (Proposition~\ref{prop:Hom-finite}), it follows that $f^*$ is bijective, and so there exists $g \in \Hom(X,X)$ such $g \circ f = \id_X$. Thus $f$ is a monomorphism, and hence an isomorphism (Corollary~\ref{cor:balanced}).
\end{proof}

\subsection{Orbits} \label{ss:orb}

Suppose $G$ is an admissible group and $X$ is a finitary $G$-set. One can then form the orbit space $G \backslash X$, which is a finite set. Passing to orbits is often an important idea.

There is an analog of this construction in our more general categories. Let $\cB$ be a $\rB_0$-category. We define the \defn{orbit set} of $X$, denoted $X^{\orb}$, to be the set of atomic subobjects of $X$. This construction is natural: it follows from Proposition~\ref{prop:Bmap} that a map $f \colon X \to Y$ naturally induces a function $f^{\orb} \colon X^{\orb} \to Y^{\orb}$. We therefore have a functor
\begin{displaymath}
\cB \to \FinSet, \qquad X \mapsto X^{\orb},
\end{displaymath}
where $\FinSet$ is the category of finite sets. We now show how one can read off some properties of a morphism from how it behaves on orbits.

\begin{proposition} \label{prop:orb-mono}
Suppose $\cB$ is a $\rB$-category and $f \colon X \to Y$ is a morphism.
\begin{enumerate}
\item $f$ is epimorphic if and only if $f^{\orb}$ is surjective.
\item $f$ is monomorphic if and only if $X^{\orb} \to (X \times_Y X)^{\orb}$ is surjective (or bijective); in this case, $f^{\orb}$ is injective.
\end{enumerate}
\end{proposition}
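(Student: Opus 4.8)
The plan is to reduce both assertions to the combinatorial description of morphisms from Proposition~\ref{prop:Bmap} and their characterization in Proposition~\ref{prop:Bmonic}. Write $X = X_1 \amalg \cdots \amalg X_n$ and $Y = Y_1 \amalg \cdots \amalg Y_m$ with all summands atomic, and let $a \colon [n] \to [m]$ and $f_i \colon X_i \to Y_{a(i)}$ be as in Proposition~\ref{prop:Bmap}. The first step is to identify the orbit set explicitly: by Corollary~\ref{cor:Bsub} the subobjects of $X$ are the $X_S$ with $S \subseteq [n]$, and among these the atomic ones are exactly the singletons $X_{\{i\}} = X_i$; hence $X^{\orb}$ is canonically in bijection with $[n]$, and likewise $Y^{\orb} \cong [m]$.

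The key preliminary claim, which I would establish next, is that under these identifications the induced map $f^{\orb}$ is precisely the function $a$. Indeed, $f^{\orb}$ sends an atomic subobject of $X$ to its image in $Y$; the restriction of $f$ to $X_i$ factors through $Y_{a(i)}$ (Proposition~\ref{prop:Bmap}), and since $X_i$ and $Y_{a(i)}$ are atoms this restriction is epimorphic (Proposition~\ref{prop:epicatom}), so by Corollary~\ref{cor:im} its image is all of $Y_{a(i)}$. Thus $f^{\orb}(X_i) = Y_{a(i)}$, i.e.\ $f^{\orb} = a$. An immediate corollary worth recording is that a monomorphism always induces an \emph{injective} map on orbit sets, since $a$ is injective whenever $f$ is monic (Proposition~\ref{prop:Bmonic}(b)).

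With this dictionary, part (a) is immediate: $f$ is epimorphic iff $a$ is surjective (Proposition~\ref{prop:Bmonic}(a)), which is exactly surjectivity of $f^{\orb}$. For part (b) I would invoke Proposition~\ref{prop:mono}: $f$ is monomorphic iff the diagonal $\Delta \colon X \to X \times_Y X$ is epimorphic. Applying part (a) to $\Delta$, this is equivalent to surjectivity of $\Delta^{\orb} \colon X^{\orb} \to (X \times_Y X)^{\orb}$, which is the stated criterion. For the parenthetical ``(or bijective),'' note that $\Delta$ is always a monomorphism, so by the remark above $\Delta^{\orb}$ is always injective; hence it is surjective precisely when it is bijective. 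Finally, if $f$ is monic then $f^{\orb} = a$ is injective by Proposition~\ref{prop:Bmonic}(b), giving the last clause.

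The only genuine content is the identification $f^{\orb} = a$; once it is in place, both parts are formal consequences of the structure theorems already proved. The one point needing a little care is the claim that the image of an atomic summand $X_i$ under $f$ is the whole atomic subobject $Y_{a(i)}$, but this is exactly what Proposition~\ref{prop:epicatom} and Corollary~\ref{cor:im} provide.
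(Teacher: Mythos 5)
Your proposal is correct and follows essentially the same route as the paper: part (a) is read off from Proposition~\ref{prop:Bmonic}(a), and part (b) reduces to applying part (a) to the diagonal $\Delta \colon X \to X \times_Y X$ via Proposition~\ref{prop:mono}. The only difference is that you spell out the identification $f^{\orb} = a$ (which the paper treats as implicit in the definition of $f^{\orb}$), and you handle the parenthetical ``(or bijective)'' by noting $\Delta^{\orb}$ is always injective, whereas the paper notes $\Delta$ is an isomorphism when $f$ is monic; both are fine.
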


\begin{proof}
(a) follows from Proposition~\ref{prop:Bmonic}(a). We now prove (b). Let $\Delta \colon X \to X \times_Y X$ be the diagonal. If $f$ is monomorphic then $\Delta$ is an isomorphism (Proposition~\ref{prop:mono}), and so $\Delta^{\orb}$ is a bijection; conversely, if $\Delta^{\orb}$ is surjective then $\Delta$ is epimorphic by (a), and so $f$ is monomorphic (Proposition~\ref{prop:mono}). If $f$ is monomorphic then $f^{\orb}$ is injective by Proposition~\ref{prop:Bmonic}(b).
\end{proof}

\begin{remark}
Let $\cB$ be a $\rB_1$-category. One can sometimes modify $\cB$ to produce a $\rB$-category, as we now describe. Let $f \colon X \to Y$ be a morphism in $\cB$. We make the following definitions:
\begin{itemize}
\item $f$ is a \defn{pre-monomorphism} if the map $X^{\orb} \to (X \times_Y X)^{\orb}$ is bijective.
\item $f$ is a \defn{pre-epimorphism} if the map $X^{\orb} \to Y^{\orb}$ is surjective.
\item $f$ is a \defn{pre-isomorphism} if it is a pre-monomorphism and pre-isomorphism.
\end{itemize}
Suppose that the class of pre-isomorphisms is stable under base change. Then this class forms a right multiplicative system, as defined in \stacks{04VC}. The localized category is a $\rB$-category, and is the universal $\rB$-category to which $\cB$ maps (with respect to functors that preserve finite co-products, finite limits, and atoms).
\end{remark}

\section{Pre-Galois categories} \label{s:pre-galois}

In this section, we identify a few categorical properties of $\bS(G)$ that need not hold for a general $\rB$-category, the most important of which is non-degeneracy. Motivated by these observations, we introduce the class of pre-Galois categories. We also discuss how they relate to the existing notion of Galois category. All categories in this section are assumed to be essentially small.

\subsection{Non-degeneracy}

We begin with the following observation.

\begin{proposition} \label{prop:nondegen}
Let $\cB$ be a $\rB_1$-category. The following are equivalent:
\begin{enumerate}
\item If $X \to Z$ and $Y \to Z$ are maps of atoms then $X \times_Z Y$ is non-empty.
\item A base change of an epimorphism is an epimorphism.
\item A product of epimorphisms is an epimorphism.
\end{enumerate}
\end{proposition}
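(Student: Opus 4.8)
The plan is to prove the cyclic chain of implications (a) $\Rightarrow$ (b) $\Rightarrow$ (c) $\Rightarrow$ (a). The easiest link is (b) $\Rightarrow$ (c). Given epimorphisms $f\colon X\to X'$ and $g\colon Y\to Y'$, I factor their product as $f\times g=(f\times\id_{Y'})\circ(\id_X\times g)$. The map $\id_X\times g$ is the base change of $g$ along the projection $X\times Y'\to Y'$, since $(X\times Y')\times_{Y'}Y\cong X\times Y$ with the structure map being exactly $\id_X\times g$; symmetrically $f\times\id_{Y'}$ is the base change of $f$ along $X'\times Y'\to X'$. By (b) each factor is an epimorphism, and a composite of epimorphisms is an epimorphism, so $f\times g$ is one as well.

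For (a) $\Rightarrow$ (b) I first reduce to an atomic base. Given an epimorphism $f\colon X\to B$ and an arbitrary $g\colon B'\to B$, I decompose $B=\coprod_k B_k$, $B'=\coprod_l B'_l$, $X=\coprod_i X_i$ into atoms and invoke distributivity of fiber products over co-products (Proposition~\ref{prop:B0-fiber}). Each $X_i$ factors through a unique $B_{k(i)}$ and each $B'_l$ through a unique $B_{m(l)}$, and a cross term $X_i\times_B B'_l$ is empty unless $k(i)=m(l)$, since a map from an atom into $B$ lands in a single summand. Thus the base change is the co-product over $l$ of the maps $\big(\coprod_{k(i)=m(l)}X_i\times_{B_{m(l)}}B'_l\big)\to B'_l$, and by Proposition~\ref{prop:monic-sum} it suffices to treat each separately; each is the base change of the epimorphism $\coprod_{k(i)=m(l)}X_i\to B_{m(l)}$ (a summand of $f$, hence epic by Proposition~\ref{prop:monic-sum}) along a map of atoms. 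So I may assume $B=Z$ and $B'=Z'$ are atoms, and condition (a) then guarantees every $X_i\times_Z Z'$ is non-empty, so the base change is at least surjective on orbit sets.

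For (c) $\Rightarrow$ (a) I argue by contraposition: assuming $X\times_Z Y=\bzero$ for maps of atoms $u\colon X\to Z$ and $v\colon Y\to Z$, I seek a product of epimorphisms that is not epic. The guiding observation is that $X\times_Z Y$ is the pullback of $u\times v\colon X\times Y\to Z\times Z$ along the diagonal $\Delta_Z\colon Z\to Z\times Z$, so its emptiness says precisely that the atomic summand of $Z\times Z$ containing the image of $\Delta_Z$ is missed by $u\times v$. Since an epimorphism is surjective on orbits—a consequence of Proposition~\ref{prop:monic-sum} together with the fact that $\bzero\to Y_j$ is never epic for an atom $Y_j$ (the two inclusions $Y_j\rightrightarrows Y_j\amalg Y_j$ are distinct by Definition~\ref{defn:B}(c))—such a missed summand witnesses the failure of epimorphy.

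The hard part, in both (a) $\Rightarrow$ (b) and (c) $\Rightarrow$ (a), is that we work only in a $\rB_1$-category, where axiom (e) is unavailable; consequently a map of atoms need not be epic and epimorphisms are \emph{not} detected by the orbit functor, so the surjectivity-on-orbits produced above does not by itself yield an epimorphism. In (a) $\Rightarrow$ (b) the genuine content is to transfer the \emph{joint} epimorphy of the family $\{X_i\to Z\}$ through base change: for a pair $p,q\colon Z'\to W$ equalized by the base-changed family, setting $Z'_0=\Eq(p,q)$ one checks that each projection factors through $Z'_0$, so that $\coprod_i(X_i\times_Z Z'_0)\to\coprod_i(X_i\times_Z Z')$ is an isomorphism; the crux is then to show that condition (a) forces $Z'_0=Z'$, hence $p=q$. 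Dually, in (c) $\Rightarrow$ (a) the obstacle is that the obvious witness $u\times v$ requires $u$ and $v$ to be epic, which is exactly what a $\rB_1$-category may lack, so producing a genuine non-epic product of epimorphisms is the delicate step. I expect these joint-epimorphy arguments, rather than the orbit bookkeeping, to carry the real weight of the proof.
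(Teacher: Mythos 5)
Your (b) $\Rightarrow$ (c) is complete and is exactly the paper's argument, and your reduction of (a) $\Rightarrow$ (b) to an atomic base via Proposition~\ref{prop:B0-fiber} also matches the paper. But the proposal stops there: for (a) $\Rightarrow$ (b) you produce only surjectivity on orbit sets and explicitly leave open why this yields an epimorphism, and for (c) $\Rightarrow$ (a) you likewise leave the construction of a non-epic product of epimorphisms unresolved. The paper closes both steps by freely using that any map of atoms is epimorphic: in (a) $\Rightarrow$ (b), once $X$, $Y$, $Y'$ are atoms, the fiber product $X' = X \times_Y Y'$ is non-empty by (a), so an atomic summand of $X'$ maps epimorphically onto the atom $Y'$ and hence so does $X'$; in (c) $\Rightarrow$ (a), for maps of atoms $X \to Y$ and $Y' \to Y$ the map $X \to Y$ is an epimorphism, the hypothesis makes its base change $X \times_Y Y' \to Y'$ an epimorphism, and an epimorphism onto an atom cannot have empty source (the two coprojections $Y' \rightrightarrows Y' \amalg Y'$ are distinct but agree on $\bzero$). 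You are right that ``any map of atoms is epimorphic'' is Proposition~\ref{prop:epicatom}, whose proof uses axiom (e), and that the paper invokes it here without comment even though the statement is for $\rB_1$-categories; that is a fair observation about the exposition, but flagging the difficulty is not the same as resolving it, and as submitted two of your three implications are unproved. Your $\Eq(p,q)$ sketch reduces to the same unresolved point (a non-empty subobject of an atom need not be everything without axiom (e)), so it does not advance matters.

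There is also a concrete error in your plan for (c) $\Rightarrow$ (a). You assert that $X \times_Z Y = \bzero$ ``says precisely'' that $u \times v$ misses the atomic summand $W$ of $Z \times Z$ containing the image of $\Delta_Z$. Only one direction of this is true. If some atom $T$ of $X \times Y$ does map to $W$, the corresponding piece of $X \times_Z Y$ is $T \times_W Z$, and concluding that \emph{this} is non-empty is itself an instance of condition (a) --- exactly the statement whose failure you are assuming. So $u \times v$ can be surjective on orbits while $X \times_Z Y$ is empty, and your proposed witness does not establish $\neg(\mathrm{a}) \Rightarrow \neg(\mathrm{c})$. The paper avoids the diagonal of $Z$ altogether and works directly with the fiber product $X \times_Y Y'$ and its projection to $Y'$, as described above.
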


\begin{proof}
(a) $\Rightarrow$ (b). Let $f \colon X \to Y$ be an epimorphism, let $Y' \to Y$ be an arbitrary map, and let $f' \colon X' \to Y'$ be the base change of $f$. We show that $f'$ is an epimorphism. Since fiber products distribute over co-products, it suffices to treat the case where $X$, $Y$, and $Y'$ are atoms. By assumption, $X'$ is then non-empty, and so $f'$ is an epimorphism.

(b) $\Rightarrow$ (c). Let $X \to Y$ and $X' \to Y'$ be epimorphisms. Consider the composition
\begin{displaymath}
X \times X' \to Y \times X' \to Y \times Y'.
\end{displaymath}
The first map is the base change of the epimorphism $X \to Y$ along the map $X' \to \bone$, and is thus an epimorphism; similarly, the second map is the base change of the epimorphism $X' \to Y'$ along the map $Y \to \bone$, and is thus an epimorphism. It follows that the composition $X \times X' \to Y \times Y'$ is an epimorphism, as required.

(c) $\Rightarrow$ (a). Let $X \to Y$ and $Y' \to Y$ be maps of atoms, and let $X'=X \times_Y Y'$ be the fiber product. Since $X \to Y$ is an epimorphism, by assumption $X' \to Y'$ is also an epimorphism. Thus $X'$ is non-empty.
\end{proof}

Motivated by the above proposition, we make the following definition.

\begin{definition} \label{defn:non-degen}
A $\rB_1$-category is \defn{non-degenerate} if the equivalent conditions of Proposition~\ref{prop:nondegen} hold, and the final object $\bone$ is atomic.
\end{definition}

It is clear that the category $\bS(G; \sE)$ is non-degenerate, for any admissible group $G$ and stabilizer class $\sE$. In Example~\ref{ex:separable}, we give an interesting example of a degenerate $\rB$-category.

\subsection{Implications of non-degeneracy}

Fix a non-degenerate $\rB$-category $\cB$. We now examine some consequences of the non-degeneracy condition. We note that these results can be deduced from the classification of such categories (provided by Theorem~\ref{thm:fraisseB}), but we find it instructive to give direct proofs. For a morphism $f \colon X \to Y$, we define the \defn{kernel pair} of $f$ to be $\Eq(f)=X \times_Y X$. It is a subobject of $X \times X$.

\begin{proposition} \label{prop:non-degen-factor}
Let $f \colon X \to Y$ and $g \colon X \to Z$ be epimorphisms. Then $f$ factors through $g$ if and only if $\Eq(g) \subset \Eq(f)$.
\end{proposition}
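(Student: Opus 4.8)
The plan is to treat the two implications separately; the converse is where the real work lies.

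For the ``only if'' direction, suppose $f = h \circ g$ for some $h \colon Z \to Y$. Writing $\pi_1, \pi_2 \colon \Eq(g) \to X$ for the projections of the kernel pair, we have $g\pi_1 = g\pi_2$ and hence $f\pi_1 = hg\pi_1 = hg\pi_2 = f\pi_2$. The universal property of $\Eq(f) = X \times_Y X$ then yields a map $\Eq(g) \to \Eq(f)$ compatible with the projections to $X$; composing with $\Eq(f) \hookrightarrow X \times X$ recovers the monomorphism $\Eq(g) \hookrightarrow X \times X$. Thus $\Eq(g)$ factors through $\Eq(f)$ as subobjects of $X \times X$, which by Corollary~\ref{cor:Bsub} is exactly the statement $\Eq(g) \subset \Eq(f)$.

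For the ``if'' direction I would construct $h$ from the image of $(g,f)$. Set $W = \im\big((g,f) \colon X \to Z \times Y\big)$, let $e \colon X \to W$ be the canonical epimorphism (Corollary~\ref{cor:im}), and let $p \colon W \to Z$ and $q \colon W \to Y$ be the two projections restricted to $W$, so that $pe = g$ and $qe = f$. If $p$ is an isomorphism then $h := qp^{-1}$ satisfies $hg = qp^{-1}pe = qe = f$, as desired. Since $\cB$ is balanced (Corollary~\ref{cor:balanced}), it suffices to show $p$ is simultaneously epic and monic. Epicness is immediate, since $g = pe$ is an epimorphism and therefore $p$ is one too.

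The crux is the monicity of $p$, and this is the only step using both the hypothesis and non-degeneracy. First I would compute the kernel pair of $e$: because $W \hookrightarrow Z \times Y$ is monic, $\Eq(e) = X \times_{Z\times Y} X = \Eq(g) \cap \Eq(f)$ as subobjects of $X \times X$, and this equals $\Eq(g)$ by hypothesis. By Proposition~\ref{prop:mono} it then suffices to show the diagonal $\Delta \colon W \to W \times_Z W$ is an epimorphism. To see this, consider the map $\Eq(g) = X \times_Z X \to W \times_Z W$ induced by $e$ on both factors. It is the composite $X \times_Z X \to W \times_Z X \to W \times_Z W$ of two base changes of the epimorphism $e$, hence an epimorphism by Proposition~\ref{prop:nondegen}(b). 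On the other hand, since $\Eq(g) = \Eq(e)$, the two projections of this kernel pair become equal after composing with $e$, so the map above factors as $\Delta \circ (e\pi_1)$. An epimorphism that factors through $\Delta$ forces $\Delta$ to be an epimorphism, and a balanced category makes $\Delta$ an isomorphism; hence $p$ is monic.

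The main obstacle is exactly this monicity argument. The delicate points are recognizing $\Eq(e)$ as $\Eq(g) \cap \Eq(f)$ (so the hypothesis can be applied), and expressing the induced map on kernel pairs as a composite of base changes so that non-degeneracy delivers the epimorphism factoring through $\Delta$. Everything else is formal.
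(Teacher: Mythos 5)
Your proposal is correct and follows essentially the same route as the paper: form the image of $X$ in the product of the two targets, observe that its kernel pair is $\Eq(f)\cap\Eq(g)=\Eq(g)$, and use non-degeneracy to push an epimorphism through the diagonal of the relevant fiber product, forcing the projection to $Z$ to be monic and hence (being also epi) invertible. The only cosmetic difference is that you realize the map $X\times_Z X \to W\times_Z W$ as a composite of two base changes of $e$, whereas the paper views it as a single base change of $X\times X \to I\times I$ along the diagonal $Z\to Z\times Z$; both are valid applications of Proposition~\ref{prop:nondegen}.
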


\begin{proof}
It is clear that if $f$ factors through $g$ then $\Eq(g) \subset \Eq(f)$. We now prove the converse; thus assume $\Eq(g) \subset \Eq(f)$. Let $I$ be the image of $X$ in $Y \times Z$, and let $h \colon X \to I$, $p \colon I \to Y$, and $q \colon I \to Z$ be the natural maps; note that $f=p \circ h$ and $g=q \circ h$. We have
\begin{displaymath}
\Eq(h)=\Eq(f \times g) = \Eq(f) \cap \Eq(g)=\Eq(g),
\end{displaymath}
where $f \times g$ denotes the map $X \to Y \times Z$. Consider the commutative diagram
\begin{displaymath}
\xymatrix{
X \times_I X \ar[r] \ar[d] & X \times_Z X \ar[d] \\
I \ar[r] & I \times_Z I  }
\end{displaymath}
The top map is the inclusion $\Eq(h) \subset \Eq(g)$, which is an isomorphism. The right map is an epimorphism since $h$ is an epimorphism and the category $\cB$ is non-degenerate; to be a little more precise, note that this morphism is the base change of $X \times X \to I \times I$ along the diagonal $Z \to Z \times Z$. It follows that the bottom map is an epimorphism, and so $q$ is an monomorphism (Proposition~\ref{prop:mono}), and thus an isomorphism (Corollary~\ref{cor:balanced}). We thus have $f=p \circ q^{-1} \circ g$, which completes the proof.
\end{proof}

\begin{corollary} \label{cor:regular-epi}
Let $f \colon X \to Y$ be an epimorphism. Then $f$ is the co-equalizer of the natural maps $X \times_Y X \rightrightarrows X$.
\end{corollary}

\begin{proof}
Let $p_i \colon X \times_Y X \to X$ for $i=1,2$ be the projection maps. Suppose that $g \colon X \to Z$ is a morphism such that $gp_1=gp_1$. We show that $g$ factors through $f$. Replacing $Z$ with the image of $g$, we may assume that $g$ is an epimorphism. The hypothesis on $g$ implies $\Eq(g) \subset \Eq(f)$, and so the result follows from the proposition.
\end{proof}

\begin{corollary} \label{cor:finite-quot}
For $X$ fixed, there are finitely many epimorphisms $X \to Y$ up to isomorphism.
\end{corollary}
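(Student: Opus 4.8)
The plan is to reduce the finiteness of epimorphisms out of $X$ to the finiteness of the subobject poset of $X \times X$, which we already control. The bridge is the kernel pair. The product $X \times X = X \times_{\bone} X$ exists since $\cB$ has a final object and fiber products, and to each epimorphism $f \colon X \to Y$ I attach its kernel pair $\Eq(f) = X \times_Y X$, viewed as a subobject of $X \times X$. I claim the assignment $f \mapsto \Eq(f)$ descends to an \emph{injection} from isomorphism classes of epimorphisms out of $X$ into the set of subobjects of $X \times X$. Well-definedness on isomorphism classes is routine: if $\alpha \colon Y \to Y'$ is an isomorphism with $f' = \alpha \circ f$, then a pair of maps into $Y$ agree if and only if their composites with $\alpha$ agree, so $X \times_Y X$ and $X \times_{Y'} X$ coincide as subobjects of $X \times X$.

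The heart of the argument is injectivity, and this is where Proposition~\ref{prop:non-degen-factor} does all the work. Suppose $f \colon X \to Y$ and $g \colon X \to Z$ are epimorphisms with $\Eq(f) = \Eq(g)$ as subobjects of $X \times X$. Then both containments $\Eq(g) \subset \Eq(f)$ and $\Eq(f) \subset \Eq(g)$ hold, so Proposition~\ref{prop:non-degen-factor} produces maps $\phi \colon Z \to Y$ and $\psi \colon Y \to Z$ with $f = \phi \circ g$ and $g = \psi \circ f$. Substituting gives $f = \phi \circ \psi \circ f$ and $g = \psi \circ \phi \circ g$; since $f$ and $g$ are epimorphisms, these force $\phi \circ \psi = \id_Y$ and $\psi \circ \phi = \id_Z$. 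Thus $\phi$ is an isomorphism intertwining $f$ and $g$, i.e.\ they represent the same isomorphism class of epimorphism under $X$.

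Finally I would invoke Corollary~\ref{cor:Bsub}: writing $X \times X$ as a finite co-product of atoms, its subobjects are exactly the sub-co-products indexed by subsets of the (finite) index set, hence finite in number. Since the isomorphism classes of epimorphisms out of $X$ inject into this finite set, there are only finitely many, as claimed. I do not expect a serious obstacle: the one point requiring care is that equality of kernel pairs forces an isomorphism of targets, and Proposition~\ref{prop:non-degen-factor} reduces precisely this to the two subobject containments, after which the cancellation of epimorphisms is formal.
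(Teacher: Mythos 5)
Your proposal is correct and follows exactly the paper's own argument: attach to each epimorphism its kernel pair as a subobject of $X \times X$, use Proposition~\ref{prop:non-degen-factor} (applied in both directions) to see this determines the epimorphism up to isomorphism, and conclude by the finiteness of subobjects from Corollary~\ref{cor:Bsub}. You have merely spelled out the injectivity step that the paper leaves implicit.
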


\begin{proof}
By the proposition, an epimorphism $f \colon X \to Y$ is determined up to isomorphism by $\Eq(f)$, which is a subobject of $X \times X$. Since $X \times X$ has finitely many subobjects (Corollary~\ref{cor:Bsub}), the result follows.
\end{proof}

\begin{proposition} \label{prop:B-co-compl}
A non-degenerate $\rB$-category $\cB$ is finitely co-complete.
\end{proposition}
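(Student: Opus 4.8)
The plan is to reduce finite co-completeness to the existence of co-equalizers. Since $\cB$ already has finite co-products (Definition~\ref{defn:B}(a)) and a final object, it is a standard fact that $\cB$ will be finitely co-complete once we produce, for every parallel pair $f,g \colon X \to Y$, a co-equalizer: pushouts are co-equalizers of the two maps into a co-product, and every finite colimit is built from finite co-products and co-equalizers. So the entire content is the construction of co-equalizers.

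The main obstacle is that we cannot simply form the quotient of $Y$ by the equivalence relation generated by $f$ and $g$: we are not assuming Definition~\ref{defn:galois-intro}(h), so equivalence relations need not be effective, and there is no quotient available to build. Instead I would construct the co-equalizer indirectly, using that co-equalizer maps are epimorphisms together with two facts already in hand: Corollary~\ref{cor:finite-quot}, that there are only finitely many epimorphisms out of $Y$ up to isomorphism; and Proposition~\ref{prop:non-degen-factor}, that one epimorphism out of $Y$ factors through another, $q$, precisely when $\Eq(q)$ is contained in its kernel pair.

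Concretely, I would first observe that an epimorphism $q \colon Y \to Q$ satisfies $qf = qg$ if and only if $R_0 := \im\bigl((f,g) \colon X \to Y \times Y\bigr) \subseteq \Eq(q)$, since $(f,g)$ factors through the subobject $\Eq(q) \subseteq Y \times Y$ exactly when $qf=qg$. By Corollary~\ref{cor:finite-quot} there are, up to isomorphism, only finitely many epimorphisms $q_1, \dots, q_r$ out of $Y$ with $q_i f = q_i g$, and at least one exists (the map $Y \to \bone$, whose kernel pair is all of $Y \times Y$). Finite products exist because $A \times B = A \times_{\bone} B$, so I can form $\phi = (q_1, \dots, q_r) \colon Y \to Q_1 \times \cdots \times Q_r$ and let $q_* \colon Y \to Q$ be its image, an epimorphism (Corollary~\ref{cor:im}). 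Since the kernel pair of a map agrees with that of its epimorphic part (pullback along the monomorphism $Q \hookrightarrow \prod_i Q_i$ is trivial by Proposition~\ref{prop:mono}), one gets $\Eq(q_*) = \Eq(\phi) = \bigcap_i \Eq(q_i)$; in particular $R_0 \subseteq \Eq(q_*)$, so $q_* f = q_* g$.

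Finally I would check that $q_*$ is the co-equalizer. Given any $h \colon Y \to W$ with $hf = hg$, factor $h = \iota \circ h_0$ with $h_0 \colon Y \to \im(h)$ epimorphic and $\iota$ monomorphic (Corollary~\ref{cor:im}); then $h_0 f = h_0 g$, so $h_0$ is isomorphic to some $q_i$ and hence $\Eq(q_*) = \bigcap_j \Eq(q_j) \subseteq \Eq(q_i) = \Eq(h_0)$. Proposition~\ref{prop:non-degen-factor} then supplies $\overline{h_0} \colon Q \to \im(h)$ with $h_0 = \overline{h_0} q_*$, and $\overline{h} = \iota \overline{h_0}$ gives the factorization $h = \overline{h} q_*$; uniqueness of $\overline{h}$ is immediate since $q_*$ is epimorphic. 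The only steps needing genuine care are the bookkeeping with image factorizations and the identity $\Eq(q_*) = \bigcap_i \Eq(q_i)$ (both resting on the behavior of kernel pairs under monomorphisms and products); everything else is formal.
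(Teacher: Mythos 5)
Your proof is correct and follows essentially the same route as the paper's: both reduce to co-equalizers, take the finitely many epimorphisms $q_i$ out of $Y$ co-equalizing $f$ and $g$ (via Corollary~\ref{cor:finite-quot}), define the co-equalizer as the image of $Y \to \prod_i Q_i$, and verify the universal property by factoring an arbitrary $h$ with $hf=hg$ through its image. The only cosmetic difference is that the paper obtains the factorization directly by composing with the projection $I \to Z_i$, whereas you route it through the kernel-pair criterion of Proposition~\ref{prop:non-degen-factor}; both are valid.
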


\begin{proof}
Since $\cB$ has finite co-products, it suffices to show that it has co-equalizers. Let $f,g \colon X \to Y$ be parallel morphisms. Let $\{ q_i \colon Y \to Z_i \}_{i \in U}$ be representatives of the isomorphism classes of epimorphisms out of $Y$; this set is finite by Corollary~\ref{cor:finite-quot}. Let $V$ be the set of indices $i \in U$ such that $q_i \circ f = q_i \circ g$. Define $I$ to be the image of the map $Y \to \prod_{i \in V} Z_i$, and let $h \colon Y \to I$ be the natural map. We claim that $h$ is a co-equalizer of $(f,g)$.

To see this, suppose that $a \colon Y \to T$ is a morphism with $a \circ f = a \circ g$. The morphism $a$ factors as $c \circ b$, where $b$ is an epimorphism and $c$ is a monomorphism; we may as well assume $b=q_i$ for some $i \in U$. Since $c$ is a monomorphism, it follows that $q_i \circ f = q_i \circ g$, and so $i \in V$. Let $p_i \colon I \to Z_i$ be the projection onto the $i$th factor, so that $q_i=p_i \circ h$. Composing with $c$, we have $a=c \circ p_i \circ h$. We thus see that $a$ factors through $h$. The factorization is unique since $h$ is an epimorphism.
\end{proof}

\begin{remark}
The above proof actually shows that any $\rB$-category satisfying Corollary~\ref{cor:finite-quot} is finitely co-complete. All $\rB$-categories we know (including the degenerate ones) satisfy this corollary.
\end{remark}

\subsection{Effective equivalence relations}

Let $G$ be an admissible group and let $\sE$ be a stabilizer class. By Proposition~\ref{prop:B-co-compl}, the category $\bS(G; \sE)$ is finitely co-complete. This is somewhat surprising, since every smooth $G$-set is a quotient of some $\sE$-smooth $G$-set. The explanation here is that co-equalizers in $\bS(G; \sE)$ do not agree with co-equalizers in $\bS(G)$. In fact, $\bS(G; \sE)$ is a reflective subcategory of $\bS(G)$, and co-equalizers in $\bS(G; \sE)$ are obtained by computing in $\bS(G)$ and then applying the reflector. We now give an example to illustrate the situation.

\begin{example} \label{ex:ineffective}
Let $G=\fS$ be the infinite symmetric group acting on $\Omega=\{1,2,\ldots\}$. Let $\Omega^{[2]}$ be the subset of $\Omega^2$ consisting of pairs $(x,y)$ with $x \ne y$ and let $\Omega^{(2)}$ be the set of 2-element subsets of $\Omega$. Let $p \colon \Omega^{[2]} \to \Omega^{(2)}$ be the natural surjection, and let $R=\Eq(p)$ be the kernel-pair of $p$. In the category $\bS(G)$, the co-equalizer of $R \rightrightarrows \Omega^{[2]}$ is $\Omega^{(2)}$.

Now, let $\sE$ be the stabilizer class consisting of subgroups conjugate to some $\fS(n)$, as in Example~\ref{ex:sym-stab-class}. The $G$-sets $\Omega^{[2]}$ and $R$ are $\sE$-smooth, while $\Omega^{(2)}$ is not. The reflector $\Phi \colon \bS(G) \to \bS(G; \sE)$ is computed on transitive $G$-sets by $\Phi(G/U)=G/V$, where $V$ is the minimal open subgroup over $U$ that belongs to $\sE$ (it is not difficult to see directly that such a subgroup exists). We have $\Omega^{(2)} \cong G/U$, where $U=\fS_2 \times \fS(2)$. From the classification of open subgroups of $\fS$ (see, e.g., \cite[Proposition~15.1]{repst}), we see that the only subgroup in $\sE$ containing $U$ is $\fS$ itself. Thus $\Phi(\Omega^{(2)})=\bone$ is the one-point set, and this is the co-equalizer of $R \rightrightarrows \Omega^{[2]}$ in the category $\bS(\fS; \sE)$.
\end{example}

The following terminology is useful for explaining this situation:

\begin{definition} \label{defn:eff-eq}
Let $\cC$ be a finitely complete category. We say that an equivalence relation $R$ on an object $X$ is \defn{effective} if the quotient $X/R$ exists (this is defined as the co-equalizer of $R \rightrightarrows X$), and the kernel pair of the quotient map $X \to X/R$ is $R$ itself. We say that $\cC$ \defn{has effective equivalence relations} if all equivalence relations in $\cC$ are effective.
\end{definition}

With this terminology, Example~\ref{ex:ineffective} can be summarized as follows: $R$ is an effective equivalence relation in $\bS(\fS)$, but not in the subcategory $\bS(\fS; \sE)$. The following proposition gives the general statement in this direction.

\begin{proposition} \label{prop:ad-eff-eq}
Let $G$ be an admissible group and let $\sE$ be a stabilizer class.
\begin{enumerate}
\item The category $\bS(G)$ has effective equivalence relations.
\item If $\bS(G; \sE)$ has effective equivalence relations then $\bS(G; \sE)=\bS(G)$, i.e., $\sE$ contains all open subgroups of $G$.
\end{enumerate}
\end{proposition}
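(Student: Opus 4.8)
The plan is to handle the two parts separately: (a) is essentially a verification that the naive quotient works, while (b) is the substantive claim, proved contrapositively by exhibiting a non-effective equivalence relation when $\sE$ is deficient.

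For (a), I would first note that a categorical equivalence relation $R$ on an object $X$ of $\bS(G)$ is nothing more than a $G$-stable equivalence relation on the underlying set of $X$: the mono $R \hookrightarrow X \times X$ is a $G$-invariant subset, and reflexivity, symmetry, and transitivity (phrased via the diagonal, the swap, and the span $R \times_X R \to X \times X$) unwind to the usual set-theoretic conditions. Given such an $R$, I would form the quotient set $X/R$ with its induced $G$-action and check that it lies in $\bS(G)$: it is finitary because $X \to X/R$ is an equivariant surjection and $X$ has finitely many orbits, and it is smooth because the stabilizer of a class $[x]$ contains the open stabilizer of $x$ and is hence open. The quotient map $q \colon X \to X/R$ is then the co-equalizer of $R \rightrightarrows X$ (the universal property for equivariant maps constant on $R$-classes is verified inside $\bS(G)$, since $X/R$ is already an object of it), and its kernel pair is $\{(x,x') : [x]=[x']\}=R$. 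Thus $R$ is effective.

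For (b), I would argue contrapositively: assuming $\sE$ omits some open subgroup, I produce a non-effective equivalence relation in $\bS(G;\sE)$. Fix an open $U \notin \sE$, and use that $\sE$ is a neighborhood basis of the identity to choose $V \in \sE$ with $V \subseteq U$. Let $q \colon G/V \to G/U$ be the natural projection and $R = \Eq(q)$ its kernel pair, a subobject of $(G/V)\times(G/V)$. Since $\sE$ is closed under conjugation and finite intersection, $(G/V)\times(G/V)$ is $\sE$-smooth, hence so is the invariant subset $R$; thus $R$ is an equivalence relation on $G/V$ living in $\bS(G;\sE)$.

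The crux is to compute the quotient of $R$ \emph{inside} $\bS(G;\sE)$ and to see that its kernel pair strictly contains $R$. Here I would use that $\bS(G;\sE)$ is reflective in $\bS(G)$ with reflector $\Phi$, so co-equalizers in $\bS(G;\sE)$ are obtained by applying $\Phi$ to co-equalizers in $\bS(G)$. By part (a), the co-equalizer of $R \rightrightarrows G/V$ in $\bS(G)$ is $G/U$, whence the co-equalizer in $\bS(G;\sE)$ is $\Phi(G/U)=G/W$, where $W$ is the minimal subgroup of $\sE$ containing $U$; such a $W$ exists because Roelcke pre-compactness gives only finitely many open subgroups above $U$ and $\sE$ is intersection-closed. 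Since $U \notin \sE$ we have $W \supsetneq U$, so choosing $w \in W \setminus U$ yields an off-diagonal pair $(V, wV)$ lying in $\Eq(r)$, for $r \colon G/V \to G/W$ the quotient map, but not in $R$. Hence the kernel pair of $r$ strictly contains $R$, so $R$ is not effective, and the hypothesis is contradicted. I expect this final step — the reliance on the explicit reflector $\Phi$ and on the existence of the minimal overgroup $W$ — to be the main obstacle; everything else is routine.
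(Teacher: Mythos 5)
Your proof is correct, and part (a) matches the paper's (the paper simply notes that finite limits and colimits in $\bS(G)$ are computed on underlying sets, so effectiveness is inherited from $\Set$; your version spells this out). For part (b) you take a genuinely different route. You argue contrapositively and actually \emph{compute} the co-equalizer of $R \rightrightarrows G/V$ in $\bS(G;\sE)$, using that $\bS(G;\sE)$ is reflective in $\bS(G)$ with reflector $\Phi(G/U)=G/W$ for $W$ the minimal member of $\sE$ over $U$; you then exhibit an explicit off-diagonal pair in $\Eq(r)\setminus R$. This requires the extra infrastructure you flag: reflectivity, and the existence of $W$ (which your appeal to Roelcke pre-compactness does deliver, since every subgroup containing $U$ is a union of $(U,U)$-double cosets, so there are only finitely many, and $\sE$ is intersection-closed and contains $G$). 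The paper avoids all of this by arguing directly: assuming effectiveness, the quotient $\pi'\colon G/V \to X'$ exists in $\bS(G;\sE)$ with kernel pair $R$; since the inclusion into $\bS(G)$ preserves fiber products, $\pi'$ is a surjection of $G$-sets with the same kernel pair as $\pi\colon G/V \to G/U$, hence isomorphic to $\pi$ by the first isomorphism theorem for sets, forcing $G/U$ to be $\sE$-smooth. The paper's argument is shorter and never needs to identify the co-equalizer; yours is more constructive and makes the failure visible (in the spirit of Example~\ref{ex:ineffective}), at the cost of proving the reflector facts that the paper only states informally in that example.
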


\begin{proof}
(a) The category of sets has effective equivalence relations. This property passes to $\bS(G)$ since finite limits and co-limits here are computed on the underlying sets.

(b) Let $U$ be an open subgroup of $G$, and let $V$ be a member of $\sE$ with $V \subset U$. Put $Y=G/V$ and $X=G/U$, let $\pi \colon Y \to X$ be the natural map, and let $R \subset Y \times Y$ be the kernel pair of $\pi$. Since $Y \times Y$ belongs to $\bS(G;\sE)$, so does the subobject $R$, and so $R$ defines an equivalence relation on $Y$ in the category $\bS(G;\sE)$. Thus, by assumption, there is a map $\pi' \colon Y \to X'$ in $\bS(G;\sE)$ with kernel pair $R$; of course, we may as well assume $\pi'$ is surjective. Since the inclusion of $\bS(G;\sE)$ into $\bS(G)$ preserves fiber products, it follows that $R$ is the kernel pair of $\pi'$ in $\bS(G)$. Thus $\pi$ and $\pi'$ are isomorphic. In particular, $G/V$ is $\sE$-smooth, and so $V$ belongs to $\sE$.
\end{proof}

\subsection{Pre-Galois categories}

We now introduce this class of categories:

\begin{definition}
A \defn{pre-Galois category} is a non-degenerate $\rB$-category with effective equivalence relations.
\end{definition}

This definition is equivalent to the one given in the introduction. As the preceding discussion shows, if $G$ is an admissible group then $\bS(G)$ is a pre-Galois category.

\subsection{Comparison with Galois categories}

We now discuss the relation between the classical notion of Galois category and our notion of pre-Galois category. We begin by recalling the former:

\begin{definition} \label{def:galois}
A \defn{Galois category} is a pair $(\cC, \omega)$ where $\cC$ is a category and $\omega \colon \cC \to \FinSet$ is a functor (the \defn{fiber functor}) such that the following axioms hold:
\begin{enumerate}
\item $\cC$ has finite limits and colimits.
\item Every morphism $X \to Y$ in $\cC$ factors as $X \to I \to Y$, where $I$ is a summand of $Y$ and $X \to I$ is a strict epimorphim, i.e., $X \to I$ is the co-equalizer of $X \times_I X \rightrightarrows X$.
\item $\omega$ is exact, i.e., it commutes with finite limits and co-limits.
\item $\omega$ is conservative, i.e., $\omega(\phi)$ is an isomorphism if and only if $\phi$ is.
\end{enumerate}
We note there are other axiomizations; this one comes from \cite[\S 2.1.1]{Cadoret}.
\end{definition}

The following is the main result we are after.

\begin{proposition}
Let $\cB$ be a category and $\omega \colon \cB \to \FinSet$ a functor. The following are equivalent:
\begin{enumerate}[(i)]
\item $(\cB, \omega)$ is a Galois category.
\item $\cB$ is a pre-Galois category and $\omega$ is exact and conservative.
\end{enumerate}
\end{proposition}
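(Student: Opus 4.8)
The plan is to prove the two implications separately. For $(ii)\Rightarrow(i)$, I would assume $\cB$ is pre-Galois with $\omega$ exact and conservative, and check the four axioms of Definition~\ref{def:galois}. Axioms (3) and (4) are hypotheses. For (1), $\cB$ has finite limits because it has a final object and fiber products (so all finite limits exist), and it has finite colimits by Proposition~\ref{prop:B-co-compl}. For the factorization axiom (2), given $f\colon X\to Y$ I would take $I=\im(f)$, which by Corollary~\ref{cor:im} exists, is a summand of $Y$, and receives an epimorphism from $X$; it then remains to see that $X\to I$ is a strict epimorphism. Writing $R=\Eq(X\to I)$, effectiveness of equivalence relations gives a quotient $X\to X/R$ whose kernel pair is again $R$, and since both $X\to I$ and $X\to X/R$ are epimorphisms with kernel pair $R$, Proposition~\ref{prop:non-degen-factor} makes each factor through the other, so they are isomorphic by balancedness (Corollary~\ref{cor:balanced}); thus $X\to I$ is the coequalizer of its kernel pair.

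The substance lies in the converse $(i)\Rightarrow(ii)$. Exactness and conservativity are Galois axioms, so the work is to verify that $\cB$ is pre-Galois. First I would record the standard consequences of $\omega$ being exact and conservative: it reflects isomorphisms, it preserves and reflects monomorphisms (via the diagonal criterion of Proposition~\ref{prop:mono}) and epimorphisms, and $X\cong\bzero$ if and only if $\omega(X)=\emptyset$. Axioms (a) and (d) of Definition~\ref{defn:B} are immediate from finite completeness and cocompleteness. For (b), I would induct on the cardinality of $\omega(X)$: a non-empty, non-atomic $X$ splits as $Y\amalg Z$ with both summands non-empty, whence $\lvert\omega(Y)\rvert,\lvert\omega(Z)\rvert<\lvert\omega(X)\rvert$, and the induction terminates.

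The two delicate $\rB$-axioms are (c) and (e). For (c), with $X$ atomic and $f\colon X\to Y\amalg Z$, I would form the pullbacks $X_Y$ and $X_Z$ of $f$ along the two coproduct inclusions; exactness computes all the relevant fiber functors, and in $\FinSet$ the canonical map $X_Y\amalg X_Z\to X$ becomes a bijection, so conservativity makes it an isomorphism in $\cB$. Atomicity of $X$ then forces one of $X_Y,X_Z$ to be empty, which gives the desired factorization, while the fact that $Y\times_{Y\amalg Z}Z\cong\bzero$ (its fiber functor is empty) yields both the injectivity and the disjointness needed for the asserted bijection of Hom-sets. For (e), I would use the Galois factorization axiom: a map of atoms $f\colon X\to Y$ factors through a non-empty summand of $Y$, which must be all of $Y$, so $f$ is a strict epimorphism; if in addition $f$ is monomorphic, then its kernel pair is trivial, and a strict epimorphism with trivial kernel pair is an isomorphism.

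Finally I would establish non-degeneracy and effective equivalence relations. The unit $\bone$ is atomic because $\omega(\bone)$ is a one-point set. For non-degeneracy, maps of atoms are strict epimorphisms (as just shown), so their fiber functors surject onto the non-empty set $\omega(Z)$; hence $\omega(X)\times_{\omega(Z)}\omega(Y)$ is non-empty and therefore so is $X\times_Z Y$. For effectiveness, given an equivalence relation $R$ on $X$, exactness makes $\omega(R)\rightrightarrows\omega(X)$ an equivalence relation in $\FinSet$, where it is effective; since $\omega$ preserves the coequalizer $X/R$ and the kernel pair $R'$ of $X\to X/R$, and since $R\subseteq R'$ always holds, comparing $\omega(R)$ and $\omega(R')$ as subobjects of $\omega(X\times X)$ and invoking conservativity gives $R=R'$. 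The hard part will be axiom (c)—the disjointness and universality of coproducts—together with the two-way interplay between the Galois factorization axiom and the image/strict-epimorphism machinery of \S\ref{ss:B-prop}; once these are correctly aligned, the remaining verifications are routine bookkeeping with the exact conservative functor $\omega$.
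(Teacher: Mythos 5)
Your proof of (ii)$\Rightarrow$(i) is essentially identical to the paper's: both verify the four Galois axioms directly, using Proposition~\ref{prop:B-co-compl} for finite cocompleteness and the combination of effectiveness with Proposition~\ref{prop:non-degen-factor} to show that every epimorphism is strict. The real divergence is in (i)$\Rightarrow$(ii). The paper disposes of this direction in one line by invoking the reconstruction theorem for Galois categories (\cite[Theorem~2.8]{Cadoret}): $\cB$ is equivalent to the category of finite $G$-sets for a profinite group $G$, which is $\bS(G)$ for an admissible group and hence pre-Galois. You instead verify the pre-Galois axioms directly from the Galois axioms using exactness and conservativity of $\omega$, and your outline is sound: the induction on $\lvert\omega(X)\rvert$ for atomic decompositions, the pullback argument for axiom (c), the use of the factorization axiom plus the trivial-kernel-pair argument for axiom (e), the surjectivity of $\omega$ on maps of atoms for non-degeneracy, and the comparison of $R$ with the kernel pair $R'$ via conservativity for effectiveness all go through (the one place needing a little care is the injectivity half of axiom (c), where you must first check that $Y\to Y\amalg Z$ is a monomorphism, e.g.\ via the diagonal criterion and conservativity, and that $\Hom(X,\bzero)=\emptyset$ for $X$ non-empty). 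What your route buys is self-containment: it avoids importing the full strength of Grothendieck's reconstruction theorem, at the cost of a substantially longer verification. What the paper's route buys is brevity, and it also implicitly records the stronger fact that a Galois category is not merely pre-Galois but of the form $\bS(G)$ for $G$ profinite.
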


\begin{proof}
Suppose (i) holds. By the main theorem of Galois categories \cite[Theorem~2.8]{Cadoret}, up to equivalence, $\cB$ is the category of finite $G$-sets, for some pro-finite group $G$, and $\omega$ is the forgetful functor. Since $G$ is an admissible group and $\cB=\bS(G)$, it follows that $\cB$ is pre-Galois. Thus (ii) holds.

Now suppose (ii) holds. We verify the conditions of Definition~\ref{def:galois}. Conditions (c) and (d) hold by assumption. Any $\rB$-category is finitely complete by definition, and a non-degenerate one is finitely co-complete by Proposition~\ref{prop:B-co-compl}; thus (a) holds. Every morphism $f$ in a $\rB$-category factors as $f=g \circ h$, where $h$ is an epimorphism and $g$ is the inclusion of a summand. Thus to complete the proof of (b), it suffices to show that every epimorphism is strict.

Let $f \colon X \to Y$ be an epimorphism, and let $R=\Eq(f)$ be its kernel pair. Since equivalence relations are effective, the quotient $g \colon X \to X/R$ exists, and $R=\Eq(g)$. By Proposition~\ref{prop:non-degen-factor}, we see that $g$ and $f$ are isomorphic. Since $g$ is the co-kernel of $R$, so is $f$, i.e., $f$ is strict.
\end{proof}

The proposition can be summarized as: ``Galois = pre-Galois + fiber functor.''

\section{Categories of atoms} \label{s:Acat}

A $\rB$-category is completely determined by its atoms. In this section, we make this statement precise: we introduce the notion of an $\rA$-category, and show that $\rA$-categories are exactly the (opposite) categories of atoms in a $\rB$-categories. The $\rA$-category perspective is useful since it provides a bridge between $\rB$-categories and finite relational structures. All categories in this section are assumed to be essentially small.

\subsection{The $\AA$ and $\BB$ constructions} \label{ss:AA}

Let $\cB$ be a B${}_0$-category. We define $\AA(\cB)$ to be the full subcategory of $\cB^{\op}$ spanned by the atoms of $\cB$. For example, if $\cB=\bS(G)$ then $\AA(\cB)=\bT(G)^{\op}$ is the opposite of the category of transitive $G$-sets.

Let $\cA$ be an essentially small category. We define a category $\BB(\cA)$ as follows. An object of $\BB(\cA)$ is a finite sequence $X_{\bullet}=(X_1, \ldots, X_n)$ where $X_i$ is an object of $\cA$. A morphism $(X_1, \ldots, X_n) \to (Y_1, \ldots, Y_m)$ consists of a function $a \colon [n] \to [m]$ together with a morphism $X_i \to Y_{a(i)}$ in $\cA^{\op}$ for each $i \in [n]$. Composition is defined in the obvious manner.

\begin{proposition}
For any B${}_0$-category $\cB$, we have an equivalence $\Phi \colon \BB(\AA(\cB)) \to \cB$ given on objects by
\begin{displaymath}
\Phi((X_1, \ldots, X_n)) = X_1 \amalg \cdots \amalg X_n.
\end{displaymath}
\end{proposition}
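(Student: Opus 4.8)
The plan is to show that $\Phi$ is a well-defined functor which is fully faithful and essentially surjective, and to extract each of these three facts from results already established for $\rB_0$-categories. The work has essentially all been done in Proposition~\ref{prop:Bmap}; the remaining task is bookkeeping.

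First I would pin down $\Phi$ on morphisms. A morphism $(X_1, \ldots, X_n) \to (Y_1, \ldots, Y_m)$ in $\BB(\AA(\cB))$ consists of a function $a \colon [n] \to [m]$ together with, for each $i$, a map $f_i \colon X_i \to Y_{a(i)}$ between atoms of $\cB$; here I am using that $\AA(\cB)^{\op}$ is the full subcategory of $\cB$ spanned by atoms, so that the morphisms of $\cA^{\op}$ appearing in the definition of $\BB(\cA)$ are genuine morphisms of $\cB$. I would define $\Phi(a, (f_i))$ to be the unique morphism $X_1 \amalg \cdots \amalg X_n \to Y_1 \amalg \cdots \amalg Y_m$ whose restriction to the summand $X_i$ is $\iota_{a(i)} \circ f_i$, where $\iota_{a(i)}$ denotes the co-product inclusion of $Y_{a(i)}$; this exists and is unique by the universal property of the co-product. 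Preservation of identities is immediate, and preservation of composition is a routine verification via the same universal property, since the restriction of a composite to each summand $X_i$ may be computed one summand at a time. Thus $\Phi$ is a functor (though not an isomorphism of categories, as permuting the entries of a sequence yields a different object mapping to an isomorphic co-product).

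Essential surjectivity is immediate from Definition~\ref{defn:B}(b): every object of $\cB$ is isomorphic to a finite co-product of atoms $Y_1 \amalg \cdots \amalg Y_m$, which is exactly $\Phi((Y_1, \ldots, Y_m))$. For full faithfulness, I would invoke Proposition~\ref{prop:Bmap} directly. That proposition states that each morphism $X_1 \amalg \cdots \amalg X_n \to Y_1 \amalg \cdots \amalg Y_m$ of $\cB$ arises from a unique pair $(a, (f_i))$ of the form above, which is precisely a bijection between $\Hom_{\cB}(X_1 \amalg \cdots \amalg X_n, \, Y_1 \amalg \cdots \amalg Y_m)$ and $\Hom_{\BB(\AA(\cB))}((X_1, \ldots, X_n), (Y_1, \ldots, Y_m))$ as the latter is defined. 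Unwinding the constructions, this bijection is exactly the map induced by $\Phi$ on Hom-sets, so $\Phi$ is fully faithful. Combining the three properties gives the asserted equivalence.

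I do not expect a serious obstacle here, since Proposition~\ref{prop:Bmap} carries the entire load. The only points that genuinely require care are variance bookkeeping—keeping straight that a morphism of $\cA^{\op} = \AA(\cB)^{\op}$ is a morphism of atoms in $\cB$, so that the composition law of $\BB(\AA(\cB))$ really matches co-product composition in $\cB$—and confirming that the bijection furnished by Proposition~\ref{prop:Bmap} is not merely a set-level bijection but agrees with the functor $\Phi$ I have constructed, rather than some reindexed variant of it.
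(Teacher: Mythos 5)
Your proposal is correct and matches the paper's intended argument: the paper's proof is a one-line citation of the basic properties established in \S\ref{ss:B0-prop}, of which Proposition~\ref{prop:Bmap} is exactly the one carrying the load for full faithfulness, with Definition~\ref{defn:B}(b) giving essential surjectivity. You have simply written out the bookkeeping (definition of $\Phi$ on morphisms, functoriality, variance of $\AA(\cB)^{\op}$) that the paper leaves implicit.
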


\begin{proof}
This follows from the basic properties of $\rB_0$-categories established in \S \ref{ss:B0-prop}.
\end{proof}

\begin{proposition} \label{prop:B0}
For any category $\cA$, the category $\BB(\cA)$ is a B${}_0$-category and we have a natural equivalence $\cA \cong \AA(\BB(\cA))$.
\end{proposition}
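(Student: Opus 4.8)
The plan is to verify the two assertions separately: first I would check the three $\rB_0$-axioms for $\BB(\cA)$ by describing the relevant universal constructions explicitly, and then I would exhibit the comparison functor $\cA \to \AA(\BB(\cA))$ and show directly that it is an equivalence.

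I would begin by describing co-products in $\BB(\cA)$: the co-product of $(X_1,\ldots,X_n)$ and $(Y_1,\ldots,Y_m)$ is the concatenation $(X_1,\ldots,X_n,Y_1,\ldots,Y_m)$, equipped with the evident inclusions, and the initial object is the empty sequence. The universal property is immediate from the definition of morphisms, since a morphism out of a concatenation is exactly the data of a morphism out of each summand; this gives axiom (a). Before going further, I would pin down the isomorphisms of $\BB(\cA)$: a morphism $(a,(f_i))$ is invertible if and only if $a$ is a bijection and each component $f_i$ is an isomorphism in $\cA^{\op}$. This is the one genuinely technical point, since the description of atoms rests on it.

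With isomorphisms understood, the atoms of $\BB(\cA)$ are exactly the length-one sequences $(X)$: such a sequence cannot split as a co-product of two non-empty sequences because concatenation adds lengths and isomorphisms preserve length, whereas any sequence of length $\ge 2$ visibly splits. Axiom (b) is then the tautology $(X_1,\ldots,X_n) = (X_1) \amalg \cdots \amalg (X_n)$. For axiom (c) I would first record that for any $W = (W_1,\ldots,W_\ell)$ one has $\Hom_{\BB(\cA)}((X),W) = \coprod_{j=1}^{\ell} \Hom_{\cA^{\op}}(X,W_j)$, simply because a morphism out of a length-one sequence is a choice of target index together with a single component morphism. Applying this with $W = Y$, with $W = Z$, and with $W = Y \amalg Z$, and using that the index set of $Y \amalg Z$ is the disjoint union of those of $Y$ and $Z$, yields the required bijection.

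For the equivalence $\cA \cong \AA(\BB(\cA))$, I would define the comparison functor on objects by $X \mapsto (X)$; it lands in the atoms and is essentially surjective by the identification of atoms above. On morphisms, unwinding the two opposites gives $\Hom_{\AA(\BB(\cA))}((X),(Y)) = \Hom_{\BB(\cA)}((Y),(X)) = \Hom_{\cA^{\op}}(Y,X) = \Hom_{\cA}(X,Y)$, so the functor is fully faithful, hence an equivalence; naturality in $\cA$ is clear from the functoriality of the $\AA$ and $\BB$ constructions. I expect the main obstacle to be nothing conceptual but rather the bookkeeping with opposite categories—both in the definition of $\BB(\cA)$, whose component morphisms live in $\cA^{\op}$, and in the definition of $\AA$, which carves out atoms inside the opposite category. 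The two opposites must cancel correctly in the morphism computation of this last step; once that is handled carefully, the remaining verifications are formal.
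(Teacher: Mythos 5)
Your proposal is correct and follows essentially the same route as the paper: concatenation for co-products, the characterization of isomorphisms (bijective index map with isomorphism components) as the key step identifying atoms with length-one sequences, and the Hom-set decomposition for axiom (c). Your explicit treatment of the equivalence $\cA \cong \AA(\BB(\cA))$ via $X \mapsto (X)$ only spells out what the paper leaves implicit after the identification of atoms.
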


\begin{proof}
(a) It is clear that co-products in $\BB(\cA)$ are given on objects by
\begin{displaymath}
(X_1, \ldots, X_m) \amalg (Y_1, \ldots, Y_n) = (X_1, \ldots, X_m, Y_1, \ldots, Y_n),
\end{displaymath}
with the obvious structure maps. We note that the zero object of $\BB(\cA)$ is the empty sequence $()$.

(b) Suppose that $X_{\bullet}=(X_1, \ldots, X_n)$ and $Y_{\bullet}=(Y_1, \ldots, Y_m)$ are isomorphic objects of $\BB(\cA)$. Let $(a,f) \colon X_{\bullet} \to Y_{\bullet}$ be the given isomorphism, where $a \colon [n] \to [m]$ is a map of sets and $f_i \colon X_i \to Y_{a(i)}$ is a morphism in $\cA^{\op}$, and let $(b,g) \colon Y_{\bullet} \to X_{\bullet}$ be its inverse. Since the composition is the identity, it follows that $b \circ a$ and $a \circ b$ are the identity maps of $[n]$ and $[m]$; thus $n=m$ and $a$ and $b$ are inverse permutations. Moreover, $f_i \colon X_i \to Y_{a(i)}$ is an isomorphism with inverse $g_{a(i)}$.

From the above, together with the description of the co-product on $\BB(\cA)$, it follows that $(X)$ is an atomic object of $\BB(\cA)$, for any object $X$ of $\cA$. We thus see that every object of $\BB(\cA)$ is a finite co-product of atomic objects.

(c) It follows from the definition of morphisms in $\BB(\cA)$ that the natural map
\begin{displaymath}
\Hom_{\BB(\cA)}((X), Y_{\bullet} \amalg Z_{\bullet}) \to \Hom_{\BB(\cA)}((X), Y_{\bullet}) \amalg \Hom_{\BB(\cA)}((X), Z_{\bullet})
\end{displaymath}
is bijective, for any object $X$ of $\cA$ and objects $Y_{\bullet}$ and $Z_{\bullet}$ of $\BB(\cA)$.
\end{proof}

We thus see that there is a correspondence between $\rB_0$-categories and all (essentially small) categories. In the remainder of this section, we refine this correspondence, and determine what $\rB_1$- and B-categories correspond to. To this end, we begin with one simple observation:

\begin{proposition} \label{prop:AB-epi-mono}
Let $f \colon X \to Y$ be a morphism in the category $\cA$, and let $f' \colon (Y) \to (X)$ be the corresponding morphism in $\BB(\cA)$. Then $f$ is an isomorphism (resp.\ monomorphism, epimorphism) if and only if $f'$ is an isomorphism (resp.\ epimorphism, monomorphism).
\end{proposition}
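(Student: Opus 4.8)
The plan is to argue directly from the Hom-set description of morphisms in $\BB(\cA)$, rather than through the structural results of \S\ref{ss:B-prop}. This is the main point to keep in mind: those results (e.g.\ Proposition~\ref{prop:Bmonic}) use the full $\rB$-category axioms, whereas $\cA$ is arbitrary and $\BB(\cA)$ is only a $\rB_0$-category, so I cannot invoke them. Everything I need, however, is visible in the definition of morphisms in $\BB(\cA)$ once I compute the relevant Hom-sets and trace through pre- and post-composition with $f'$.

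First I would record the two identifications that come straight from the definition. For any object $W_\bullet=(W_1,\ldots,W_k)$ of $\BB(\cA)$,
\[
  \Hom_{\BB(\cA)}(W_\bullet,(Y)) = \prod_{i=1}^k \Hom_{\cA}(Y,W_i),
  \qquad
  \Hom_{\BB(\cA)}((X),W_\bullet) = \coprod_{j=1}^k \Hom_{\cA}(W_j,X).
\]
Here I use that an $\cA^{\op}$-morphism into (resp.\ out of) a component $W_i$ is the same as an $\cA$-morphism $Y\to W_i$ (resp.\ $W_j\to X$), together with the combinatorial datum of the structure function: the unique constant function $[k]\to[1]$ in the first case, and a choice of index $j\in[k]$ in the second.

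Next I would compute how composing with $f'\colon(Y)\to(X)$ acts. Under the first identification, post-composition by $f'$ on $\Hom_{\BB(\cA)}(W_\bullet,(Y))$ becomes the product over $i$ of the precomposition maps $f^{*}\colon\Hom_{\cA}(Y,W_i)\to\Hom_{\cA}(X,W_i)$; testing against the singletons $(W)$ shows that $f'$ is monic in $\BB(\cA)$ forces each $f^{*}$ to be injective, and conversely a product of injective maps is injective, so $f'$ is monic iff $f^{*}$ is injective for every $W$, i.e.\ iff $f$ is epic in $\cA$. Dually, under the second identification, precomposition by $f'$ on $\Hom_{\BB(\cA)}((X),W_\bullet)$ becomes the disjoint union over $j$ of the postcomposition maps $f_{*}\colon\Hom_{\cA}(W_j,X)\to\Hom_{\cA}(W_j,Y)$; since this disjoint union respects the index $j$, it is injective iff each $f_{*}$ is, and again reducing to singletons gives that $f'$ is epic iff $f$ is monic in $\cA$. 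This produces exactly the claimed swap of ``monomorphism'' and ``epimorphism''.

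Finally, for the isomorphism statement I would simply invoke the analysis of isomorphisms between singletons already carried out in the proof of Proposition~\ref{prop:B0}(b): an isomorphism $(Y)\to(X)$ in $\BB(\cA)$ is precisely an isomorphism $Y\to X$ in $\cA^{\op}$, hence an isomorphism $X\to Y$ in $\cA$, namely $f$. (Equivalently, $X\mapsto(X)$, $f\mapsto f'$ is the fully faithful embedding $\cA\cong\AA(\BB(\cA))\hookrightarrow\BB(\cA)^{\op}$ of Proposition~\ref{prop:B0}, and a full-subcategory inclusion both preserves and reflects isomorphisms.) I do not expect a genuine obstacle here; the only thing demanding care is the bookkeeping of the direction-reversal introduced by $\cA^{\op}$ — which is exactly what interchanges the two conditions — and the observation that the monic/epic tests against arbitrary $W_\bullet$ reduce to tests against singletons.
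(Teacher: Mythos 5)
Your proof is correct and is essentially the paper's argument in different packaging: the paper chases explicit witnesses (pairs of distinct morphisms with equal composites) through the definition of morphisms in $\BB(\cA)$, while you express the same computation as the identifications $\Hom_{\BB(\cA)}(W_\bullet,(Y))\cong\prod_i\Hom_{\cA}(Y,W_i)$ and $\Hom_{\BB(\cA)}((X),W_\bullet)\cong\coprod_j\Hom_{\cA}(W_j,X)$ together with injectivity of the induced $f^*$ and $f_*$. Both arguments rest on the same two observations --- distinctness is detected componentwise in the product, and equality in the coproduct forces the indices to agree --- so there is no substantive difference.
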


\begin{proof}
The statement for isomorphisms is clear, as an inverse to one of $f$ or $f'$ gives an inverse to the other. It is also clear that if $f$ is not a monomorphism then $f'$ is not an epimorphism, as a witness to the failure of the former leads to one for the latter. Similarly, it is clear that if $f$ is not an epimorphism then $f'$ is not a monomorphism.

Now suppose that $f'$ is not a monomorphism. Then there exist distinct morphisms $g', h' \colon (Z_1,\ldots,Z_n) \to (Y)$ such that $f' \circ g'=f' \circ h'$. Let $g'_i$ and $h'_i$ be the components of $g'$ and $h'$, and let $g_i$ and $h_i$ be the corresponding morphisms in $\cA$. Since $g' \ne h'$ there is some $i$ such that $g'_i \ne h'_i$. Thus $g_i$ and $h_i$ are distinct morphisms in $\cA$ with $g_i \circ f = h_i \circ f$, and so $f$ is not an epimorphism.

Finally, suppose that $f'$ is not an epimorphism. Then there exist distinct morphisms $g', h' \colon (X) \to (W_1, \ldots, W_n)$ such that $g' \circ f' = h' \circ f'$. By definition, $g'$ corresponds to a morphism $g \colon W_i \to X$ for some $i$, and $h'$ to a morphism $h \colon W_j \to X$ for some $j$. The equality $g' \circ f'=h' \circ f'$ exactly means that $i=j$ and $g \circ f = h \circ f$. Since $g' \ne h'$ we have $g \ne h$, and so $f$ is not a monomorphism.
\end{proof}


\subsection{Initial objects}

Let $\cA$ be a category. We say that a set $S$ of objects of $\cA$ is an \defn{initial set} if for every object $X$ of $\cA$ there exists a unique object $I$ of $S$ such that $\Hom_{\cA}(I,X)$ is non-empty, and this set contains a single element. Suppose $\cA$ has an initial set $S$. For $I \in S$, let $\cA_I$ be the full subcategory of $\cA$ spanned by objects $X$ for which there exists a map $I \to X$. Then $\cA_I$ has $I$ as an initial object, and $\cA$ is the disjoint union of the $\cA_I$'s (as a category). Conversely, if $\cA$ is a (set-indexed) disjoint union of categories with initial objects, then $\cA$ has an initial set.

\begin{proposition} \label{prop:initial}
Let $\cA$ be a category, and let $\cB=\BB(\cA)$.
\begin{enumerate}
\item $\cA$ has a finite initial set if and only if $\cB$ has a final object.
\item $\cA$ has an initial object if and only if $\cB$ has an atomic final object.
\end{enumerate}
\end{proposition}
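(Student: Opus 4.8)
The plan is to unwind the definition of morphisms in $\BB(\cA)$, reduce the question of whether an object is final to a condition on single objects of $\cA$, and observe that this condition is verbatim the initial-set condition.

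First I would record the structural computation underlying everything. Fix a candidate object $T=(T_1,\ldots,T_m)$ of $\BB(\cA)$. By the universal property of co-products (which are concatenation by Proposition~\ref{prop:B0}(a)), for any $X_{\bullet}=(X_1,\ldots,X_n)$ the map that restricts a morphism along the co-product inclusions $(X_i)\to X_{\bullet}$ gives a bijection $\Hom(X_{\bullet},T)\cong\prod_{i=1}^n\Hom((X_i),T)$. Consequently there is a \emph{unique} morphism $X_{\bullet}\to T$ for every $X_{\bullet}$ if and only if $\Hom((X),T)$ is a singleton for every single object $X$ of $\cA$. Now by the very definition of morphisms in $\BB(\cA)$ we have $\Hom((X),T)=\coprod_{j\in[m]}\Hom_{\cA}(T_j,X)$, the variance being reversed because components live in $\cA^{\op}$. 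Thus $T$ is final in $\BB(\cA)$ if and only if for every object $X$ of $\cA$ there is a unique index $j\in[m]$ with $\Hom_{\cA}(T_j,X)\neq\emptyset$, and this Hom-set is a singleton.

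Next I would match this against the definition of an initial set to prove (a). The criterion just obtained says precisely that $\{T_1,\ldots,T_m\}$ is an initial set for $\cA$ (the unique index $j$ selecting the unique member receiving a map to $X$). For the forward direction I must check the $T_j$ are pairwise non-isomorphic, so that they form a genuine finite set: taking $X=T_k$, uniqueness of $j$ together with $\id_{T_k}\in\Hom_{\cA}(T_k,T_k)$ forces $T_k$ to be the only member admitting a map to $T_k$, so no other $T_l$ can be isomorphic to $T_k$. Conversely, given a finite initial set $S$, its elements are pairwise non-isomorphic by the same identity-map argument, and listing them as a finite sequence yields an object of $\BB(\cA)$ that is final by the criterion above. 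This establishes (a).

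For (b) I would use that the atoms of $\BB(\cA)$ are exactly the length-one sequences $(X)$: co-products are concatenation, each $(X)$ is atomic, and a sequence of length $m\geq 2$ decomposes nontrivially (while the empty sequence is the initial object, hence not an atom). Therefore a final object $(T_1,\ldots,T_m)$ is atomic precisely when $m=1$, and by the criterion from (a) a length-one sequence $(T_1)$ is final exactly when $\{T_1\}$ is an initial set, i.e.\ there is a unique morphism $T_1\to X$ for every $X$ — which is the definition of $T_1$ being an initial object of $\cA$. Combining, $\cB$ has an atomic final object if and only if $\cA$ has an initial object, proving (b). The only points requiring care are keeping the variance straight (the reduction runs over the \emph{source} sequence, while component maps point $T_{a(i)}\to X_i$ in $\cA$) and the distinctness argument that upgrades a final \emph{sequence} to an initial \emph{set}; both are short once the bijection $\Hom(X_{\bullet},T)\cong\prod_i\Hom((X_i),T)$ is in hand.
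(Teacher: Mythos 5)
Your proof is correct and follows essentially the same route as the paper's: both reduce finality of $(T_1,\ldots,T_m)$ to the condition that $\Hom((X),T_\bullet)=\coprod_j\Hom_{\cA}(T_j,X)$ is a singleton for every object $X$ of $\cA$, which is verbatim the initial-set condition, and both obtain (b) by noting that atoms are the length-one sequences. Your extra check that the $T_j$ are pairwise non-isomorphic is a detail the paper leaves implicit, but it does not change the argument.
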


\begin{proof}
(a) Suppose that $\{I_1, \ldots, I_n\}$ is an initial object of $\cA$. We claim that $I_{\bullet}=(I_1, \ldots, I_n)$ is a final object of $\cB$. Indeed, let $X_{\bullet}=(X_1, \ldots, X_m)$ be given. For each $1 \le i \le m$ there is a unique $1 \le a(i) \le n$ such $\Hom_{\cA}(I_{a(i)}, X_i)$ is non-empty, and it contains a single element $f_i$. The map $a$ together with $f_1, \ldots, f_n$ define a morphism $X_{\bullet} \to I_{\bullet}$ in $\cB$, and it is clearly the unique such map. Thus $I_{\bullet}$ is a final object of $\cB$. This reasoning is reversible too: if $I_{\bullet}$ is a final object of $\cB$ then $\{I_1, \ldots, I_n\}$ is an initial set of $\cA$.

(b) This is clear from the proof of (a).
\end{proof}

\subsection{Amalgamations} \label{ss:amalg}

A \defn{pre-amalgamation} in $\cA$ is a pair of morphisms $(b \colon A \to B, c \colon A \to C)$. Given a pre-amalgamation $(b,c)$, define $\Amalg(b,c)$ to be the category whose objects are pairs $(b' \colon B \to D, c' \colon C \to D)$ of morphisms in $\cA$ with $b'b=c'c$, with the obvious morphisms. An \defn{amalgamation set} for $(b,c)$ is an initial set of this category; we call the elements of this set \defn{amalgamations}.

\begin{proposition} \label{prop:amalg}
Let $\cA$ be a category and let $\cB=\BB(\cA)$ be the corresponding $\rB_0$-category. The following are equivalent:
\begin{enumerate}
\item Every pre-amalgamation in $\cA$ has a finite amalgamation set.
\item The category $\cB$ has fiber products.
\end{enumerate}
\end{proposition}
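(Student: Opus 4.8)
The plan is to reduce the existence of fiber products in $\cB=\BB(\cA)$ to the existence of fiber products of \emph{atoms over an atomic base}, and then to translate that special case directly into the existence of finite amalgamation sets via the functor of points. Throughout I use that the atoms of $\cB$ are the length-one sequences $(P)$, and that, by the definition of morphisms in $\BB(\cA)$, a map $(D)\to(P)$ of atoms in $\cB$ is the same thing as a morphism $P\to D$ in $\cA$; thus $\Hom_\cB((D),(P))=\Hom_\cA(P,D)$.

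First I would reduce to atoms. Given objects $X,Y$ with maps to a common $Z$, write $X=\coprod_\alpha X_\alpha$ and $Y=\coprod_\beta Y_\beta$ as co-products of atoms. By Proposition~\ref{prop:Bmap} each $X_\alpha\to Z$ factors through a unique atomic summand $Z_\alpha$ of $Z$, and likewise $Y_\beta\to Z$ through a unique summand $Z_\beta'$; a short functor-of-points computation on atoms (using that $\Hom(W,Z_j)\hookrightarrow\Hom(W,Z)$ is a disjoint summand) shows that $X_\alpha\times_Z Y_\beta$ is $\bzero$ when $Z_\alpha\ne Z_\beta'$ and agrees with $X_\alpha\times_{Z_\alpha}Y_\beta$ when $Z_\alpha=Z_\beta'$. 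By Proposition~\ref{prop:B0-fiber}, applied in each slot, $X\times_Z Y$ exists and equals the co-product of the $X_\alpha\times_Z Y_\beta$ as soon as all of these exist. Hence $\cB$ has fiber products if and only if $(B)\times_{(A)}(C)$ exists for every pair of atoms $(B),(C)$ mapping to a common atom $(A)$.

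Next I would make the key translation. Under the identification above, a cospan $(B)\to(A)\leftarrow(C)$ of atoms in $\cB$ is precisely a pre-amalgamation $(b\colon A\to B,\, c\colon A\to C)$ in $\cA$, and postcomposition with $(B)\to(A)$ corresponds to $\beta\mapsto\beta b$. Therefore the fiber product $(B)\times_{(A)}(C)$, if it exists, must represent the functor sending an atom $(D)$ to
\begin{displaymath}
\Hom_\cA(B,D)\times_{\Hom_\cA(A,D)}\Hom_\cA(C,D)=\{\,(\beta,\gamma):\beta\colon B\to D,\ \gamma\colon C\to D,\ \beta b=\gamma c\,\},
\end{displaymath}
which is exactly the set of objects of $\Amalg(b,c)$ with apex $D$. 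On the other hand, for a finite sequence $(D_1,\ldots,D_r)$ we have $\Hom_\cB((D),(D_1,\ldots,D_r))=\coprod_{k=1}^r\Hom_\cA(D_k,D)$.

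Finally I would match these two descriptions. Unwinding the definition of an initial set, a finite amalgamation set $\{(b'_k,c'_k)\colon B,C\to D_k\}_{k=1}^r$ for $(b,c)$ is exactly a finite family for which $\phi\mapsto(\phi b'_k,\phi c'_k)$ gives a bijection $\coprod_{k=1}^r\Hom_\cA(D_k,D)\cong\{(\beta,\gamma):\beta b=\gamma c\}$, natural in $D$. Comparing with the two displays, this says precisely that $(D_1,\ldots,D_r)$ represents the fiber-product functor, i.e.\ $(B)\times_{(A)}(C)=(D_1,\ldots,D_r)$; this gives (a)$\Rightarrow$(b). Conversely, if $(B)\times_{(A)}(C)$ exists it is some finite sequence $(D_1,\ldots,D_r)$, and reading its two projections as morphisms $b'_k\colon B\to D_k$ and $c'_k\colon C\to D_k$ in $\cA$ (with $b'_kb=c'_kc$ coming from compatibility over $(A)$) exhibits a finite amalgamation set, giving (b)$\Rightarrow$(a). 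The reduction to the atomic case is routine given Proposition~\ref{prop:B0-fiber}, and the representability computation is essentially a restatement of the definitions; I expect the only genuine subtlety to be bookkeeping with variance, namely keeping straight that a limit cospan over $(A)$ in $\cB$ becomes a span \emph{out of} $A$ in $\cA$, and that the universal property of the fiber product matches the \emph{initial}-set property defining an amalgamation set.
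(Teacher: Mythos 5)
Your argument is correct and follows essentially the same route as the paper's: translate a cospan of atoms $(B)\to(A)\leftarrow(C)$ into a pre-amalgamation, identify the universal property of the fiber product with the initial-set property of an amalgamation set (the paper verifies this directly rather than via the functor of points, but it is the same computation), and then extend to arbitrary objects via Proposition~\ref{prop:B0-fiber}. Your explicit treatment of a non-atomic base $Z$ (each atomic summand of $X$ and $Y$ landing in a unique summand of $Z$, with $\bzero$ when they disagree) is a point the paper leaves implicit, but it is routine and does not change the substance of the proof.
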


\begin{proof}
Suppose (b) holds. Let $(b,c)$ be a pre-amalgamation in $\cA$, where $b \colon A \to B$ and $c \colon A \to C$. Let $(X_1, \ldots, X_n)$ be the fiber product of $(B)$ with $(C)$ over $(A)$ in $\cB$. The map $(X_1, \ldots, X_n) \to (B)$ in $\cB$ corresponds to morphisms $f_i \colon B \to X_i$ in $\cA$, for $1 \le i \le n$. Similarly, the map $(X_1, \ldots, X_n) \to C$ corresponds to morphisms $g_i \colon X_i \to C$ in $\cA$, for $1 \le i \le n$. Clearly, $f_i \circ a = g_i \circ b$, so each $(f_i, g_i)$ is an object of $\Amalg(b,c)$.

We claim that $S=\{(f_i,g_i)\}_{1 \le i \le n}$ is an amalgamation set for $(b,c)$. Thus let $(f \colon B \to Y, g \colon C \to Y)$ be an arbitrary object of $\Amalg(b,c)$. Then $f$ defines a morphism $(Y) \to (B)$ in $\cB$, and similarly, $g$ defines a morphism $(Y) \to (C)$ in $\cB$. The two composition to $(A)$ agree, and so there is a unique morphism $(Y) \to (X_1, \ldots, X_n)$ that composes with the projections to the given morphisms. This proves the claim, and so (a) holds.

Now suppose (a) holds. Let $(B) \to (A)$ and $(C) \to (A)$ be morphisms of atoms in $\cB$, corresponding to maps $b \colon A \to B$ and $c \colon A \to C$ in $\cA$. Let $\{(f_i, g_i)\}_{1 \le i \le n}$ be an amalgamation set for $(b,c)$, where $f_i$ and $g_i$ map to $X_i$. Then, reversing the above reasoning, we see that $(X_1, \ldots, X_n)$ is naturally the fiber product of $(B)$ and $(C)$ over $(A)$.

We thus find that the fiber product of morphisms of atoms in $\cB$ always exists. It follows from Proposition~\ref{prop:B0-fiber} that all fibers products exist.
\end{proof}

We say that a category $\cA$ has the \defn{amalgamation property} (AP) if for every pre-amalgamation $(b,c)$ the category $\Amalg(b,c)$ is non-empty. This means that every diagram
\begin{displaymath}
\xymatrix{
B \ar@{..>}[r] & D \\
A \ar[u]^b \ar[r]^c & C \ar@{..>}[u] }
\end{displaymath}
can be filled, i.e., one can find $D$ and the dotted arrows making the square commute.

\begin{proposition} \label{prop:amalg2}
Let $\cA$ be a category in which all pre-amalgamations have a finite amalgamation set, and let $\cB=\BB(\cA)$. Then $\cA$ has the amalgamation property if and only if for every morphism of atoms $X \to Z$ and $Y \to Z$ in $\cB$, the fiber product $X \times_Z Y$ is non-empty.
\end{proposition}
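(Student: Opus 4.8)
The plan is to unwind the correspondence between fiber products in $\cB$ and amalgamation sets in $\cA$ that was already established in the proof of Proposition~\ref{prop:amalg}; no new construction is needed. First I would record the dictionary between the two sides. A pair of morphisms of atoms $X \to Z$ and $Y \to Z$ in $\cB$ is exactly the datum of a pre-amalgamation in $\cA$: writing $X=(B)$, $Y=(C)$, $Z=(A)$, the morphisms $(B)\to(A)$ and $(C)\to(A)$ in $\cB$ correspond (since $\BB(\cA)$ is built from $\cA^{\op}$, so morphism directions reverse) to morphisms $b\colon A\to B$ and $c\colon A\to C$ in $\cA$, i.e.\ to the pre-amalgamation $(b,c)$. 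This assignment is a bijection, so the quantifier ``for every morphism of atoms $X\to Z$ and $Y\to Z$'' matches the quantifier ``for every pre-amalgamation $(b,c)$.''

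Next I would invoke the computation of the fiber product from Proposition~\ref{prop:amalg}: if $\{(f_i,g_i)\}_{1\le i\le n}$ is an amalgamation set for $(b,c)$, with $f_i,g_i$ mapping to $X_i$, then $X\times_Z Y=(X_1,\ldots,X_n)$ in $\cB$. Since the initial object of $\BB(\cA)$ is the empty sequence $()=\bzero$, the fiber product $(X_1,\ldots,X_n)$ is non-empty precisely when $n\ge 1$, that is, precisely when the amalgamation set is non-empty.

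The final step identifies non-emptiness of the amalgamation set with non-emptiness of the category $\Amalg(b,c)$. An initial set (in the sense of \S\ref{ss:amalg}) is empty exactly when its ambient category has no objects: if the category is non-empty then any object receives a unique map from some element of the initial set, forcing that set to be non-empty, while conversely $\emptyset$ is vacuously an initial set of the empty category. Hence the amalgamation set for $(b,c)$ is non-empty if and only if $\Amalg(b,c)$ is non-empty, which is exactly the amalgamation property for $(b,c)$. Chaining the three equivalences, $\cA$ has the amalgamation property (every $\Amalg(b,c)$ is non-empty) if and only if every fiber product $X\times_Z Y$ of atoms in $\cB$ is non-empty, as claimed.

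I expect the only real subtlety to be the bookkeeping in the first step — keeping straight that morphisms reverse direction between $\cA$ and $\cB$ — together with the elementary but easily-overlooked observation that an initial set is empty if and only if its category is empty. No genuine difficulty arises, since the substantive content (the identification of the fiber product with an amalgamation set) is already furnished by Proposition~\ref{prop:amalg}.
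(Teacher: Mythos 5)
Your proposal is correct and matches the paper's intent exactly: the paper's proof of this proposition is the one-line remark that it is clear from the proof of Proposition~\ref{prop:amalg}, and your three-step unwinding (the dictionary between pre-amalgamations and cospans of atoms, the identification of the fiber product with the amalgamation set viewed as a sequence, and the observation that an initial set is empty precisely when $\Amalg(b,c)$ is empty) is precisely the bookkeeping that remark leaves implicit.
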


\begin{proof}
This is clear from the proof of Proposition~\ref{prop:amalg}.
\end{proof}

\subsection{A-categories} \label{ss:Acat}

We are finally ready to introduce the main concept of this section:

\begin{definition} \label{defn:Acat}
An \defn{A-category} is an essentially small category $\cA$ satisfying the following conditions:
\begin{enumerate}
\item The category $\cA$ has a finite initial set.
\item Every pre-amalgamation has a finite amalgamation set.
\item Every epimorphism in $\cA$ is an isomorphism.
\end{enumerate}
An \defn{$\rA_1$-category} is an essentially small category satisfying conditions (a) and (b).
\end{definition}

The following is the main result of this section:

\begin{theorem} \label{thm:AB}
Let $\cA$ be a category and put $\cB=\BB(\cA)$.
\begin{enumerate}
\item $\cB$ is a $\rB_1$-category $\iff$ $\cA$ is an $\rA_1$-category.
\item $\cB$ is a $\rB$-category $\iff$ $\cA$ is an $\rA$-category.
\item $\cB$ is a non-degenerate $\rB$-category $\iff$ $\cA$ is an $\rA$-category with an initial object and the amalgamation property.
\end{enumerate}
\end{theorem}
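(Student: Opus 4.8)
The plan is to assemble all three equivalences directly from the propositions already proved in this section. Since $\BB(\cA)$ is automatically a $\rB_0$-category by Proposition~\ref{prop:B0}, each part amounts to matching the extra axioms separating $\rB_1$-, $\rB$-, and non-degenerate $\rB$-categories from $\rB_0$-categories against the successive conditions defining $\rA_1$- and $\rA$-categories. The three parts are layered: each uses the previous one together with one or two earlier propositions, so I would prove them in order.

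For part (a), the step from $\rB_0$ to $\rB_1$ adds precisely axiom Definition~\ref{defn:B}(d), namely the existence of fiber products and a final object. Proposition~\ref{prop:initial}(a) states that $\cA$ has a finite initial set if and only if $\cB$ has a final object, while Proposition~\ref{prop:amalg} states that every pre-amalgamation in $\cA$ has a finite amalgamation set if and only if $\cB$ has fiber products. Conjoining these two biconditionals shows that $\cB$ is a $\rB_1$-category if and only if $\cA$ satisfies conditions (a) and (b) of Definition~\ref{defn:Acat}, i.e.\ $\cA$ is an $\rA_1$-category.

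For part (b), the additional requirement is axiom Definition~\ref{defn:B}(e), that every monomorphism of atoms in $\cB$ is an isomorphism. Here I would invoke Proposition~\ref{prop:AB-epi-mono}: a morphism $f \colon X \to Y$ in $\cA$ corresponds to a morphism $f' \colon (Y) \to (X)$ of atoms in $\cB$, under which $f$ is an epimorphism if and only if $f'$ is a monomorphism, and $f$ is an isomorphism if and only if $f'$ is. Since every morphism of atoms in $\cB$ arises in this way, axiom (e) for $\cB$ is equivalent to the assertion that every epimorphism in $\cA$ is an isomorphism, which is exactly condition Definition~\ref{defn:Acat}(c). Combined with part (a), this yields that $\cB$ is a $\rB$-category if and only if $\cA$ is an $\rA$-category.

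For part (c), I would treat the two clauses of non-degeneracy separately. The clause that $\bone$ be atomic is equivalent, by Proposition~\ref{prop:initial}(b), to $\cA$ having an initial object. The remaining clause is condition (a) of Proposition~\ref{prop:nondegen}, that $X \times_Z Y$ is non-empty for all maps of atoms $X \to Z$ and $Y \to Z$; since $\cA$ is already assumed to be an $\rA$-category, every pre-amalgamation has a finite amalgamation set, so Proposition~\ref{prop:amalg2} applies and shows this clause is equivalent to $\cA$ having the amalgamation property. Assembling these with part (b) completes the proof. I do not expect any genuine obstacle: the substance was all done in the earlier propositions, and this theorem is purely a matter of combining them. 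The only point needing attention is the verification that the finite-amalgamation-set hypothesis of Proposition~\ref{prop:amalg2} is in force in part (c), which it is because $\cA$ is an $\rA$-category there.
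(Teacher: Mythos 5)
Your proposal is correct and follows essentially the same route as the paper, which likewise deduces (a) from Propositions~\ref{prop:B0}, \ref{prop:initial}(a), and~\ref{prop:amalg}, (b) from Proposition~\ref{prop:AB-epi-mono}, and (c) from Propositions~\ref{prop:initial}(b) and~\ref{prop:amalg2}. Your extra remarks --- that atoms of $\BB(\cA)$ are exactly the one-term sequences and that the finite-amalgamation-set hypothesis of Proposition~\ref{prop:amalg2} is in force in part (c) --- are exactly the points the paper leaves implicit.
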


\begin{proof}
(a) follows from Propositions~\ref{prop:B0}, \ref{prop:initial}(a), and~\ref{prop:amalg}; (b) then follows from Proposition~\ref{prop:AB-epi-mono}; and (c) then follows from Propositions~\ref{prop:initial}(b) and~\ref{prop:amalg2}.
\end{proof}

\begin{corollary} \label{cor:A-mono}
All morphisms in an $\rA$-category are monomorphisms.
\end{corollary}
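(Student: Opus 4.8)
The plan is to deduce this from the dictionary between $\rA$-categories and $\rB$-categories rather than to argue directly inside $\cA$. Since $\cA$ is an $\rA$-category, Theorem~\ref{thm:AB}(b) guarantees that $\cB := \BB(\cA)$ is a $\rB$-category, so every tool from \S\ref{ss:B-prop} becomes available. The key structural input, which I would extract from the proof of Proposition~\ref{prop:B0}(b), is that for each object $X$ of $\cA$ the length-one sequence $(X)$ is an atom of $\cB$; thus every object of $\cA$ corresponds to an atom of $\cB$.

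Given this, I would take an arbitrary morphism $f \colon X \to Y$ in $\cA$ and pass to the corresponding morphism $f' \colon (Y) \to (X)$ in $\cB$ supplied by the correspondence underlying Proposition~\ref{prop:AB-epi-mono}. Because $(X)$ and $(Y)$ are atoms, $f'$ is a morphism of atoms in a $\rB$-category, and Proposition~\ref{prop:epicatom} tells us that every such morphism is an epimorphism. Finally, Proposition~\ref{prop:AB-epi-mono} translates the assertion ``$f'$ is an epimorphism'' back into ``$f$ is a monomorphism,'' which is exactly the claim. As $f$ was arbitrary, all morphisms of $\cA$ are monomorphisms.

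The only point demanding care --- and the one place where it would be easy to slip --- is the direction reversal built into Proposition~\ref{prop:AB-epi-mono}: the construction $\BB$ sends $\cA^{\op}$, not $\cA$, into the atoms, so a monomorphism in $\cA$ matches an \emph{epimorphism} in $\cB$, and conversely. Consequently one must invoke the epimorphism statement for atoms (Proposition~\ref{prop:epicatom}) and not any monomorphism statement; pairing the wrong two results would prove nothing. A purely internal argument in $\cA$ is also possible in principle --- it amounts to redoing the proof of Proposition~\ref{prop:epicatom} after dualizing, using that the only subobjects of an atom are trivial --- but routing through the established correspondence is shorter and avoids re-deriving the equalizer argument.
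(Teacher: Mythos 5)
Your proof is correct and follows exactly the paper's route: the paper's own proof of this corollary is the one-line combination of Proposition~\ref{prop:epicatom} (maps of atoms are epimorphisms) with the mono/epi reversal of Proposition~\ref{prop:AB-epi-mono}, which is precisely what you carry out. Your care about the direction reversal is well placed and matches the intended reading.
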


\begin{proof}
This follows from Propositions~\ref{prop:epicatom} and~\ref{prop:AB-epi-mono}.
\end{proof}

\begin{corollary} \label{cor:A-EI}
Any endomorphism in an $\rA$-category is an isomorphism.
\end{corollary}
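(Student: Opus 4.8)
The plan is to transport the corresponding statement from the $\rB$-category side through the $\AA$/$\BB$ dictionary, exactly as was done for Corollary~\ref{cor:A-mono}. Since $\cA$ is an $\rA$-category, Theorem~\ref{thm:AB}(b) guarantees that $\cB=\BB(\cA)$ is a $\rB$-category, and each object $X$ of $\cA$ yields an atom $(X)$ of $\cB$.

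First I would observe that an endomorphism $f \colon X \to X$ in $\cA$ corresponds, under the construction of $\BB(\cA)$, to a self-map $f' \colon (X) \to (X)$ of the atom $(X)$ in $\cB$. Then I would invoke Corollary~\ref{cor:EI-atom}, which asserts that any self-map of an atom in a $\rB$-category is an isomorphism; hence $f'$ is an isomorphism. Finally, since Proposition~\ref{prop:AB-epi-mono} says that $f$ is an isomorphism if and only if $f'$ is, I conclude that $f$ is an isomorphism.

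There is essentially no genuine obstacle here beyond correctly applying the dictionary, since all the real work was already carried out in establishing Corollary~\ref{cor:EI-atom}. Should one prefer a self-contained argument on the $\cA$-side, one could instead proceed directly: by Corollary~\ref{cor:A-mono} the map $f$ is a monomorphism, and the set $\Hom_{\cA}(X,X)$ is finite (via the identification $\Hom_{\cA}(X,X)=\Hom_{\cB}((X),(X))$ together with Proposition~\ref{prop:Hom-finite}). Precomposition by $f$ is then an injective self-map of a finite set, hence surjective, so there is $g$ with $f \circ g = \id_X$; combining this right inverse with the monicity of $f$ gives $g \circ f = \id_X$ as well, so $f$ is an isomorphism. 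I would present the shorter first argument as the proof.
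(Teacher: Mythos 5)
Your first argument is exactly the paper's proof: the paper deduces the corollary from Corollary~\ref{cor:EI-atom} together with Proposition~\ref{prop:AB-epi-mono}, just as you do. The proposal is correct and takes essentially the same approach.
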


\begin{proof}
This follows from Corollary~\ref{cor:EI-atom}  and~\ref{prop:AB-epi-mono}.
\end{proof}

\begin{remark}
A category in which all endomorphisms are isomorphisms is called an \defn{EI-category}. Thus the above corollary shows that every $\rA$-category is an EI-category. Representations of EI-categories have received some attention in the literature, e.g., \cite{GanLi}.
\end{remark}

We now discuss the condition Definition~\ref{defn:Acat}(c) in a bit more detail. The contrapositive of Definition~\ref{defn:Acat}(c) can be phrased as follows: if $f \colon X \to Y$ is a non-isomorphism then there exist distinct morphisms $g_1,g_2 \colon Y \to Z$ such that $g_1 \circ f = g_2 \circ f$. As Corollary~\ref{cor:A-mono} suggests, when working on the ``$\rA$ side,'' morphisms will in some sense be embeddings. From this perspective, Definition~\ref{defn:Acat}(c) essentially means that if $X$ is a proper subobject of $Y$ then we can find distinct embeddings of $Y$ into some auxiliary object that agree on $X$.

There is one other perspective on Definition~\ref{defn:Acat}(c) that is sometimes useful. Let $f \colon X \to Y$ be a morphism in an $\rA_1$-category. We refer to objects in the amalgamation set of $(f,f)$ as \defn{self-amalgamations} of $Y$ over $X$. There is always a trivial self-amalgamation, namely $Y$ itself, or more precisely, the pair $(\id_Y, \id_Y)$. One easily sees that $f$ is an epimorphism if and only if this is the only self-amalgamation. Thus the contrapositive of Definition~\ref{defn:Acat}(c) is equivalent to the following: if $f \colon X \to Y$ is a non-isomorphism then there is a non-trivial self-amalgamation of $Y$ over $X$.

\subsection{Products}

Let $\cA_1$ and $\cA_2$ be $\rA_1$-categories. One easily sees that the product category $\cA_1 \times \cA_2$ is also an $\rA_1$-category, and is an $\rA$-category if both $\cA_1$ and $\cA_2$ are. This motivates the following construction:

\begin{definition}
Let $\cB_1$ and $\cB_2$ be $\rB_1$-categories. We define the \defn{tensor product category} to be the $\rB_1$-category
\begin{displaymath}
\cB_1 \boxtimes \cB_2 = \BB(\AA(\cB_1) \times \AA(\cB_2)).
\end{displaymath}
If $\cB_1$ and $\cB_2$ are both $\rB$-categories then so is $\cB_1 \boxtimes \cB_2$.
\end{definition}

\begin{example}
Let $G_1$ and $G_2$ be admissible groups with stabilizer classes $\sE_1$ and $\sE_2$. One can then show
\begin{displaymath}
\bS(G_1; \sE_1) \boxtimes \bS(G_2; \sE_2) \cong \bS(G_1 \times G_2; \sE_1 \times \sE_2),
\end{displaymath}
where $\sE_1 \times \sE_2$ denotes the set of open subgroups of the product of the form $U_1 \times U_2$ with $U_i \in \sE_i$. Note that if $\sE_1$ and $\sE_2$ each contain all open subgroups then the same need not be true for $\sE_1 \times \sE_2$. Thus one is essentially forced to confront stabilizer classes when considering the tensor product construction.
\end{example}

\section{Fra\"iss\'e theory} \label{s:fraisse}

In this section, we review classical Fra\"iss\'e theory and its categorical reformulation, and then apply this theory to prove the main theorems of this paper.

\subsection{Classical Fra\"iss\'e theory} \label{ss:fraisse-class}

We now recall the classical formulation Fra\"iss\'e's theorem. While we will not apply this version of the theorem, it serves as motivation for the categorical form discussed in \S \ref{ss:cat-fraisse} that we do use. We will also use the language of relational structures in \S \ref{s:examples} to construct examples of $\rA$-categories. We refer to \cite{CameronBook} and \cite{Macpherson} for more complete discussions.

A \defn{signature} is a collection $\Sigma=\{(R_i,n_i)\}_{i \in I}$ where $R_i$ is a formal symbol and $n_i$ is a positive integer, called the \defn{arity} of $R_i$. Fix a signature $\Sigma$. A \defn{(relational) structure} for $\Sigma$ is a set $X$ equipped with for each $i \in I$ an $n_i$-ary relation $R_i$ on $X$ (i.e., a subset of $X^{n_i}$). Given a structure $X$ and a subset $Y$, there is an induced structure on $Y$; we call structures obtained in this manner \defn{substructures} of $X$. An \defn{embedding} of structures $X \to Y$ is an injective function that identifies $X$ with a substructure of $Y$.

A structure $\Omega$ is called \defn{homogeneous} if whenever $X$ and $Y$ are finite substructures and $i \colon X \to Y$ is an isomorphism of structures, there exists an automorphism $\sigma$ of $\Omega$ such that $\sigma(x)=i(x)$ for all $x \in X$. The \defn{age} of a structure $\Omega$, denoted $\age(\Omega)$, is the set of all finite structures that embed into $\Omega$. If $\Omega$ is a countable homogeneous structure then $\cC=\age(\Omega)$ has the following properties:
\begin{itemize}
\item $\cC$ is \defn{hereditary}: if $Y$ belongs to $\cC$ and $X$ is (isomorphic to) a substructure of $Y$ then $X$ belongs to $\cC$.
\item The set $\vert \cC \vert$ of isomorphism classes in $\cC$ is countable.
\item $\cC$ satisfies the amalgamation property, as defined in \S \ref{ss:amalg}; here we treat $\cC$ as a category with morphisms being embeddings.
\end{itemize}
Fra\"iss\'e's theorem is the converse statement: if $\cC$ is a class of finite structures satisfying the above three conditions then $\cC$ is the age of a countable homogeneous structure $\Omega$, which is unique up to isomorphism. A class satisfying the above conditions is called a \defn{Fra\"iss\'e class}, and the resulting homogeneous structure $\Omega$ is called the \defn{Fra\"iss\'e limit} of $\cC$.

For a class $\cC$ of structures, let $\cC_n$ denote the subclass consisting of structures with $n$ elements. Suppose $\Omega$ is a homogeneous structure and $\cC=\age(\Omega)$ has the property that $\vert \cC_n \vert$ is finite for all $n$. Then one easily sees that $G=\Aut(\Omega)$ acts oligomorphically on $\Omega$. In this way, Fra\"iss\'e limits provide a powerful mechanism for constructing oligomorphic groups.

\begin{example}
We give a few examples of Fra\"iss\'e limits.
\begin{enumerate}
\item Take the signature to be empty, so that a structure is simply a set. The class $\cC$ of all finite sets is a Fra\"iss\'e class, and the Fra\"iss\'e limit $\Omega$ is a countable infinite set. The oligomorphic group $G=\Aut(\Omega)$ is the infinite symmetric group.
\item Take the signature to consist of a single binary relation. The class $\cC$ of all finite totally ordered sets is a Fra\"iss\'e class, and the Fra\"iss\'e limit $\Omega$ is the set of rational numbers equipped with its standard total order.
\item Again, take the signature to consist of a single binary relation. Let $\cC$ be the class of all finite simple graphs. This is a Fra\"iss\'e class, and the limit is the Rado graph. \qedhere
\end{enumerate}
\end{example}

\subsection{Categorical Fra\"iss\'e theory} \label{ss:cat-fraisse}

Given a class $\cC$ of relational structures, one can regard $\cC$ as a category with morphisms being embeddings. Fra\"iss\'e's theorem is thus a statement about a certain class of categories. It turns out that the theorem actually holds for a much broader class of categories. This observation goes back to the work of Droste--G\"obel \cite{DrosteGobel1,DrosteGobel2}, and has been discussed in more recent work as well \cite{Caramello,Irwin,Kubis}. We follow the treatment in the appendix to our recent paper \cite{homoten}.

Fix a category $\cC$ in which all objects are monomorphisms; we often refer to morphisms in $\cC$ as embeddings. An \defn{ind-object} in $\cC$ is a diagram $X_1 \to X_2 \to \cdots$ in $\cC$. It is possible to consider ind-objects indexed by more general posets, but we will only need this simple version. There is a natural notion of morphism between ind-objects, and between an ordinary object and an ind-object; see \cite[\S A.2]{homoten}.

Let $\Omega$ be an ind-object of $\cC$. We say that $\Omega$ is \defn{universal} if every object of $\cC$ embeds into $\Omega$. We say that $\Omega$ is \defn{homogeneous} if every isomorphism of finite subobjects is induced by an automorphism. Precisely, this means the following. Suppose $\alpha \colon X \to \Omega$ and $\beta \colon Y \to \Omega$ are embeddings, where $X$ and $Y$ are objects of $\cC$, and that we have an isomorphism $\gamma \colon X \to Y$ in $\cC$. Then there must exist an automorphism $\sigma$ of $\Omega$ such that $\sigma \circ \alpha = \beta \circ \gamma$. We say that $\cC$ is a \defn{Fra\"iss\'e category} if it admits a universal homogeneous ind-object. We note that any two universal homogeneous ind-objects are isomorphic \cite[Proposition~A.7]{homoten}.

Fra\"iss\'e's theorem gives a characterization of Fra\"iss\'e categories. To state it, we will need the amalgamation property (AP) defined in \S \ref{ss:amalg}, as well as the following condition:
\begin{itemize}[leftmargin=1.5cm]
\item[(RCC)] \defn{Relative countable cofinality}: for any object $X$ of $\cC$ there exists a cofinal sequence of morphisms out of $X$, i.e., there is a sequence of morphisms $\{\alpha_n \colon X \to Y_n \}_{n \ge 1}$ such that if $\beta \colon X \to Y$ is any morphism then there is a morphism $\gamma \colon Y \to Y_n$ for some $n$ such that $\gamma \circ \beta = \alpha_n$.
\end{itemize}
The following is the categorical Fra\"iss\'e theorem (in one form).

\begin{theorem}[{\cite[Theorem~A.11]{homoten}}] \label{thm:fraisse0}
Suppose that $\cC$ has an initial object. Then $\cC$ is a Fra\"iss\'e category if and only if (RCC) and (AP) hold.
\end{theorem}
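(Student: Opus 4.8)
The plan is to route the entire argument through a single \emph{extension property}. Say an ind-object $\Omega$ has property (E) if for every embedding $\alpha \colon X \to \Omega$ (with $X$ an object of $\cC$) and every morphism $f \colon X \to Y$ in $\cC$, there is an embedding $\beta \colon Y \to \Omega$ with $\beta \circ f = \alpha$. My first step is to prove that $\Omega$ is universal and homogeneous if and only if it satisfies (E). That (E) implies universality is immediate: composing the unique map out of the initial object with the inclusion of the first stage gives an embedding of the initial object into $\Omega$, and extending it along the unique morphism to any $Y$ embeds $Y$ into $\Omega$. That (E) implies homogeneity is the standard back-and-forth argument: given embeddings $\alpha \colon X \to \Omega$, $\beta \colon Y \to \Omega$ and an isomorphism $\gamma \colon X \to Y$, I would build the automorphism $\sigma$ as a union of partial isomorphisms, alternately extending domain and codomain one stage at a time using (E), the key point being that each object of $\cC$ is compact, so any morphism into $\Omega$ factors through a finite stage. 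Conversely, if $\Omega$ is universal and homogeneous, then (E) follows by embedding $Y$ into $\Omega$ by universality, comparing the two resulting embeddings of $X$, and applying homogeneity with $\gamma = \id_X$ to realign them.

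With this reduction, the forward direction is quick. Assume $\Omega$ is universal and homogeneous, hence satisfies (E). For (AP), given a pre-amalgamation $b \colon A \to B$, $c \colon A \to C$, I would embed $A$ into $\Omega$ and extend along $b$ and along $c$ using (E), obtaining embeddings of $B$ and $C$ that agree on $A$; by compactness both factor through a common finite stage, and that stage with its induced maps is an amalgam. For (RCC), given $X$ I would fix an embedding $\alpha \colon X \to \Omega$ and take the $\alpha_n \colon X \to X_n$ to be its factorizations through the stages $X_n$; cofinality of this sequence is precisely homogeneity, since any $\beta \colon X \to Y$ can be realized inside $\Omega$ and then transported onto $\alpha$ by an automorphism, after which it factors through some stage.

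The substance is the backward direction: assuming (AP), (RCC), and an initial object, I would construct an ind-object $\Omega$ satisfying (E) by a Fra\"iss\'e-style construction. Starting from $X_1$ the initial object, I would build the chain $X_1 \to X_2 \to \cdots$ one step at a time, at each step resolving one \emph{task}, namely a pair consisting of a stage $X_k$ already built and a morphism out of $X_k$ that we wish to realize further up the chain. To resolve a task $f \colon X_k \to Y$ at the current top $X_m$ (with $m \ge k$), I would apply (AP) to the span $X_m \leftarrow X_k \to Y$ to obtain $X_{m+1}$ receiving compatible maps from $X_m$ and $Y$. The role of (RCC) is to keep the bookkeeping countable: rather than realizing every morphism out of each $X_k$, it suffices to realize a countable cofinal family, since any other morphism then factors through a realized one. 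A dovetailing enumeration of all such tasks over all stages ensures each is eventually resolved, and a final check that $\Omega$ satisfies (E) — reducing an arbitrary extension problem to a cofinal, hence realized, one — finishes the proof.

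The main obstacle I anticipate is organizing the bookkeeping in the backward direction so that every task is genuinely handled despite the fact that new stages, and hence new tasks, are created as the construction proceeds; this requires a careful dovetailing together with the observation that (RCC) reduces each stage to only countably many tasks. A secondary technical point, used throughout, is the compactness of objects of $\cC$ relative to ind-objects — that any morphism from an object into an ind-object factors through a finite stage — which underlies both the extraction of amalgams in the forward direction and the well-definedness of the partial isomorphisms in the back-and-forth.
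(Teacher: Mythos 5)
This statement is not proved in the paper at all: it is quoted from the appendix of \cite{homoten} (Theorem~A.11 there), so there is no in-paper argument to compare against. Your proposal is the standard proof of the categorical Fra\"iss\'e theorem, organized around the extension (injectivity) property exactly as in the cited treatment and in Kubi\'s/Droste--G\"obel, and the architecture is correct: the equivalence of universality-plus-homogeneity with (E) via back-and-forth and compactness, the easy forward direction, and the dovetailed chain construction for the converse all go through as you describe.

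One detail in the backward direction deserves an explicit sentence, since as written it is slightly under-specified. Your tasks are morphisms out of the \emph{stages} $X_k$, but (E) must be verified for an arbitrary embedding $\alpha \colon X \to \Omega$ and morphism $f \colon X \to Y$ with $X$ an arbitrary object of $\cC$. After factoring $\alpha$ through a stage as $\alpha' \colon X \to X_k$, you must first apply (AP) once more to the span $X_k \leftarrow X \to Y$, producing a morphism $X_k \to Z$ out of the stage together with $Y \to Z$ agreeing on $X$; only then does (RCC)-cofinality let you factor $X_k \to Z$ through a realized task, and a short diagram chase gives the desired $\beta \colon Y \to \Omega$ with $\beta \circ f = \alpha$. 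Your phrase ``reducing an arbitrary extension problem to a cofinal, hence realized, one'' elides this intermediate amalgamation, which is genuinely needed (cofinality alone only handles morphisms whose source is already a stage). With that step made explicit, the proof is complete.
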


\begin{example}
Here is an example where the categorical Fra\"iss\'e theorem applies while the classical one does not apply. A \defn{cubic space} is a complex vector space $V$ equipped with a linear map $\Sym^3(V) \to \bC$. There is a natural notion of embedding for cubic spaces. In \cite{homoten}, we show that the category of finite dimensional cubic spaces is a Fra\"iss\'e category; we give many other related examples as well.
\end{example}

\subsection{Fra\"iss\'e theory for A-categories}

The following is our main Fra\"iss\'e-like theorem for $\rA$-categories.

\begin{theorem} \label{thm:fraisseA-1}
Let $\cA$ be an A-category satisfying the following conditions:
\begin{itemize}
\item $\cA$ has an initial object.
\item $\cA$ satisfies the amalgamation property.
\item $\cA$ has countably many isomorphism classes.
\end{itemize}
Then there exists an admissible group $G$ and a stabilizer class $\sE$ for $G$ such that $\cA$ is equivalent to $\bT(G;\sE)^{\op}$.
\end{theorem}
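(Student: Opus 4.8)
The plan is to apply the categorical Fra\"iss\'e theorem (Theorem~\ref{thm:fraisse0}) to $\cA$, extract a universal homogeneous ind-object $\Omega$, and then reconstruct $\cA$ as $\bT(G;\sE)^{\op}$ for $G=\Aut(\Omega)$. First I would verify the hypotheses of Theorem~\ref{thm:fraisse0}. Every morphism of $\cA$ is a monomorphism by Corollary~\ref{cor:A-mono}, an initial object $e$ exists by assumption, and (AP) holds by assumption; so it remains to check relative countable cofinality (RCC). Fixing $X$, the slice $X/\cA$ has countably many isomorphism classes: there are countably many isomorphism classes of targets, and $\Hom$-sets are finite since $\BB(\cA)$ is a $\rB$-category (Theorem~\ref{thm:AB}(b)) and thus has finite $\Hom$-sets (Proposition~\ref{prop:Hom-finite}). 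Enumerating representatives $\beta_1,\beta_2,\ldots$ of these classes and amalgamating them successively via (AP) produces a chain $\alpha_n\colon X\to Y_n$ that is cofinal among all morphisms out of $X$, giving (RCC). Theorem~\ref{thm:fraisse0} then yields $\Omega=(\Omega_1\to\Omega_2\to\cdots)$.

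Next I would set $G=\Aut(\Omega)$ and topologize it by declaring the stabilizers $G_{\iota_n}$ of the structure maps $\iota_n\colon\Omega_n\to\Omega$ a neighborhood basis of the identity. This is non-archimedean by construction and Hausdorff, since an automorphism fixing every $\Omega_n$ is the identity. For an embedding $\alpha\colon X\to\Omega$ (a morphism in the ind-category), write $G_\alpha\subseteq G$ for its stabilizer; then $G_\alpha$ is open (as $\alpha$ factors through some $\Omega_n$, forcing $G_{\iota_n}\subseteq G_\alpha$), the set $\Hom(X,\Omega)$ of embeddings is non-empty by universality, and $G$ acts transitively on it by homogeneity. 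Roelcke pre-compactness is where the finiteness in the definition of $\rA$-category enters. Since every open subgroup contains some $G_{\iota_n}$, it suffices to bound double cosets of embedding-stabilizers; and $G_\alpha\backslash G/G_\beta$ is in bijection with the $G$-orbits on $\Hom(X,\Omega)\times\Hom(Y,\Omega)$, each of which is pinned down, via the initial-set property, by one element of the (finite) amalgamation set of the span $X\leftarrow e\to Y$. Hence $G$ is admissible.

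I would then let $\sE$ be the collection of open subgroups conjugate to some embedding-stabilizer $G_\alpha$, and define $F\colon\cA^{\op}\to\bT(G;\sE)$ by $F(X)=\Hom(X,\Omega)$, sending a morphism $f\colon Y\to X$ of $\cA$ (i.e.\ a morphism $X\to Y$ of $\cA^{\op}$) to $\alpha\mapsto\alpha\circ f$. Checking that $\sE$ is a stabilizer class reduces to closure under intersection: given $\alpha\colon X\to\Omega$ and $\beta\colon Y\to\Omega$, choose an amalgam $(u\colon X\to W,\,v\colon Y\to W)$ from the amalgamation set with induced $\gamma\colon W\to\Omega$ satisfying $\gamma u=\alpha$ and $\gamma v=\beta$; then $G_\alpha\cap G_\beta=G_\gamma$, where the inclusion $\subseteq$ uses that amalgamation-set elements are jointly epic (a consequence of their initiality). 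The other axioms are immediate: $G=G_\alpha$ for the embedding of $e$, conjugation-invariance is clear, and the $G_{\iota_n}$ furnish a neighborhood basis. With this $\sE$, the functor $F$ is essentially surjective by construction, and each $F(X)$ is transitive and $\sE$-smooth as observed above.

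The main obstacle is full faithfulness of $F$, which reduces to the reconstruction lemma: for embeddings $\alpha\colon X\to\Omega$ and $\beta\colon Y\to\Omega$, one has $G_\alpha\subseteq G_\beta$ if and only if $\beta=\alpha\circ f$ for some (necessarily unique, as $\alpha$ is monic) $f\colon Y\to X$. The forward direction is the crux. Using the jointly-epic amalgam $\gamma\colon W\to\Omega$ above, the hypothesis $G_\alpha\subseteq G_\beta$ gives $G_\gamma=G_\alpha\cap G_\beta=G_\alpha$, so it suffices to show $u\colon X\to W$ is an isomorphism. If it were not, then by Definition~\ref{defn:Acat}(c) it would fail to be an epimorphism, yielding distinct $g_1,g_2\colon W\to W'$ with $g_1u=g_2u$; embedding $W'$ into $\Omega$ by universality produces distinct embeddings $\gamma_1,\gamma_2\colon W\to\Omega$ with the same restriction to $X$. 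Homogeneity (applied to $\id_W$) then supplies $\sigma\in G$ fixing that common restriction but with $\sigma\gamma_1=\gamma_2\neq\gamma_1$, contradicting $G_{\gamma_1}=G_{\gamma_1 u}$ (a conjugate of $G_\gamma=G_\alpha$). Hence $u$ is an isomorphism, $f:=u^{-1}v$ works, and full faithfulness follows using transitivity of the $G$-action. Granting this lemma, $F$ is an equivalence $\cA^{\op}\simeq\bT(G;\sE)$, that is, $\cA\simeq\bT(G;\sE)^{\op}$, as desired.
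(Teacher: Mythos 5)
Your proof is correct and follows essentially the same route as the paper: apply the categorical Fra\"iss\'e theorem to obtain $\Omega$, topologize $G=\Aut(\Omega)$ via embedding-stabilizers, deduce Roelcke pre-compactness from the finiteness of amalgamation sets over the initial object, and show that $X\mapsto\Hom(X,\Omega)$ gives an equivalence onto $\bT(G;\sE)^{\op}$. The only differences are cosmetic: you verify (RCC) explicitly (the paper leaves it implicit when invoking Theorem~\ref{thm:fraisse0}), and your fullness argument replaces the paper's conservativity lemma (Lemma~\ref{lem:fraisse-5}) with a direct homogeneity argument showing the relevant amalgamation leg is an epimorphism, which amounts to the same use of the axiom that epimorphisms in an $\rA$-category are isomorphisms.
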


We will actually prove a slightly more precise statement. Let $\cA$ be any category satisfying the three conditions of Theorem~\ref{thm:fraisseA-1}. By Theorem~\ref{thm:fraisse0}, the category $\cA$ is Fra\"iss\'e, and thus admits a universal homogeneous ind-object $\Omega$. Let $G$ be its automorphism group. For an object $X$, we let $\Phi(X)$ be the set of all embeddings $X \to \Omega$; note that this is non-empty since $\Omega$ is universal. The group $G$ naturally acts on $\Phi(X)$, via its action on $\Omega$, and this action is transitive by homogeneity. Give $\alpha \in \Phi(X)$, we let $G_{\alpha}$ be the stabilizer of $\alpha$ in $G$. Let $\sE$ be the set of all subgroups of $G$ of the form $G_{\alpha}$, for some $\alpha$.

\begin{theorem} \label{thm:fraisseA-2}
Let $\cA$ be an $\rA_1$-category satisfying the three conditions of Theorem~\ref{thm:fraisseA-1}, and let $\Omega$, $G$, $\sE$, and $\Phi$ be as above.
\begin{enumerate}
\item The family $\sE$ is a neighborhood basis for a first-countable admissible topology on $G$.
\item The family $\sE$ is a stabilizer class for $G$.
\item The construction $\Phi$ defines a faithful and essentially surjective functor $\cA \to \bT(G; \sE)^{\op}$.
\item If $\cA$ is an $\rA$-category then the functor in (c) is an equivalence.
\end{enumerate}
\end{theorem}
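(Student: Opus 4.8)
The plan is to work throughout with the universal homogeneous ind-object $\Omega = (\Omega_1 \to \Omega_2 \to \cdots)$ furnished by the setup, and to translate every statement about $\bT(G;\sE)$ into one about embeddings into $\Omega$. Write $\iota_n \colon \Omega_n \to \Omega$ for the structure maps, so $\iota_n \in \Phi(\Omega_n)$; since every embedding $X \to \Omega$ factors through some $\Omega_n$ and $G_{\iota_{n+1}} \subseteq G_{\iota_n}$, the subgroups $G_{\iota_n}$ form a descending chain that I will show is a neighbourhood basis. The single most useful auxiliary fact I would isolate first is a \emph{joint-epi} property of amalgamations: if $(f \colon X \to A,\, g \colon Y \to A)$ lies in the finite amalgamation set $S$ of the pre-amalgamation $(\bzero \to X,\, \bzero \to Y)$ of the unique maps from the initial object $\bzero$ of $\cA$, then any two maps $u, v \colon A \to D$ with $uf = vf$ and $ug = vg$ coincide. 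This is immediate from the definition of initial set: $u$ and $v$ are then two $\Amalg$-morphisms from the initial object $A$ to $(uf, ug)$, hence equal.

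For (a) and (b) I would argue as follows. Given $\alpha \in \Phi(X)$ and $\beta \in \Phi(Y)$, factor both through a common $\Omega_n$ to get an object $(\alpha_n, \beta_n)$ of $\Amalg(\bzero \to X,\, \bzero \to Y)$; finiteness of $S$ gives a unique $A \in S$ and a unique $h \colon A \to \Omega_n$ through which it factors, so $\delta := \iota_n h \in \Phi(A)$ satisfies $\delta f = \alpha$, $\delta g = \beta$. Two pairs with the same type $A$ are $G$-conjugate since $\Phi(A)$ is a single orbit (homogeneity); this bounds the number of $G$-orbits on $\Phi(X) \times \Phi(Y)$ by $\vert S \vert$ and yields Roelcke pre-compactness, as these orbits are the double cosets $G_\beta \backslash G / G_\alpha$. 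The joint-epi property upgrades this to $G_\alpha \cap G_\beta = G_\delta$: if $\sigma$ fixes $\alpha$ and $\beta$ then $\sigma\delta$ and $\delta$ agree after composing with $f$ and $g$, so $\sigma\delta = \delta$. This gives closure under finite intersection; together with closure under conjugation ($G_{\sigma\alpha} = \sigma G_\alpha \sigma^{-1}$), the presence of $G$ (as $G_\alpha$ for the unique $\bzero \to \Omega$), Hausdorffness (an automorphism fixing every $\iota_n$ is the identity of the colimit), non-archimedean-ness, and first-countability (via the chain $G_{\iota_n}$), this establishes (a) and shows $\sE$ is a stabilizer class, proving (b).

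For (c), $\Phi$ sends $X$ to the transitive (homogeneity), $\sE$-smooth $G$-set $\Phi(X)$ and a morphism $u \colon X \to Y$ to the $G$-map $\beta \mapsto \beta u$, giving a functor to $\bT(G;\sE)^{\op}$. Faithfulness follows because any embedding $\beta \colon Y \to \Omega$ is left-cancellable (it factors as $\iota_m \beta_m$ with $\beta_m$ monic by Corollary~\ref{cor:A-mono}), so $\beta u = \beta u'$ forces $u = u'$. Essential surjectivity is orbit-stabilizer: every object of $\bT(G;\sE)$ is $G/G_\alpha \cong \Phi(X)$ for some $\alpha \in \Phi(X)$.

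For (d) the only remaining point is fullness, which I expect to be the main obstacle and the sole place the extra $\rA$-axiom (epimorphisms are isomorphisms) is used. The crux is the claim that for $\alpha \in \Phi(X)$ and $\beta \in \Phi(Y)$ one has $G_\beta \subseteq G_\alpha$ if and only if $\alpha$ factors through $\beta$. Granting this, a $G$-map $\psi \colon \Phi(Y) \to \Phi(X)$ is handled by choosing $\beta_0 \in \Phi(Y)$, setting $\alpha_0 = \psi(\beta_0)$, noting $G_{\beta_0} \subseteq G_{\alpha_0}$ by equivariance, and extracting $u \colon X \to Y$ with $\beta_0 u = \alpha_0$; then $\Phi(u) = \psi$ by transitivity. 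The forward direction is the heart: writing $\alpha = \delta f$, $\beta = \delta g$ with $\delta \in \Phi(A)$, $A \in S$ as above, I would show $g$ must be an isomorphism, whence $u = g^{-1} f$ works. If $g$ were not an isomorphism it would not be an epimorphism (this is precisely $\rA$-axiom (c)), so there is a non-trivial self-amalgamation $(g_1, g_2 \colon A \to B)$ over $Y$; embedding $B$ into $\Omega$ and using the joint-epi property to see $g_1 f \neq g_2 f$, one obtains two embeddings of $A$ agreeing on $Y$ but differing on $X$, and homogeneity then produces (after conjugating into place) an element of $G_\beta \setminus G_\alpha$, contradicting $G_\beta \subseteq G_\alpha$. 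Combining fullness with the faithfulness and essential surjectivity of (c) gives the equivalence.
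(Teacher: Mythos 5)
Your proposal is correct and follows essentially the same route as the paper: your ``joint-epi property'' is exactly the uniqueness clause of the paper's Lemma~\ref{lem:fraisse1-1} (initiality of the amalgamation set over $\bone$), your orbit count $\vert G\backslash(\Phi(X)\times\Phi(Y))\vert \le \vert S\vert$ is Lemma~\ref{lem:fraisse-6}, and fullness is reduced, just as in Lemmas~\ref{lem:fraisse-5} and~\ref{lem:fraisse-4}, to the key identity $G_\delta = G_\alpha \cap G_\beta$ together with the $\rA$-axiom that epimorphisms are isomorphisms. The only cosmetic difference is that where the paper first proves $\Phi$ is conservative and then applies this to the leg $Y \to Z$ of the amalgamation, you inline that step as a direct homogeneity argument manufacturing an element of $G_\beta \setminus G_\alpha$ from a witness to the failure of $g$ to be an epimorphism.
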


\begin{remark}
In \S \ref{ss:matching}, we give an example of an $\rA_1$-category (that is not an $\rA$-category) where the functor in (c) is not an equivalence.
\end{remark}

\begin{remark} \label{rmk:complete}
There is a notion of completeness for admissible groups. In Theorem~\ref{thm:fraisseA-1}, there is in fact a unique (up to isomorphism) complete group satisfying the concluding statement. The group $G$ constructed following the statement of the theorem is this complete group.
\end{remark}

We now prove the theorem, in a series of lemmas. We fix $\cA$, $\Omega$, $G$, $\sE$, and $\Phi$ as in the theorem statement in what follows. We also write $\bone$ for the initial object of $\cA$.

\begin{lemma} \label{lem:fraisse1-1}
Let $X$ and $Y$ be objects of $\cA$, and let $\alpha \colon X \to \Omega$ and $\beta \colon Y \to \Omega$ be embeddings. Then there is a unique (up to isomorphism) diagram
\begin{displaymath}
\xymatrix@C=4em@R=1em{
& Y \ar[rd]_{\delta} \ar@/^8pt/[rrd]^{\beta} \\
\bone \ar[ru] \ar[rd] && Z \ar[r]^{\epsilon} & \Omega \\
& X \ar[ru]^{\gamma} \ar@/_8pt/[rru]_{\alpha} }
\end{displaymath}
where $(Z,\gamma,\delta)$ is an amalgamation of $X$ and $Y$ over the trivial object $\bone$. We have $G_{\epsilon}=G_{\alpha} \cap G_{\beta}$.
\end{lemma}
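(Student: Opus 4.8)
The plan is to reduce everything to the universal property of the amalgamation set, together with the fact that a morphism out of a ``finite'' object of $\cA$ into the ind-object $\Omega$ factors through a finite stage. Write $\Omega$ as the ind-object $\Omega_1 \to \Omega_2 \to \cdots$. By the definition of morphisms into an ind-object \cite[\S A.2]{homoten}, the embeddings $\alpha$ and $\beta$ factor through some common stage $W = \Omega_N$; denote by $\iota \colon W \to \Omega$ the canonical embedding and write $\alpha = \iota \alpha_0$ and $\beta = \iota \beta_0$ with $\alpha_0 \colon X \to W$ and $\beta_0 \colon Y \to W$. Since $\iota$ is a monomorphism, these factorizations are unique.

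First I would record the one fact that drives the whole argument. Because $\bone$ is initial, the composites $\bone \to X \to W$ (through $\alpha_0$) and $\bone \to Y \to W$ (through $\beta_0$) agree, so $(\alpha_0, \beta_0)$ is an object of $\Amalg(\bone \to X, \bone \to Y)$. Fix an amalgamation set $S$ for this pre-amalgamation; it is finite since $\cA$ is an $\rA_1$-category. The defining property of an initial set says there is a unique $(Z, \gamma, \delta) \in S$ admitting a morphism to $(W, \alpha_0, \beta_0)$, and that this morphism $\theta \colon Z \to W$ (characterized by $\theta \gamma = \alpha_0$ and $\theta \delta = \beta_0$) is unique. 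The key consequence I will use repeatedly is a \emph{joint-determination} property: for any target $W'$, a morphism $u \colon Z \to W'$ is the unique factorization through $(Z, \gamma, \delta)$ of the cocone $(u\gamma, u\delta)$, so $u$ is determined by $u\gamma$ and $u\delta$; by factoring through finite stages and using that the structure maps of $\Omega$ are monic, the same holds for morphisms $Z \to \Omega$.

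For existence I would set $\epsilon := \iota \theta \colon Z \to \Omega$, an embedding as a composite of embeddings, and check $\epsilon\gamma = \iota\theta\gamma = \iota\alpha_0 = \alpha$ and likewise $\epsilon\delta = \beta$. For uniqueness, given a second diagram $(Z', \gamma', \delta', \epsilon')$ with $(Z', \gamma', \delta')$ an amalgamation, I would enlarge $N$ so that $\epsilon'$ also factors as $\iota\theta'$ through $W$; since $\iota$ is monic, $\theta'\gamma' = \alpha_0$ and $\theta'\delta' = \beta_0$, so $(Z', \gamma', \delta')$ also maps to $(W, \alpha_0, \beta_0)$. The uniqueness clause of the initial-set property forces $(Z', \gamma', \delta')$ to be isomorphic to $(Z, \gamma, \delta)$ by an isomorphism $\phi$ compatible with $\gamma, \delta$, and the joint-determination property gives $\epsilon'\phi = \epsilon$; thus the two diagrams are isomorphic.

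Finally, for the identity $G_\epsilon = G_\alpha \cap G_\beta$ I would argue both inclusions directly. If $g$ fixes $\epsilon$, then $g\alpha = g\epsilon\gamma = \epsilon\gamma = \alpha$ and similarly $g\beta = \beta$, giving $G_\epsilon \subseteq G_\alpha \cap G_\beta$. Conversely, if $g \in G_\alpha \cap G_\beta$, then the morphism $g\epsilon \colon Z \to \Omega$ satisfies $g\epsilon\gamma = g\alpha = \alpha = \epsilon\gamma$ and $g\epsilon\delta = \beta = \epsilon\delta$, so the joint-determination property yields $g\epsilon = \epsilon$, i.e. $g \in G_\epsilon$. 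The step I expect to be the genuine obstacle is establishing the joint-determination property cleanly in the ind-setting: one must check that comparing two maps $Z \to \Omega$ can always be carried out at a single finite stage, and that the monomorphism $\iota$ lets one descend an equality of cocones from $\Omega$ down to $W$ so that the initial-set uniqueness applies. Everything else is a direct application of that universal property.
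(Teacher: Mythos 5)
Your proposal is correct and follows essentially the same route as the paper: existence and uniqueness of the diagram come from the initial-set property of the amalgamation set (the paper compresses this to one sentence), and the identity $G_\epsilon = G_\alpha \cap G_\beta$ is proved by the same two inclusions, with the reverse inclusion resting on the fact that a map out of $Z$ is determined by its composites with $\gamma$ and $\delta$. Your explicit handling of the ind-object $\Omega$ --- factoring through a common finite stage and using monicity of the structure maps to descend equalities --- is exactly the detail the paper leaves implicit, and you have filled it in correctly.
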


\begin{proof}
The existence and uniqueness of the diagram follow from the definition of $\rA_1$-category. We have $\alpha=\epsilon \gamma$, and so for $\sigma \in G$ we have $\sigma \alpha = \sigma \epsilon \gamma$; thus $G_{\epsilon} \subset G_{\alpha}$. Of course, the same holds with $\beta$, and so $G_{\epsilon} \subset G_{\alpha} \cap G_{\beta}$. We now prove the reverse containment. Thus let $\sigma \in G_{\alpha} \cap G_{\beta}$ be given. Then the above diagram commutes with $\epsilon$ changed to $\sigma \epsilon$. By uniqueness of the above diagram, it follows that $\epsilon=\sigma \epsilon$, and so $\sigma \in G_{\epsilon}$, as required.
\end{proof}

\begin{lemma} \label{lem:fraisse-6}
Let $X$ and $Y$ be objects of $\cA$, let $E=G \backslash (\Phi(X) \times \Phi(Y))$, and let $F$ be an amalgamation set for $X$ and $Y$ over $\bone$. Then we have a natural bijection $E \cong F$; in particular, $E$ is finite.
\end{lemma}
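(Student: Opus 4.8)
The plan is to build an explicit bijection between $E$ and $F$ using Lemma~\ref{lem:fraisse1-1}, which attaches to each pair of embeddings a canonical amalgamation over $\bone$. First I would record a preliminary observation: the elements of the initial set $F$ lie in pairwise distinct isomorphism classes. Indeed, if $I_1, I_2 \in F$ were isomorphic, then applying the defining property of an initial set to the object $W = I_1$ (note $\Hom(I_1, I_1) \ni \id$ and $\Hom(I_2, I_1)$ contains an isomorphism) forces $I_1 = I_2$. Consequently, Lemma~\ref{lem:fraisse1-1} lets me define a map $\Psi \colon \Phi(X) \times \Phi(Y) \to F$: given $(\alpha, \beta)$, let $\Psi(\alpha, \beta)$ be the unique element $(Z, \gamma, \delta)$ of $F$ admitting an embedding $\epsilon \colon Z \to \Omega$ with $\epsilon\gamma = \alpha$ and $\epsilon\delta = \beta$.

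Next I would check that $\Psi$ is constant on $G$-orbits, so that it descends to a map $\overline{\Psi} \colon E \to F$. If $\sigma \in G$, then $\sigma\epsilon \colon Z \to \Omega$ is again an embedding with $(\sigma\epsilon)\gamma = \sigma\alpha$ and $(\sigma\epsilon)\delta = \sigma\beta$, so the same amalgamation $(Z, \gamma, \delta)$ witnesses $\Psi(\sigma\alpha, \sigma\beta)$; by the uniqueness built into the definition, $\Psi(\sigma\alpha, \sigma\beta) = \Psi(\alpha, \beta)$.

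For surjectivity of $\overline{\Psi}$, given $(Z, \gamma, \delta) \in F$, I would use universality of $\Omega$ to choose an embedding $\epsilon \colon Z \to \Omega$ and set $\alpha = \epsilon\gamma$ and $\beta = \epsilon\delta$; these are embeddings of $X$ and $Y$ into $\Omega$, being composites of embeddings. The resulting diagram realizes $(Z, \gamma, \delta)$ as the amalgamation of Lemma~\ref{lem:fraisse1-1} attached to $(\alpha,\beta)$, whence $\Psi(\alpha, \beta) = (Z, \gamma, \delta)$. For injectivity, suppose $\Psi(\alpha_1, \beta_1) = \Psi(\alpha_2, \beta_2) = (Z, \gamma, \delta)$, with witnessing embeddings $\epsilon_1, \epsilon_2 \colon Z \to \Omega$. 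Applying homogeneity of $\Omega$ to the two embeddings $\epsilon_1, \epsilon_2$ of $Z$ together with the identity isomorphism $\id_Z$, I obtain $\sigma \in G = \Aut(\Omega)$ with $\sigma\epsilon_1 = \epsilon_2$; then $\sigma\alpha_1 = \sigma\epsilon_1\gamma = \epsilon_2\gamma = \alpha_2$ and likewise $\sigma\beta_1 = \beta_2$, so the two pairs lie in the same $G$-orbit. Thus $\overline{\Psi}$ is a bijection, and since $F$ is finite by Definition~\ref{defn:Acat}(b), so is $E$.

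I expect the main obstacle to be injectivity, where the homogeneity of $\Omega$ does the essential work: the key point is that a single amalgamation object $Z$ governs both pairs at once, so that one automorphism of $\Omega$ suffices to match them up simultaneously (rather than matching $X$ and $Y$ separately, which would not produce a single group element). A secondary point worth stating carefully is the well-definedness of $\Psi$, namely that $F$ meets each isomorphism class of amalgamation at most once, so that the ``unique up to isomorphism'' output of Lemma~\ref{lem:fraisse1-1} genuinely selects an element of $F$.
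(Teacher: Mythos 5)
Your proof is correct and follows essentially the same route as the paper's: both use Lemma~\ref{lem:fraisse1-1} to pass between $G$-orbits of pairs of embeddings and amalgamations over $\bone$, with universality of $\Omega$ giving surjectivity and homogeneity giving injectivity (the paper phrases the latter as well-definedness of the inverse map $F \to E$). Your preliminary observation that distinct elements of $F$ are non-isomorphic, and your explicit use of homogeneity with a single automorphism matching both embeddings of $Z$ at once, are exactly the details the paper leaves to the reader.
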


\begin{proof}
Given $\alpha \in \Phi(X)$ and $\beta \in \Phi(Y)$, let $(Z,\gamma,\delta)$ be the amalgamation from Lemma~\ref{lem:fraisse1-1}. It is clear that if $(\alpha,\beta)$ is modified by an element of $G$ then the amalgamation is unchanged (up to isomorphism). This construction therefore yields a well-defined map $E \to F$. Conversely, if $(Z,\gamma,\delta)$ is any amalgamation then by choosing an embedding $\epsilon \colon Z \to \Omega$, we get the pair $(\gamma^*(\epsilon), \delta^*(\epsilon))$ in $\Phi(X) \times \Phi(Y)$, and the orbit of this pair is independent of the choice fo $\epsilon$. This provides a map $F \to E$. One readily verifies the two maps are inverse to one another. Since $F$ is finite by the definition of $\rA_1$-category, it follows that $E$ is finite.
\end{proof}

\begin{lemma} \label{lem:fraisse1-2}
The set $\sE$ is a neighborhood basis for an admissible topology on $G$, and $\sE$ is a stabilizer class for the admissible group $G$.
\end{lemma}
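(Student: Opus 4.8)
The plan is to verify the two assertions in turn: that $\sE$ is a neighborhood basis of the identity for an admissible group topology, and that this family is a stabilizer class. Both reduce to checking the relevant axioms against the two structural lemmas just proved.

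First I would establish the closure properties that make $\sE$ a neighborhood basis at the identity for a (non-archimedean) group topology. For conjugation, observe that for $\sigma \in G$ and $\alpha \in \Phi(X)$ one has $\sigma G_\alpha \sigma^{-1} = G_{\sigma\alpha}$, and since $\sigma\alpha$ is again an embedding $X \to \Omega$ this subgroup lies in $\sE$; thus $\sE$ is closed under conjugation. For intersections, given $\alpha \in \Phi(X)$ and $\beta \in \Phi(Y)$, Lemma~\ref{lem:fraisse1-1} produces an amalgamation $(Z,\gamma,\delta)$ with an embedding $\epsilon \colon Z \to \Omega$ and the identity $G_\alpha \cap G_\beta = G_\epsilon$; as $\epsilon \in \Phi(Z)$, this intersection lies in $\sE$. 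A nonempty family of subgroups closed under conjugation and finite intersection is a conjugation-stable filter basis, hence a neighborhood basis of the identity for a unique group topology on $G$, in which each member of $\sE$ is an open subgroup; this already yields non-archimedeanness.

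Next I would check the remaining admissibility axioms. For Hausdorffness it suffices to show $\bigcap_{\alpha} G_\alpha = \{1\}$: writing $\Omega$ as the colimit of $X_1 \to X_2 \to \cdots$ with structural embeddings $\iota_n \colon X_n \to \Omega$, an element fixing every $\iota_n$ is the identity on each finite subobject and so equals $\id_\Omega$; since the $G_{\iota_n}$ form a subfamily of $\sE$, the full intersection is also trivial. For Roelcke pre-compactness, any two members of $\sE$ have the form $V = G_\alpha$ and $U = G_\beta$ with $\alpha \in \Phi(X)$ and $\beta \in \Phi(Y)$; since $\Phi(X) \cong G/G_\alpha$ and $\Phi(Y) \cong G/G_\beta$ as $G$-sets (transitivity holding by homogeneity), the double coset space $V \backslash G / U$ is in bijection with the orbit set $G \backslash (\Phi(X) \times \Phi(Y))$, which Lemma~\ref{lem:fraisse-6} identifies with an amalgamation set; this is finite by the $\rA_1$ axiom that pre-amalgamations have finite amalgamation sets. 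Hence the topology is admissible.

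Finally, for the stabilizer class conditions: closure under conjugation and finite intersection, the neighborhood-basis property, and openness of the members of $\sE$ are all exactly what was verified above, so it remains only to note that $\sE$ contains $G$. This follows by taking $X = \bone$: the initial object admits a unique embedding $\alpha_0 \colon \bone \to \Omega$, which every automorphism of $\Omega$ fixes, whence $G_{\alpha_0} = G \in \sE$. I expect the assembly to be routine once the two lemmas are in hand; the conceptual content — the intersection formula and the double-coset count — has already been extracted into Lemmas~\ref{lem:fraisse1-1} and~\ref{lem:fraisse-6}. The one place where care is genuinely needed is the Hausdorff argument, since it is the only step that uses the ind-object structure of $\Omega$ directly, and one must be slightly careful about how automorphisms of a formal ind-object are determined by their restrictions to the finite stages.
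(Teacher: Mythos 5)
Your proof is correct and follows essentially the same route as the paper: conjugation closure via $\sigma G_\alpha \sigma^{-1} = G_{\sigma\alpha}$, intersection closure via Lemma~\ref{lem:fraisse1-1}, Hausdorffness from the fact that an automorphism of $\Omega$ is determined by its restrictions to the finite stages, Roelcke pre-compactness by identifying $G_\alpha \backslash G / G_\beta$ with $G \backslash (\Phi(X) \times \Phi(Y))$ and invoking Lemma~\ref{lem:fraisse-6}, and $G \in \sE$ from the unique embedding of the initial object. The only differences are cosmetic elaborations (spelling out the conjugation formula, noting that the structural embeddings already suffice for the Hausdorff check).
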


\begin{proof}
If $\alpha$ is the unique embedding of the trivial object into $\Omega$ then $G_{\alpha}=G$; thus $G$ belongs to $\sE$. It is clear that $\sE$ is closed under conjugation. Lemma~\ref{lem:fraisse1-1} shows that $\sE$ is closed under finite intersections. It follows that $\sE$ is a neighborhood basis for a topology on $G$, and the $\sE$ is a stabilizer class.

It remains to show that the topological group $G$ is admissible. It is non-archimedean by construction. We now verify that it is Hausdorff. Thus suppose $\sigma$ belongs to $\bigcap_{U \in \sE} U$. Then for any embedding $\alpha \colon X \to \Omega$ we have $\sigma \alpha=\alpha$. Since a map of ind-objects is determined by its restrictions to (non-ind) objects, it follows that $\sigma$ is the identity, and so $G$ is Hausdorff.

Finally, we show $G$ is Roelcke pre-compact. It suffices to show $G_{\alpha_0} \backslash G / G_{\beta_0}$ is finite for two embeddings $\alpha_0 \colon X \to \Omega$ and $\beta_0 \colon Y \to \Omega$. This set is in bijection with $G \backslash (G/G_{\alpha_0} \times G/G_{\beta_0})$. Since $G$ acts transitively on $\Phi(X)$ with stabilizer $G_{\alpha_0}$, the set $G/G_{\alpha}$ (with its $G$-action) is identified with $\Phi(X)$; similarly, $G/G_{\beta_0}$ is identified with $\Phi(Y)$. Thus finiteness follows from Lemma~\ref{lem:fraisse-6}.
\end{proof}

We have thus proved Theorem~\ref{thm:fraisseA-2}(a,b). Now, the action of $G$ on $\Phi(X)$ is smooth, by definition of the topology on $G$. If $\alpha \colon X \to Y$ is a morphism in $\cA$ then there is an induced morphism $\alpha^* \colon \Phi(Y) \to \Phi(X)$ of $G$-sets. It follows that we have a functor
\begin{displaymath}
\Phi \colon \cA \to \bT(G)^{\op}.
\end{displaymath}
To complete the proof of the theorem, we study properties of this functor in the next sequence of lemmas.

\begin{lemma}
The functor $\Phi$ is faithful.
\end{lemma}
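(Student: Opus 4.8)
The plan is to unwind the definition of $\Phi$ on morphisms and reduce faithfulness to Corollary~\ref{cor:A-mono}, which says that every morphism in an $\rA$-category is a monomorphism. Recall that for a morphism $\alpha \colon X \to Y$ of $\cA$, the functor $\Phi$ produces the map of $G$-sets $\alpha^* \colon \Phi(Y) \to \Phi(X)$ sending an embedding $\beta \colon Y \to \Omega$ to the composite $\beta \circ \alpha \colon X \to \Omega$. Thus, to prove $\Phi$ is faithful I must show: if $\alpha, \alpha' \colon X \to Y$ are morphisms in $\cA$ with $\alpha^* = (\alpha')^*$, equivalently with $\beta \circ \alpha = \beta \circ \alpha'$ for every embedding $\beta \colon Y \to \Omega$, then $\alpha = \alpha'$.

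First I would invoke universality of $\Omega$ to guarantee that $\Phi(Y)$ is non-empty, and fix a single embedding $\beta \colon Y \to \Omega$; by hypothesis $\beta \circ \alpha = \beta \circ \alpha'$ as maps $X \to \Omega$. The only point needing care is that $\beta$ is a morphism into the ind-object $\Omega$, so I cannot directly say that $\beta$ is monic in $\cA$. To handle this, I would pass to a finite stage. Writing $\Omega$ as the sequential colimit of a diagram $\Omega_1 \to \Omega_2 \to \cdots$, the embedding $\beta$ is represented by a morphism $\beta_n \colon Y \to \Omega_n$ of $\cA$ for some $n$, followed by the canonical map $\iota_n \colon \Omega_n \to \Omega$. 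Since two morphisms into a sequential colimit agree exactly when they agree after composition with some transition map, the equality $\iota_n \circ \beta_n \circ \alpha = \iota_n \circ \beta_n \circ \alpha'$ produces an index $m \ge n$ with $\iota_{nm} \circ \beta_n \circ \alpha = \iota_{nm} \circ \beta_n \circ \alpha'$, where $\iota_{nm} \colon \Omega_n \to \Omega_m$ is the structure map. Now $\iota_{nm} \circ \beta_n \colon Y \to \Omega_m$ is a genuine morphism of $\cA$, hence a monomorphism by Corollary~\ref{cor:A-mono}, and cancelling it yields $\alpha = \alpha'$.

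I do not expect any serious obstacle here: the argument is essentially a direct unwinding of definitions. The one subtlety, which is exactly where the $\rA$-category hypothesis enters, is the reduction from the ind-object $\Omega$ to a finite stage $\Omega_m$, since it is only there that the monomorphism property of morphisms in $\cA$ becomes applicable.
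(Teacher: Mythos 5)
Your proof is correct and takes essentially the same route as the paper's: fix a single embedding $\gamma \colon Y \to \Omega$ (universality), note $\gamma \circ \alpha = \gamma \circ \alpha'$, and cancel $\gamma$; indeed you are more careful than the paper, which simply asserts that $\gamma$ is a monomorphism without addressing the fact that $\Omega$ is an ind-object, whereas you justify the cancellation by descending to a finite stage $\Omega_m$. The one quibble is your citation of Corollary~\ref{cor:A-mono}, which is stated only for $\rA$-categories, while this lemma is part of Theorem~\ref{thm:fraisseA-2}(c) and is asserted in the $\rA_1$ setting; at this point one should instead appeal to the standing convention of \S\ref{ss:cat-fraisse} that all morphisms in the category being fed into the categorical Fra\"iss\'e theorem are monomorphisms.
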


\begin{proof}
Let $\alpha$ and $\beta$ be two morphisms $X \to Y$ in $\cC$ such that $\alpha^*=\beta^*$. Choose an embedding $\gamma \colon Y \to \Omega$, which is possible since $\Omega$ is universal. By assumption, we have $\gamma \circ \alpha=\gamma \circ \beta$. Since $\gamma$ is a monomorphism, it follows that $\alpha=\beta$. Thus $\Phi$ is faithful.
\end{proof}

\begin{lemma}
The essential image of $\Phi$ is $\bT(G; \sE)$.
\end{lemma}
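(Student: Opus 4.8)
\emph{Proof plan.}
The plan is to compute the essential image by establishing two inclusions separately: first that every object $\Phi(X)$ lies in $\bT(G;\sE)$, and conversely that every object of $\bT(G;\sE)$ is isomorphic to some $\Phi(X)$. Since an isomorphism in $\bT(G)^{\op}$ is the same datum as a $G$-equivariant bijection, throughout I only need to match the $G$-sets $\Phi(X)$ with the $\sE$-smooth transitive $G$-sets up to equivariant isomorphism.

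For the forward inclusion, fix an object $X$ of $\cA$. The set $\Phi(X)$ is non-empty because $\Omega$ is universal. Applying homogeneity to an embedding $\alpha\colon X\to\Omega$, a second embedding $\beta\colon X\to\Omega$, and the identity isomorphism $\id_X$, one obtains $\sigma\in G$ with $\sigma\circ\alpha=\beta$; hence $G$ acts transitively on $\Phi(X)$. The stabilizer of a point $\alpha\in\Phi(X)$ is exactly $G_\alpha$, which belongs to $\sE$ by definition and is open because $\sE$ is a neighborhood basis of the identity (Lemma~\ref{lem:fraisse1-2}). Since $\Phi(X)$ is transitive, all of its point stabilizers are conjugate to $G_\alpha$, and $\sE$ is closed under conjugation, so every stabilizer lies in $\sE$. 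Thus $\Phi(X)$ is a transitive $\sE$-smooth $G$-set, i.e.\ an object of $\bT(G;\sE)$.

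For the reverse inclusion, let $T$ be an object of $\bT(G;\sE)$. Picking a point $t\in T$ and letting $H$ be its stabilizer, transitivity of $T$ gives a $G$-equivariant isomorphism $T\cong G/H$, and $\sE$-smoothness of $T$ forces $H\in\sE$. By the very definition of $\sE$, we have $H=G_\alpha$ for some embedding $\alpha\colon X\to\Omega$ with $X$ an object of $\cA$. The forward inclusion already shows $\Phi(X)$ is transitive with point stabilizer $G_\alpha$, whence $\Phi(X)\cong G/G_\alpha=G/H\cong T$. Therefore $T$ lies in the essential image, completing the proof.

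I do not expect a serious obstacle here: the genuine content — transitivity of $\Phi(X)$ (homogeneity), openness of $G_\alpha$, and the defining description of $\sE$ — has all been established in the preceding lemmas, so the argument reduces to bookkeeping about point stabilizers of transitive smooth $G$-sets. The only point needing mild care is the standard fact that a transitive smooth $G$-set is determined up to isomorphism by the conjugacy class of a point stabilizer, which is precisely what makes the reverse inclusion go through.
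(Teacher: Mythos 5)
Your proof is correct and follows essentially the same route as the paper's: identify $\Phi(X)$ with $G/G_\alpha$ for any $\alpha\in\Phi(X)$ and observe that, by the definition of $\sE$, the subgroups $G_\alpha$ arising this way are exactly the members of $\sE$. The paper compresses this into two sentences, while you spell out the two inclusions and the stabilizer bookkeeping explicitly, but there is no substantive difference.
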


\begin{proof}
For an object $X$ of $\cA$, the $G$-set $\Phi(X)$ is isomorphic to $G/G_{\alpha}$, where $\alpha \in \Phi(X)$ is any element. We thus see that the essential image of $\Phi$ exactly consists of $G$-sets isomorphic to $G/U$ with $U \in \sE$, which is exactly $\bT(G; \sE)$.
\end{proof}

We have thus proved Theorem~\ref{thm:fraisseA-2}(c). We now turn our attention to Theorem~\ref{thm:fraisseA-2}(d). In what follows, we assume that $\cA$ is an $\rA$-category.

\begin{lemma} \label{lem:fraisse-5}
The functor $\Phi$ is conservative; that is, if $\alpha \colon X \to Y$ is a morphism in $\cC$ such that $\alpha^* \colon \Phi(Y) \to \Phi(X)$ is an isomorphism then $\alpha$ is an isomorphism.
\end{lemma}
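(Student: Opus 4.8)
The plan is to reduce conservativity to the defining axiom \ref{defn:Acat}(c) of an $\rA$-category, namely that every epimorphism is an isomorphism. Concretely, I would show that the hypothesis forces $\alpha$ to be an epimorphism in $\cA$, and then conclude immediately. The key observation is that ``$\alpha^*$ is an isomorphism of $G$-sets'' is stronger than what we need: monomorphism-ness of $\alpha$ is automatic (Corollary~\ref{cor:A-mono}), so the only real content to extract is the epimorphism condition, and for that I will use only the \emph{injectivity} of $\alpha^*$; its surjectivity (and the homogeneity of $\Omega$) will play no role.

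Recall that $\alpha^* \colon \Phi(Y) \to \Phi(X)$ is precomposition with $\alpha$, sending an embedding $\beta \colon Y \to \Omega$ to $\beta \circ \alpha$. To check that $\alpha$ is an epimorphism, I would take an arbitrary object $Z$ of $\cA$ together with morphisms $\gamma, \delta \colon Y \to Z$ satisfying $\gamma \circ \alpha = \delta \circ \alpha$, and aim to conclude $\gamma = \delta$. Since $\Omega$ is universal, there is an embedding $\epsilon \colon Z \to \Omega$. Then $\epsilon \circ \gamma$ and $\epsilon \circ \delta$ are embeddings of $Y$ into $\Omega$, i.e.\ elements of $\Phi(Y)$, and they have the same image under $\alpha^*$:
\[
\alpha^*(\epsilon \circ \gamma) = \epsilon \circ \gamma \circ \alpha = \epsilon \circ \delta \circ \alpha = \alpha^*(\epsilon \circ \delta).
\]
Injectivity of $\alpha^*$ gives $\epsilon \circ \gamma = \epsilon \circ \delta$, and since $\epsilon$ is a monomorphism (being an embedding into $\Omega$, exactly as used in the proof that $\Phi$ is faithful), we get $\gamma = \delta$. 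Hence $\alpha$ is an epimorphism, and Definition~\ref{defn:Acat}(c) then shows $\alpha$ is an isomorphism.

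I do not expect a genuine obstacle here; the proof is short. The one point that requires care is conceptual rather than technical: recognizing that conservativity of $\Phi$ should be proved by establishing the epimorphism property of $\alpha$ (the ``hard'' half, since monomorphisms are free in an $\rA$-category), and that this is precisely what injectivity of $\alpha^*$ delivers once we use universality of $\Omega$ to embed the test object $Z$. It is worth flagging that this is exactly the step where the $\rA$-category hypothesis is used in an essential way—an $\rA_1$-category need not satisfy axiom~(c), and indeed the remark following Theorem~\ref{thm:fraisseA-2} promises an $\rA_1$-example where $\Phi$ fails to be an equivalence.
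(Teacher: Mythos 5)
Your proof is correct and follows essentially the same route as the paper: reduce to showing $\alpha$ is an epimorphism via injectivity of $\alpha^*$, then invoke Definition~\ref{defn:Acat}(c). The only cosmetic difference is that you inline the faithfulness argument (choosing $\epsilon \colon Z \to \Omega$ and using that it is monic), whereas the paper deduces $\beta^* = \gamma^*$ and then cites the already-proved faithfulness of $\Phi$.
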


\begin{proof}
Since $\cA$ is an A-category, it is enough to show that $\alpha$ is an epimorphism. Thus suppose that $\beta$ and $\gamma$ are maps $Y \to Z$ such that $\beta \circ \alpha = \gamma \circ \alpha$. We thus have $\alpha^*\beta^*=\alpha^*\gamma^*$. Since $\alpha^*$ is an isomorphism, it follows that $\beta^*=\gamma^*$. Since $\Phi$ is faithful, we find $\beta=\gamma$, as required.
\end{proof}

\begin{lemma} \label{lem:fraisse-4}
The functor $\Phi$ is full.
\end{lemma}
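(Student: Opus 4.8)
The plan is to reduce the fullness of $\Phi$ to a single computation on orbits. A morphism $\Phi(X) \to \Phi(Y)$ in $\bT(G;\sE)^{\op}$ is the same datum as a $G$-equivariant map $f \colon \Phi(Y) \to \Phi(X)$, and we must produce $\alpha \colon X \to Y$ in $\cA$ with $\alpha^* = f$. Since $\Phi(Y)$ is a transitive $G$-set, it suffices to match $f$ at a single point: fix an embedding $\beta_0 \colon Y \to \Omega$ and put $\rho = f(\beta_0) \in \Phi(X)$. If we can find $\alpha \colon X \to Y$ in $\cA$ with $\beta_0 \circ \alpha = \rho$, then for every $\sigma \in G$ we get $\alpha^*(\sigma \beta_0) = \sigma \beta_0 \circ \alpha = \sigma \rho = \sigma f(\beta_0) = f(\sigma \beta_0)$, so $\alpha^* = f$ by transitivity. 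First I would record the constraint from equivariance: if $\sigma \in G_{\beta_0}$ then $\sigma \rho = \sigma f(\beta_0) = f(\sigma \beta_0) = f(\beta_0) = \rho$, whence $G_{\beta_0} \subseteq G_{\rho}$.

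The crux is therefore to show that the embedding $\rho \colon X \to \Omega$ factors through $\beta_0 \colon Y \to \Omega$, using only the stabilizer containment $G_{\beta_0} \subseteq G_{\rho}$. To this end I would apply Lemma~\ref{lem:fraisse1-1} to the pair of embeddings $(\rho, \beta_0)$. This yields an amalgamation $(Z, \gamma, \delta)$ of $X$ and $Y$ over $\bone$, with $\gamma \colon X \to Z$ and $\delta \colon Y \to Z$, together with an embedding $\epsilon \colon Z \to \Omega$ satisfying $\epsilon \circ \gamma = \rho$ and $\epsilon \circ \delta = \beta_0$, and with $G_{\epsilon} = G_{\rho} \cap G_{\beta_0}$. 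By the containment just noted, $G_{\epsilon} = G_{\beta_0}$.

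It then remains to see that $\delta \colon Y \to Z$ is an isomorphism in $\cA$; granting this, $\alpha := \delta^{-1} \circ \gamma \colon X \to Y$ satisfies $\beta_0 \circ \alpha = (\epsilon \circ \delta) \circ (\delta^{-1} \circ \gamma) = \epsilon \circ \gamma = \rho$, which is what we need. To prove $\delta$ is an isomorphism I would invoke conservativity of $\Phi$ (Lemma~\ref{lem:fraisse-5}) and instead check that $\delta^* \colon \Phi(Z) \to \Phi(Y)$ is an isomorphism of $G$-sets. Concretely, $\Phi(Z) \cong G/G_{\epsilon}$ and $\Phi(Y) \cong G/G_{\beta_0}$ via $\epsilon$ and $\beta_0$, and under these identifications $\delta^*$ (which sends $\epsilon \mapsto \epsilon \circ \delta = \beta_0$) is the natural $G$-equivariant projection $G/G_{\epsilon} \to G/G_{\beta_0}$; since $G_{\epsilon} = G_{\beta_0}$ this projection is a bijection, hence an isomorphism of $G$-sets.

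I expect the main obstacle to be the factoring step of the second paragraph, which is exactly where the hypotheses conspire: the amalgamation property (through Lemma~\ref{lem:fraisse1-1}) supplies the object $Z$ and the stabilizer identity $G_{\epsilon} = G_{\rho} \cap G_{\beta_0}$, while the $\rA$-category hypothesis (conservativity of $\Phi$, Lemma~\ref{lem:fraisse-5}) is what upgrades the orbit-level bijection into the sought isomorphism $\delta$ in $\cA$. The remaining verifications — that $\alpha^* = f$ and that the identifications of $\Phi(Z)$ and $\Phi(Y)$ are compatible with $\delta^*$ — are routine manipulations with stabilizers and the definition of $\Phi$.
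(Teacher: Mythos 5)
Your proposal is correct and follows essentially the same route as the paper's proof: fix $\beta_0$, apply Lemma~\ref{lem:fraisse1-1} to $(\rho,\beta_0)$, use $G_\epsilon=G_\rho\cap G_{\beta_0}=G_{\beta_0}$ together with conservativity (Lemma~\ref{lem:fraisse-5}) to see that the leg $Y\to Z$ is an isomorphism, and then read off the desired morphism $X\to Y$, with $\phi=\alpha^*$ following from transitivity of $\Phi(Y)$. The only difference is cosmetic: the paper replaces $Z$ by $Y$ ``up to isomorphism'' where you explicitly compose with $\delta^{-1}$, and your bookkeeping of which leg of the amalgamation induces the isomorphism is in fact slightly more careful than the paper's.
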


\begin{proof}
Let $X$ and $Y$ be objects of $\cC$, and let $\phi \colon \Phi(Y) \to \Phi(X)$ be a map of $G$-sets. Choose an element $\beta \in \Phi(Y)$, and let $\alpha=\phi(\beta)$. Note that since $\phi$ is $G$-equivariant, we have $G_{\beta} \subset G_{\alpha}$. Let $(Z,\gamma,\delta)$ be an amalgamation of $X$ and $Y$ over $\bone$, and let $\epsilon \colon Z \to \Omega$ be an embedding, as in Lemma~\ref{lem:fraisse1-1}. We have $G_{\epsilon}=G_{\alpha} \cap G_{\beta}=G_{\beta}$. Thus $\gamma^* \colon \Phi(Z) \to \Phi(Y)$ is an isomorphism of $G$-sets; indeed, it is a $G$-equivariant map of transitive $G$-sets mapping $\epsilon$ to $\beta$, and $\epsilon$ and $\beta$ have the same stabilizer in $G$. By the Lemma~\ref{lem:fraisse-5}, it follows that $\gamma$ is an isomorphism.

Since the diagram in Lemma~\ref{lem:fraisse1-1} is only defined up to isomorphism, we may as well suppose that $Z=Y$, $\gamma=\id_Y$, and $\beta=\epsilon$. We thus see that $\delta^* \colon \Phi(Y) \to \Phi(X)$ is a map of $G$-sets carrying $\beta$ to $\alpha$. Since $\Phi(Y)$ is transitive, it follows that $\phi=\delta^*$, which completes the proof.
\end{proof}

\subsection{Fra\"iss\'e theory for B-categories}

The following is our main theorem on $\rB$-categories in the countable case.

\begin{theorem} \label{thm:fraisseB}
Let $\cB$ be a $\rB$-category that is non-degenerate and has countably many isomorphism classes. Then there is a first-countable admissible group $G$ and a stabilizer class $\sE$ such that $\cB$ is equivalent to $\bS(G;\sE)$. Moreover, if equivalence relations in $\cB$ are effective (i.e., $\cB$ is pre-Galois) then $\cB$ is equivalent to $\bS(G)$.
\end{theorem}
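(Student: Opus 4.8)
The plan is to reduce everything to the already-established Fra\"iss\'e theory for $\rA$-categories via the $\AA$/$\BB$ correspondence of \S\ref{ss:AA}. First I would set $\cA = \AA(\cB)$, the opposite category of atoms of $\cB$. By Theorem~\ref{thm:AB}(c), since $\cB$ is a non-degenerate $\rB$-category, $\cA$ is an $\rA$-category possessing an initial object and satisfying the amalgamation property. The isomorphism classes of atoms of $\cB$ form a subset of the isomorphism classes of $\cB$, so $\cA$ inherits the countability hypothesis. Thus $\cA$ satisfies all three hypotheses of Theorem~\ref{thm:fraisseA-1}.

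Next I would invoke Theorem~\ref{thm:fraisseA-2}: applied to $\cA$ it produces a first-countable admissible group $G$ and a stabilizer class $\sE$, together with an equivalence $\cA \simeq \bT(G;\sE)^{\op}$ (part (d) giving fullness because $\cA$ is a genuine $\rA$-category). To transport this conclusion back to $\cB$ I would use three facts: the reconstruction equivalence $\cB \simeq \BB(\AA(\cB)) = \BB(\cA)$; that $\BB$ sends equivalent categories to equivalent categories, since it is built functorially from finite sequences of objects and their morphisms; and that $\BB(\bT(G;\sE)^{\op}) \simeq \bS(G;\sE)$, which is the same reconstruction equivalence applied to the $\rB$-category $\bS(G;\sE)$, whose atoms are exactly the transitive $\sE$-smooth $G$-sets so that $\AA(\bS(G;\sE)) = \bT(G;\sE)^{\op}$. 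Chaining these gives $\cB \simeq \BB(\cA) \simeq \BB(\bT(G;\sE)^{\op}) \simeq \bS(G;\sE)$, which proves the first assertion.

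For the ``moreover'' clause, suppose in addition that equivalence relations in $\cB$ are effective. Effectivity of equivalence relations (Definition~\ref{defn:eff-eq}) is expressed purely through finite limits, co-equalizers, and kernel pairs, hence is invariant under equivalence of categories; therefore $\bS(G;\sE)$ also has effective equivalence relations. Now I would apply Proposition~\ref{prop:ad-eff-eq}(b), which says precisely that in this situation $\sE$ must contain every open subgroup of $G$, i.e.\ $\bS(G;\sE) = \bS(G)$. Composing with the equivalence from the first part yields $\cB \simeq \bS(G)$, as desired.

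Essentially all the real work is already packaged in the Fra\"iss\'e-theoretic Theorem~\ref{thm:fraisseA-2} and in Proposition~\ref{prop:ad-eff-eq}(b); within the present proof the assembly is formal. The one point I would be careful about is the functoriality of $\BB$ under equivalences, so that the equivalence manufactured on the level of atoms propagates to the full $\rB$-categories: one must check that an equivalence $\cA \simeq \cA'$ induces an equivalence $\BB(\cA) \simeq \BB(\cA')$ compatibly with co-products, which follows directly from the explicit description of the objects and morphisms of $\BB(-)$.
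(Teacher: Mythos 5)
Your proposal is correct and follows essentially the same route as the paper: pass to $\cA=\AA(\cB)$, apply Theorem~\ref{thm:AB} and the Fra\"iss\'e theorem for $\rA$-categories to get $\cA\simeq\bT(G;\sE)^{\op}$, transport back via $\BB$, and deduce the ``moreover'' clause from Proposition~\ref{prop:ad-eff-eq}(b). You spell out the invariance of effectivity under equivalence and the functoriality of $\BB$ a bit more explicitly than the paper does, but the argument is the same.
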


\begin{proof}
Let $\cA=\AA(\cB)$. By Theorem~\ref{thm:AB}, this is an A-category satisfying the three conditions of Theorem~\ref{thm:fraisseA-1}. Thus by that theorem, we have $\cA \cong \bT(G; \sE)$ for some first-countable admissible group $G$ and stabilizer class $\sE$. We have equivalences $\cB=\BB(\cA)$ and $\bS(G;\sE)=\BB(\bT(G;\sE)^{\op})$, and so we obtain an equivalence $\cB \cong \bS(G;\sE)$. The second statement follows from Proposition~\ref{prop:ad-eff-eq}.
\end{proof}

\subsection{The uncountable case}

We now explain how to remove the countability hypothesis in Theorem~\ref{thm:fraisseB}. Although this works in the non-degenerate case, we simply handle the pre-Galois case for simplicity. The following is the main result we are after:

\begin{theorem} \label{thm:uncountable}
Let $\cB$ be any pre-Galois category. Then there is an admissible group $G$ such that $\cB$ is equivalent to $\bS(G)$.
\end{theorem}

We require a few lemmas before proving the theorem. Let $\cB$ and $\cB'$ be pre-Galois categories. An \defn{embedding} $\Phi \colon \cB \to \cB'$ is a functor that commutes with finite limits and co-products, and maps atoms to atoms.

\begin{lemma} \label{lem:uncountable-1}
Let $\Phi \colon \cB \to \cB'$ be an embedding of pre-Galois categories.
\begin{enumerate}
\item $\Phi$ is fully faithful.
\item The essential image of $\Phi$ is closed under taking quotients, i.e., if $Y'$ is an object of $\cB'$ for which there is an epimorphism $\Phi(X) \to Y'$ for some $X \in \cB$ then $Y'$ is in the essential image of $\Phi$.
\item Suppose that for any $Y' \in \cB'$ there is $X \in \cB$ and an epimorphism $\Phi(X) \to Y'$. Then $\Phi$ is an equivalence.
\end{enumerate}
\end{lemma}

\begin{proof}
(a) We first claim that $\Phi$ is conservative, i.e., if $f$ is a morphism in $\cB$ such that $\Phi(f)$ is an isomorphism then $f$ is an isomorphism. Since $\Phi$ maps atoms to atoms and commutes with finite co-products, we have a natural isomorphism $X^{\orb} \cong \Phi(X)^{\orb}$, where $(-)^{\orb}$ is the orbits functor (see \S \ref{ss:orb}). Now, let $f \colon X \to Y$ be a morphism in $\cB$ such that $\Phi(f)$ is an isomorphism. Let $\Delta_f \colon X \to X \times_Y X$ be the diagonal map. Since $\Phi$ commutes with finite limits, we have $\Phi(\Delta_f)=\Delta_{\Phi(f)}$. Since $\Phi(f)$ is an isomorphism, $\Phi(f)^{\orb}$ is surjective and $\Delta_{\Phi(f)}^{\orb}$ is bijective by Proposition~\ref{prop:orb-mono}. Since $f^{\orb} = \Phi(f)^{\orb}$ and $\Delta_f^{\orb} = \Phi(\Delta_f)^{\orb}=\Delta_{\Phi(f)}^{\orb}$, we see that $f^{\orb}$ is surjective and $\Delta_f^{\orb}$ is bijective. Thus $f$ is a monomorphism and epimorphism by Proposition~\ref{prop:orb-mono}, and therefore an isomorphism by Corollary~\ref{cor:balanced}. This proves the claim.

Now, for $X \in \cB$, subobjects of $X$ correspond to subsets of $X^{\orb}$ (Proposition~\ref{cor:Bsub}). Since $\Phi$ preserves orbits, it follows that $\Phi$ induces a bijection between subobjects of $X$ and subobjects of $\Phi(X)$. Since a morphism $X \to Y$ is determined by its graph, which is a subobject of $X \times Y$, it follows that $\Phi$ is faithful.

We now show that $\Phi$ is full. Thus let $X$ and $Y$ be objects of $\cB$, and let $f' \colon \Phi(X) \to \Phi(Y)$ be a morphism in $\cB'$. Let $\Gamma' \subset \Phi(X \times Y)$ be the graph of $f'$. By what we said above, $\Gamma'$ has the form $\Phi(\Gamma)$ for a unique subobject $\Gamma$ of $X \times Y$. Let $p_1 \colon \Gamma \to X$ be the projection map. Since $\Gamma'$ is a graph, we see that $\Phi(p_1)$ is an isomorphism. Thus $p_1$ is an isomorphism since $\Psi$ is conservative. It follows that $\Gamma$ is the graph of a morphism $f$, and clearly $\Phi(f)=f'$. Thus $\Phi$ is full.

(b) Let $f' \colon \Phi(X) \to Y'$ be an epimorphism. Let $R' = \Eq(f')$, which is a subobject of $\Phi(X \times X)$. As discussed above, $R'=\Phi(R)$ for a unique $R \subset X \times X$. Let $f \colon X \to Y$ be the quotient of $X$ by $R$, i.e., the co-equalizer of $R \rightrightarrows X$. Since $\cB$ is pre-Galois, we have $R=\Eq(f)$. Since $\Phi$ commutes with finite limits, we have $\Eq(f)=\Eq(\Phi(f))$. Thus $f$ and $f'$ are isomorphic arrows by Proposition~\ref{prop:non-degen-factor}, and in particular, $Y' \cong \Phi(Y)$.

(c) The hypothesis, together with (b), implies that $\Phi$ is essentially surjective. Since $\Phi$ is fully faithful by (a), it is an equivalence.
\end{proof}

\begin{remark}
One can use the lemma to show that any embedding commutes with finite co-limits.
\end{remark}

In what follows, for a group $G$ we write $\bS'(G)$ for the category of all sets equipped with a $G$-action.

\begin{lemma} \label{lem:uncountable-2}
Let $\cB$ be a pre-Galois category, let $G$ be a group, and let $\Phi \colon \cB \to \bS'(G)$ is a functor that commutes with finite limits and co-products, and maps atoms to transitive $G$-sets. Then there is a quotient $\overline{G}$ of $G$ and an admissible topology on $\overline{G}$ such that $\Phi$ induces an equivalence $\cB \to \bS(\overline{G})$.
\end{lemma}

\begin{proof}
Let $N$ be the set of elements of $G$ that act trivially on $\Phi(X)$ for all $X$. This is clearly a normal subgroup of $G$, and we put $\overline{G}=G/N$. The action of $G$ on $\Phi(X)$ factors through $\overline{G}$, and so $\Phi$ can be regarded as valued in $\bS'(\overline{G})$. For notational simplicity, we replace $G$ with $\overline{G}$ in what follows, and thereby reduce to the case $N=1$.

Let $\cU$ be the set of all subgroups of $G$ that occur as stabilizers on $\Phi(X)$, for some $X \in \cB$. The set $\cU$ is closed under finite intersections, since if $x \in \Phi(X)$ and $y \in \Phi(Y)$ then $G_x \cap G_y = G_{(x,y)}$ where $(x,y) \in \Phi(X) \times \Phi(Y) \cong \Phi(X \times Y)$. It therefore forms a neighborhood basis of the origin for a topology on $G$. This topology is non-archimedean by definition, and Hausdorff due to the reduction made in the previous paragraph.

We claim that $G$ is Roelcke pre-compact. Let $U$ and $V$ be given open subgroups of $G$. Then there exist $x \in \Phi(X)$ and $y \in \Phi(Y)$ such that $G_x \subset U$ and $G_y \subset V$. Since $U \backslash G /V$ is a quotient of $G_x \backslash G/G_y$, it suffices to show that the latter is finite. We have
\begin{displaymath}
G_x \backslash G/G_y  \cong G \backslash (G/G_x \times G/G_y) \subset G \backslash (\Phi(X) \times \Phi(Y)) \cong G \backslash \Phi(X \times Y).
\end{displaymath}
Since $\Phi$ commutes with finite co-products and takes atoms to transitive sets, it follows that $\Phi(Z)$ is finitary for any $Z$, and so the above set is finite. This verifies the claim. We have thus shown that $G$ is admissible.

Now, for any $X \in \cB$, the action of $G$ on $\Phi(X)$ is smooth by definition of the topology on $G$. It is also finitary, as noted above. Thus $\Phi$ naturally maps into the category $\bS(G)$. The induced functor $\Phi' \colon \cB \to \bS(G)$ is clearly an embedding of pre-Galois categories. It is an equivalence by Lemma~\ref{lem:uncountable-2}. Indeed, if $U$ is any open subgroup of $G$ then there exists $x \in \Phi(X)$, for some atom $X$, such that $G_x \subset U$, and so $G/U$ is a quotient of $G/G_x \cong \Phi(X)$. This shows that every transitive $G$-set is a quotient of some $\Phi(X)$, from which it follows that every object of $\bS(G)$ is such a quotient.
\end{proof}

\begin{lemma} \label{lem:uncountable-3}
Let $\{\cB_i\}_{i \in I}$ be a directed system of pre-Galois categories, where the transition maps are embeddings, and let $\cB$ be the 2-colimit. Suppose that for each $i$ there is an admissible group $G_i$ such that $\cB_i \cong \bS(G_i)$. Then there is an admissible group $G$ such that $\cB \cong \bS(G)$.
\end{lemma}

\begin{proof}
We may as well suppose $\cB_i=\bS(G_i)$ for all $i$, which we do for simplicity. For $i \le j$, we let $\Psi_{i,j} \colon \cB_i \to \cB_j$ be the transition functor. We note that $\cB$ is a pre-Galois category; this is easy to see directly from the definitions.

Say that a subset $J$ of $I$ is \defn{big} if there is some $i \in I$ such that $J$ contains every $j$ with $j \ge i$. The big subsets of $I$ form a filter, and so there is some ultrafilter $\cF$ on $I$ that contains the big sets. Let $G^*$ be the ultraproduct of the $G_i$'s with respect to $\cF$.

We define a functor $\Phi \colon \cB \to \bS'(G^*)$. To this end, we first define a functor $\Phi_i \colon \cB_i \to \bS'(G^*)$. Let $X$ be a $G_i$-set. Define $\Phi_i(X)$ to be the ultraproduct of the sets $\{ \Psi_{i,j}(X) \}_{j \in I}$, where here we let $\Psi_{i,j}(X)$ be a one-point set if $i \nlet j$. Since the $j$th set in the ultraproduct carries an action of $G_j$, it follows that $\Phi_i(X)$ carries an action of $G^*$. It is clear that the construction is functorial. Now suppose $i \le j$, and let $X$ be a $G_i$-set. Then the factors in the ultraproducts defining $\Phi_i(X)$ and $\Phi_j(\Psi_{i,j}(X))$ are naturally isomorphic on a big set (namely the indices $\ge j$), and so the ultraproducts themselves are naturally isomorphic. This shows that the $\Phi_i$'s define a functor $\Phi$ on the 2-colimit.

Ultraproducts commute with finite products and co-products, and fiber products. Thus $\Phi$ commutes with such constructions as well. If $X$ is a transitive $G_i$-set then $\Psi_{i,j}(X)$ is a transitive $G_j$-set for all $j$, and so $\Phi_i(X)$ is a transitive $G^*$-set. Thus $\Phi$ maps atoms of $\cB$ to transitive $G^*$-sets. It follows from Lemma~\ref{lem:uncountable-2} that there is a quotient $G$ of $G^*$ and an admissible topology on $G$ such that $\Phi$ induces an equivalence $\cB \cong \bS(G)$.
\end{proof}

\begin{proof}[Proof of Theorem~\ref{thm:uncountable}]
Let $\cB$ be a given pre-Galois category. For a finite set $S$ of objects of $\cB$, let $\cB_S$ be the sub pre-Galois category generated by $S$; this is the full subcategory of $\cB$ spanned by objects which occur as a subquotient of sums and products of objects in $S$. It follows from Corollary~\ref{cor:finite-quot} that $\cB_S$ contains countably many isomorphism classes. Thus $\cB_S \cong \bS(G_S)$ for some admissible group $G_S$ by Theorem~\ref{thm:fraisseB}.

Now, let $\Sigma$ be a set of objects of $\cB$ such that every object is isomorphic to some object of $\Sigma$. This exists since $\cB$ is essentially small, by assumption. Then $\cB$ is the 2-colimit of the system $\{\cB_S\}_{S \subset \Sigma}$. As the transition maps in this system are embeddings, it follows from Lemma~\ref{lem:uncountable-3} that $\cB \cong \bS(G)$ for some admissible group $G$.
\end{proof}

\section{Examples from relational structures} \label{s:examples}

We now look at some examples of $\rA$-categories and $\rB$-categories coming from classes of relational structures. See \S \ref{ss:fraisse-class} for basic definitions on relational structures.

\subsection{General comments}

Let $\cC$ be a non-empty class of finite relational structures. We assume throughout this section that $\cC$ is hereditary and that $\vert \cC_n \vert$ is finite for all $n \ge 0$. Recall that we can regard $\cC$ as a category, with morphisms being embeddings of structures.

\begin{proposition} \label{prop:A-struct}
The category $\cC$ is an $\rA_1$-category, and the following are equivalent:
\begin{enumerate}
\item $\cC$ is an $\rA$-category.
\item Given $Y \in \cC$ and a proper substructure $X \subset Y$, there exists a structure $Z \in \cC$ and distinct embeddings $Y \rightrightarrows Z$ that have equal restriction to $X$.
\item Given $Y \in \cC$ and a proper substructure $X \subset Y$, there exists a non-trivial self-amalgamation of $Y$ over $X$.
\end{enumerate}
\end{proposition}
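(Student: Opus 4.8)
The plan is to first verify that $\cC$ is an $\rA_1$-category, and then to read off the three equivalences from the definition of $\rA$-category together with the characterization of epimorphisms via self-amalgamations recorded just before the statement.

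For the $\rA_1$ structure, Definition~\ref{defn:Acat}(a) is immediate: as $\cC$ is non-empty and hereditary it contains the empty structure $\bone$, which is an initial object since there is a unique embedding $\bone \to X$ for every $X$; thus $\{\bone\}$ is a finite initial set, and essential smallness follows from the hypothesis that each $|\cC_n|$ is finite. The real content is Definition~\ref{defn:Acat}(b). Given a pre-amalgamation $(b \colon A \to B,\, c \colon A \to C)$, I would call an amalgamation $(b' \colon B \to D,\, c' \colon C \to D)$ \emph{tight} if $D = b'(B) \cup c'(C)$ as a substructure, and claim that the set of isomorphism classes of tight amalgamations forms an amalgamation set. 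To show this is an initial set of $\Amalg(b,c)$ I would check: (i) cofinality --- for any amalgamation with target $D$, the substructure $D_0 = b'(B) \cup c'(C) \subseteq D$ lies in $\cC$ by hereditariness, carries a tight amalgamation, and the inclusion $D_0 \hookrightarrow D$ is a morphism of amalgamations; (ii) a morphism from a tight amalgamation to a fixed amalgamation must agree with $b'$ and $c'$ on the union of the images and is therefore unique; and (iii) any tight amalgamation admitting such a morphism is carried isomorphically onto $D_0$, hence is unique up to isomorphism. These three facts are precisely the defining property of an initial set.

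Finiteness of the amalgamation set is the one place the standing hypotheses are essential: a tight $D$ satisfies $|D| \le |B| + |C|$, so there are finitely many possibilities for its isomorphism type (since $|\cC_n| < \infty$), and for each there are only finitely many pairs of embeddings $B \to D$, $C \to D$. I expect this step --- specifically, confirming that tightness yields an \emph{initial} set, i.e.\ verifying the uniqueness clauses (ii) and (iii) rather than mere cofinality --- to be the main obstacle; the rest is bookkeeping.

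For the equivalences, I would exploit that a morphism of $\cC$ is an isomorphism exactly when it is bijective on underlying sets, so that, up to precomposition by an isomorphism, every non-isomorphism is the inclusion of a proper substructure $X \subset Y$, and for such an inclusion $f$ the equation $g_1 f = g_2 f$ means exactly that $g_1, g_2 \colon Y \to Z$ restrict equally to $X$. With this dictionary, Definition~\ref{defn:Acat}(c) (every epimorphism is an isomorphism) applied to $\cC$ becomes verbatim the second condition of the proposition, giving its equivalence with $\cC$ being an $\rA$-category. For the equivalence with the third (self-amalgamation) condition I would use the initial amalgamation set built above: distinct $g_1, g_2$ agreeing on $X$ factor through some self-amalgamation $(g_1',g_2')$ in the amalgamation set, and $g_1 \neq g_2$ forces $g_1' \neq g_2'$, producing a non-trivial self-amalgamation; conversely every member of the amalgamation set is tight, and a tight self-amalgamation with equal legs must be the trivial pair $(\id_Y,\id_Y)$, so a non-trivial self-amalgamation automatically has distinct legs that witness the second condition. (Alternatively, this last equivalence may be quoted directly from the observation, made before the statement, that $f$ is an epimorphism iff its only self-amalgamation is trivial.)
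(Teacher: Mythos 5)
Your proposal is correct and follows essentially the same route as the paper: the ``tight'' amalgamations are exactly the paper's ``minimal'' (jointly surjective) ones, the finiteness argument via $|D|\le |B|+|C|$ and $|\cC_n|<\infty$ is identical, and the three equivalences are read off from the same dictionary (non-isomorphisms are, up to isomorphism, inclusions of proper substructures, and the epimorphism/self-amalgamation observation from \S\ref{ss:Acat}). Your extra care in verifying the uniqueness clauses making the tight amalgamations an \emph{initial} set is a welcome elaboration of a step the paper states more briefly, but it is not a different argument.
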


\begin{proof}
The class $\cC$ contains the empty structure since it is non-empty and hereditary. It is clear that the empty structure is the initial object of $\cC$, and so $\cC$ has an initial set. This verifies Definition~\ref{defn:Acat}(a).

Let $(\beta, \gamma)$ be a pre-amalgamation, where $\beta \colon A \to B$ and $\gamma \colon A \to C$. Consider an object $(\delta, \epsilon)$ of $\Amalg(\beta,\gamma)$, where $\delta \colon B \to D$ and $\epsilon \colon C \to D$. We say that $(\delta,\epsilon)$ is \defn{minimal} if $\delta$ and $\epsilon$ are jointly surjective, i.e., $D=\im(\delta) \cup \im(\epsilon)$. Every object of $\Amalg(\beta,\gamma)$ admits a unique (up to isomorphism) map from a minimal object. Indeed, in the above notation, let $D'=\im(\delta) \cup \im(\epsilon)$, regarded as a substructure of $D$. Then $D'$ is a minimal, with structure maps $\delta$ and $\epsilon$, and the inclusion $D' \to D$ is a map in $\Amalg(\beta,\gamma)$; a key point here is that $D'$ still belongs ot the class $\cC$ since $\cC$ is hereditary.

Let $S$ be a set of isomorphism class representatives for the minimal objects of $\Amalg(\beta,\gamma)$. The above argument shows that $S$ is an amalgamation set for $(\beta, \gamma)$. Since the cardinality of a minimal object is at most $\# B + \# C$ and we have assumed $\vert \cC_n \vert$ is finite for all $n$, it follows that $S$ is finite. This verifies Definition~\ref{defn:Acat}(b).

We have already explained (at the end of \S \ref{ss:Acat}) how the remaining three conditions are equivalent.
\end{proof}

Suppose that $\cC$ is indeed an $\rA$-category and that it is also satisfies the amalgamation property; then $\cC$ is a Fra\"iss\'e class. Let $\Omega$ be the Fra\"iss\'e limit, and let $G=\Aut(\Omega)$, which acts oligomorphically on $\Omega$. Theorem~\ref{thm:fraisseA-1} gives an equivalence of $\cA$ with $\bT(G;\sE)^{\op}$, where $\sE$ is the set of subgroups of $G$ of the form $G(A)$ where $A \subset \Omega$ is a finite subset. (Recall that $G(A)$ is the subgroup of $G$ fixing each element of $A$.)

\subsection{Sets} \label{ss:sets}

Let $\cC$ be the class of all finite sets (the signature in this case is empty). This is an $\rA$-category by Proposition~\ref{prop:A-struct}. The amalgamation property holds. The Fra\"iss\'e limit is the countable set $\Omega=\{1,2,\ldots\}$ and its automorphism group is the infinite symmetric group $\fS$. Let $\sE$ be the stabilizer class consisting conjugates of $\fS(n)$, for variable $n$ (see Example~\ref{ex:sym-stab-class}). Then we have an equivalence of categories $\cC \cong \bT(\fS;\sE)^{\op}$.

We can also describe the $\rA$-category $\bT(\fS)^{\op}$. Define a category $\cC'$ as follows. An object is a pair $(X,G)$ where $X$ is a finite set and $G$ is a subgroup of the symmetric group $\Perm(X)$ on $X$. A morphism $(X,G) \to (Y,H)$ is an injective function $\alpha \colon X \to Y$ such that $H$ is contained in $G$, where here we identify $\Perm(X)$ with $\Perm(\im(\alpha))$, which we in turn regard as a subgroup of $\Perm(Y)$ in the usual manner. Then $\bT(\fS)^{\op}$ is equivalent to $\cC'$.

%

\subsection{Total orders} \label{ss:tot-ord}

Let $\cC$ be the class of finite totally ordered sets (the signature consists of a single binary relation). This is an $\rA$-category by Proposition~\ref{prop:A-struct}, and the amalgamation property holds. The Fra\"iss\'e limit $\Omega$ is the set of rational numbers, with its usual order. Let $G=\Aut(\Omega)$. It turns out that every open subgroup of $G$ has the form $G(A)$ for some finite subset $A \subset \Omega$ \cite[Proposition~17.1]{repst}. We thus have an equivalence $\cC \cong \bT(G)^{\op}$. The Delannoy category studied in \cite{line} is associated to this group $G$.

\subsection{The countable matching} \label{ss:matching}

Let $\cC$ be the class of all simple graphs in which each vertex belongs to at most one edge; the signature consists of a single binary relation (the edge relation on vertices). This is a Fra\"iss\'e class. The limit $\Omega$ is a perfect matching on a countable vertex set. Its automorphism group $G$ is the wreath product $\bZ/2 \wr \fS$, where $\fS$ is the infinite symmetric group.

The category $\cC$ is an $\rA_1$-category by Proposition~\ref{prop:A-struct}, but it is \emph{not} an $\rA$-category. To see this, let $Y$ be a single edge, and let $X \subset Y$ be one of the vertices. Then any map $X \to Z$ admits at most one extension to $Y$, and so Proposition~\ref{prop:A-struct}(b) fails. Alternatively, the only self-amalgamation of $Y$ over $X$ is the trivial one, and so Proposition~\ref{prop:A-struct}(c) fails.

Theorem~\ref{thm:fraisseA-2} does produce a faithful and essentially surjective functor $\Phi \colon \cC \to \bT(G; \sE)^{\op}$, for an appropriate stabilizer class $\sE$. We can see directly that this functor is not full: indeed, the map $\Phi(Y) \to \Phi(X)$ is an isomorphism since every embedding $X \to \Omega$ extends uniquely to $Y$. The inverse map does not come from a map $Y \to X$ in $\cC$, as there are no such maps.

Let $\cC_0$ be the (non-hereditary) subclass of $\cC$ consisting of graphs in which each vertex belongs to exactly one edge. Then $\Phi$ restricts to an equivalence $\cC_0 \to \bT(G; \sE)^{\op}$.

\subsection{Permutation classes}

Let $\cP$ be the class of all finite sets equipped with a pair of total orders. Let $X$ be a structure of $\cP$. Label the elements of $X$ as $1, 2, \ldots, n$ according to the first order. We can then enumerate the elements of $X$ under the second order to get a string in the alphabet $\{1,\ldots,n\}$ in which each letter appears once. This string exactly determines the isomorphism type of $X$. We can thus view structures in $\cP$ as permutations, and thus typically use symbols like $\sigma$ for its members. The embedding order on $\cP$ is the so-called containment order on partitions.

A \defn{permutation class} is a non-empty hereditary subclass $\cC$ of $\cP$. There is an extensive literature on permutation classes; for an overview, see \cite{Vatter}. We mention one relevant result here: a theorem of Cameron \cite{CameronPerm} asserts that there are exactly five permutation classes that are Fra\"iss\'e classes.

Let $\sigma$ be a permutation of length $n$, and let $\alpha_1, \ldots, \alpha_n$ be other permutations of lengths $m_1, \ldots, m_n$. There is then a permutation $\sigma[\alpha_1, \ldots, \alpha_n]$ of length $m=m_1+\cdots+m_n$, called \defn{inflation}. We refer to \cite[\S 3.2]{Vatter} for the definition, and just give an example here:
\begin{displaymath}
231[12,321,3412] = 56\ 987\ 3412.
\end{displaymath}
We have inserted spaces into the result to make the operation more clear. The three components on the right correspond to the three permutations in the brackets. Each uses an interval of numbers, and the order of the intervals is determined by the outside permutation. A permutation class $\cC$ is \defn{substitution closed} if $\sigma[\alpha_1, \ldots, \alpha_n]$ belongs to $\cC$ whenever $\sigma,\alpha_1, \ldots, \alpha_n$ all belong to $\cC$.

\begin{proposition}
Let $\cC$ be a substitution closed permutation class containing some permutation of length $\ge 2$. Then $\cC$ is an $\rA$-category.
\end{proposition}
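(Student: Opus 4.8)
The plan is to verify condition (b) of Proposition~\ref{prop:A-struct}. That proposition already tells us $\cC$ is an $\rA_1$-category, since $\cC$ is hereditary and $\vert \cC_n \vert \le n!$ is finite for every $n$; so the entire task is to show: for every $Y \in \cC$ and every proper substructure $X \subsetneq Y$, there exist $Z \in \cC$ and two distinct embeddings $Y \rightrightarrows Z$ that restrict to the same map on $X$.

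First I would extract a two-point structure to inflate with. Because $\cC$ contains a permutation of length $\ge 2$ and is hereditary, it contains a two-element substructure, which is one of the length-two permutations $12$ or $21$; fix such a $\tau \in \cC$. Being non-empty and hereditary, $\cC$ also contains the length-one permutation $1$, which will act as a trivial inflation factor. Now given $Y$ and $X \subsetneq Y$, I would choose a point $p \in Y \setminus X$ and view $Y$ as an outer permutation $\sigma$. I then set
\[
Z = \sigma[1, \ldots, 1, \tau, 1, \ldots, 1],
\]
the inflation of $Y$ that replaces $p$ by a copy of $\tau$ and leaves every other point as a singleton. Substitution closure, applied to $\sigma = Y \in \cC$, to $\tau \in \cC$, and to the singletons $1 \in \cC$, guarantees $Z \in \cC$. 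In $Z$ the point $p$ has become a two-element block $\{a,b\}$ occupying exactly the interval (in each of the two orders) formerly held by $p$, while every other point of $Y$ persists unchanged as a singleton block.

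Finally I would define maps $f, g \colon Y \to Z$ that both fix every point of $Y \setminus \{p\}$ and send $p$ to $a$, respectively $b$. These agree on $X$ (indeed on all of $Y \setminus \{p\}$) and are distinct because $a \ne b$, so once they are shown to be embeddings, condition (b) follows and the proof is complete. The hard part, such as it is, will be checking that $f$ and $g$ really are embeddings of permutation structures: the points of $Y \setminus \{p\}$ are carried to themselves, so the only thing to verify is that the relative position of $f(p) = a$ (and of $g(p) = b$) with respect to every other point, in each of the two orders, matches the relative position of $p$ with that point in $Y$. This holds because the block $\{a,b\}$ sits in the same slot, in both orders, that $p$ previously occupied; this is where the precise definition of inflation enters, but it is routine given the block structure. (The hypothesis of a length-$\ge 2$ permutation is genuinely needed: without it every object of $\cC$ has at most one point, and condition (b) already fails for a single point over the empty substructure.)
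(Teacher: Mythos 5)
Your proof is correct and follows essentially the same route as the paper: both inflate $Y$ at a point missed by $X$ using a length-$2$ permutation (available by heredity) and singletons elsewhere, the only cosmetic difference being that you verify condition (b) of Proposition~\ref{prop:A-struct} via the two explicit embeddings while the paper phrases the same construction as a non-trivial self-amalgamation, i.e.\ condition (c).
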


\begin{proof}
Let $\tau \to \sigma$ be a non-isomorphism in $\cC$, and suppose the embedding misses $i \in \sigma$. Let $n$ be the length of $\sigma$, and consider the inflation $\sigma'=\sigma[\alpha_1, \ldots, \alpha_n]$ where $\alpha_j=1$ for $j \ne i$, and $\alpha_i$ has length length~2. Note that $\cC$ contains the permutation 1 and some permutation of length~2 since it is hereditary. One easily sees that $\sigma'$ is a non-trivial self-amalgamation of $\sigma$ over $\tau$. Thus $\cC$ is an $\rA$-category by Proposition~\ref{prop:A-struct}.
\end{proof}

\begin{example} \label{ex:separable}
A permutation is \defn{separable} if it can be built from the permutation 1 with sums and skew-sums; the empty permutation is also separable. (Given two permutations $\alpha$ and $\beta$ their sum is $12[\alpha,\beta]$ and their skew-sum is $21[\alpha,\beta]$.) Equivalently, a permutation is separable if the permutations 2413 and 3142 do not embed into it. The class $\cC$ of all separable permutations is a substitution closed permutation class. It is thus an $\rA$-category by the above proposition.

The class $\cC$ does not have the amalgamation property. To see this, regard 123 as a subpermutation of 1342 (using the first three positions) and 3124 (using the last three positions). In the class of all permutations, there is a unique amalgamation, namely 41352. This is not separable, since when the middle 3 is deleted we obtain 3142. However, $\cC$ does have the joint embedding property, which means that any two objects embed into a common third object: indeed, if $\alpha$ and $\beta$ are separable permutations then $\alpha$ and $\beta$ each embed into their sum $12[\alpha,\beta]$, which is also separable.

Let $\cB=\BB(\cC)$. Then $\cB$ is a $\rB$-category. Since $\cC$ has an initial object, the final object $\bone$ of $\cB$ is atomic. Since the amalgamation property fails for $\cC$, it follows that there are maps of atoms $X \to Z$ and $Y \to Z$ in $\cB$ such that $X \times_Z Y=\bzero$ (indeed, take $X$, $Y$, and $Z$ to be the atoms corresponding to the permutations 1342, 3124, and 123 discussed above). However, since $\cC$ has the joint embedding property, it follows that $X \times Y$ is non-empty for all atoms $X$ and $Y$ of $\cB$.
\end{example}

\end{document}